\documentclass{article}
\usepackage[left=4cm, right=4cm, top=4cm, bottom=2.25cm]{geometry}
\usepackage{amsmath,color}
\usepackage{amsfonts,amsthm}
\usepackage{amssymb,enumerate,enumitem,verbatim}
\usepackage[pdftex]{graphicx}
\usepackage{hyperref}
\usepackage{amsmath,setspace,scalefnt}
\usepackage[usenames,dvipsnames,svgnames,table]{xcolor}
\usepackage{amsfonts,amsthm}
\usepackage{amssymb,enumitem,verbatim}
\usepackage{graphicx}
\usepackage{tikz}

\newcounter{capitalcounter}

\newcounter{claimcounter}

\newcounter{myfootnote}[page]

\newtheorem{lemma}{Lemma}[section]
\newtheorem{corollary}[lemma]{Corollary}
\newtheorem{theorem}[lemma]{Theorem}

\newtheorem{prop}[lemma]{Proposition}

\theoremstyle{definition}
\newtheorem{defn}[lemma]{Definition}

\newtheorem{claim}{Claim}

\theoremstyle{remark}
\newtheorem*{rmk}{Remark}

\newtheorem*{example}{Example}
\newtheorem*{nte}{Note}

\global\long\def\e{\varepsilon}
\global\long\def\eps{\varepsilon}

\global\long\def\N{\mathbb{N}}

\global\long\def\P{\mathbb{P}}

\global\long\def\E{\mathbb{E}}

\global\long\def\re{\begin{rmk}}
\global\long\def\mark{\end{rmk}}
\global\long\def\ex{\begin{example}}
\global\long\def\ple{\end{example}}
\global\long\def\no{\begin{nte}}
\global\long\def\ted{\end{nte}}
\global\long\def\en{\begin{compactenum}}
\global\long\def\um{\end{compactenum}}
\global\long\def\li{\begin{compactitem}}
\global\long\def\st{\end{compactitem}}
\global\long\def\de{\begin{defn}}
\global\long\def\fn{\end{defn}}
\global\long\def\cor{\begin{corollary}}
\global\long\def\ary{\end{corollary}}
\global\long\def\lem{\begin{lemma}}
\global\long\def\ma{\end{lemma}}
\global\long\def\arr{\begin{array}}
\global\long\def\ay{\end{array}}
\global\long\def\pr{\begin{proof}}
\global\long\def\oof{\end{proof}}

\global\long\def\lg#1{\log^{[{#1}]}n}

\newif\ifproofdone
\proofdonefalse
\addtolength{\textwidth}{1in}
\addtolength{\hoffset}{-0.5in}
\addtolength{\textheight}{0.5in}
\addtolength{\voffset}{-0.7in}

\newcounter{alabel}

\renewcommand\vec[1]{\overrightarrow{#1}}
\newcommand\cev[1]{\overleftarrow{#1}}

\title{Hamiltonicity in random directed graphs is born resilient}

\author{Richard Montgomery\footnote{University of Birmingham, Birmingham, B15 2TT, UK; r.h.montgomery@bham.ac.uk}}
\date{}

%%%%%%%%%%%%%%%%%%%%%%%%%%%%%%%%%%%%%%%%%%%%%%%%%%%%%%%%%%%%%%%%%%%%%%%%%%%%%%%%%%%%%%%%%%%%%%%%%%%%%%%%%%%%%%%%%%%%%%%%%%%%%%%%%%%%%%%%%%%%%%%%%%%%%%%%%%%%%%%%%%%%%%%%%%%%%%%%%%%%%%
%%%%%%%%%%%%%%%%%%%%%%%%%%%%%%%%%%%%%%%%%%%%%%%%%%%%%%%%%%%%%%%%%%%%%%%%%%%%%%%%%%%%%%%%%%%%%%%%%%%%%%%%%%%%%%%%%%%%%%%%%%%%%%%%%%%%%%%%%%%%%%%%%%%%%%%%%%%%%%%%%%%%%%%%%%%%%%%%%%%%%%

\begin{document}

\maketitle

\ifproofdone
\else
\begin{abstract}
Let $\{D_M\}_{M\geq 0}$ be the $n$-vertex random directed graph process, where $D_0$ is the empty directed graph on $n$ vertices, and subsequent directed graphs in the sequence are obtained by the addition of a new directed edge uniformly at random. For each $\eps>0$, we show that, almost surely, any directed graph $D_M$ with minimum in- and out-degree at least 1 is not only Hamiltonian (as shown by Frieze), but remains Hamiltonian when edges are removed, as long as at most $1/2-\eps$ of both the in- and out-edges incident to each vertex are removed. We say such a directed graph is \emph{$(1/2-\eps)$-resiliently Hamiltonian}. Furthermore, for each $\eps>0$, we show that, almost surely, each directed graph $D_M$ in the sequence is not $(1/2+\eps)$-resiliently Hamiltonian.

This improves a result of Ferber, Nenadov, Noever, Peter and \v{S}kori\'{c}, who showed, for each $\eps>0$, that the binomial random directed graph $D(n,p)$ is almost surely $(1/2-\eps)$-resiliently Hamiltonian if $p=\omega(\log^8n/n)$.
\end{abstract}
\fi

%%%%%%%%%%%%%%%%%%%%%%%%%%%%%%%%%%%%%%%%%%%%%%%%%%%%%%%%%%%%%%%%%%%%%%%%%%%%%%%%%%%%%%%%%%%%%%%%%%%%%%%%%%%%%%%%%%%%%%%%%%%%%%%%%%%%%%%%%%%%%%%%%%%%%%%%%%%%%%%%%%%%%%%%%%%%%%%%%%%%%%
%%%%%%%%%%%%%%%%%%%%%%%%%%%%%%%%%%%%%%%%%%%%%%%%%%%%%%%%%%%%%%%%%%%%%%%%%%%%%%%%%%%%%%%%%%%%%%%%%%%%%%%%%%%%%%%%%%%%%%%%%%%%%%%%%%%%%%%%%%%%%%%%%%%%%%%%%%%%%%%%%%%%%%%%%%%%%%%%%%%%%%

\section{Introduction}\label{intro}
%Hamilton cycles
\ifproofdone
\else

One of the most studied properties of graphs is that of \emph{Hamiltonicity}, the property that a graph contains a cycle through every vertex, known as a \emph{Hamilton cycle}. The natural extremal function for Hamiltonicity was studied by Dirac~\cite{dirac}, whose celebrated theorem demonstrates that any graph with $n\geq 3$ vertices and minimum degree at least $n/2$ is Hamiltonian. An early question of Erd\H{o}s and R\'enyi~\cite{ER59} in the study of the binomial random graph $G(n,p)$, where edges among $n$ vertices are chosen independently at random with probability $p$, asked when such a graph is likely to be Hamiltonian. After work by P\'osa~\cite{posa76} and by Korshunov~\cite{kor76}, this was determined independently by Koml\'os and Szemer\'edi~\cite{KSham} and Bollob\'as~\cite{bollo83}, who proved that if $p=(\log n+\log\log n+\omega(1))/n$, then $G(n,p)$ is Hamiltonian with probability $1-o(1)$. We say here that $G(n,p)$ is \emph{almost surely} Hamiltonian. This is best possible, for if $p=(\log n+\log\log n-\omega(1))/n$, then $G(n,p)$ almost surely has vertices of degree 0 or 1, and as such is clearly not Hamiltonian.

%hitting time
In fact, in $G(n,p)$, Hamiltonicity is almost surely concurrent with the property that every vertex has at least two neighbours. This is most precisely shown by the following beautiful result, proved independently by Bollob\'as~\cite{bollo84} and Ajtai, Koml\'os and Szemer\'edi~\cite{AKS}. Consider the $n$-vertex random graph process $G_0,\ldots,G_{\binom{n}2}$, where $G_0$ is an empty graph on $n$ vertices and each subsequent graph in the sequence is formed by the addition of a non-edge uniformly at random. Almost surely, the first graph in the sequence with minimum degree at least 2 is Hamiltonian~\cite{AKS,bollo84}.
%Resilience
Furthermore, we can strengthen this by showing that, in almost every random graph process, every graph with minimum degree at least 2 is not only Hamiltonian, but remains so despite the removal of any set of edges, subject only to a simple condition on the edges removed. That is, it is \emph{resiliently} Hamiltonian.

The general study of resilience in random graphs, initiated by Sudakov and Vu~\cite{SV08} in 2008, has developed into an active area of research (see, for example,~\cite{restree,benres,FK08,pancy,LS12,SV08} and the survey~\cite{sudsurvey}). We study the resilience of a graph $G$ with respect to some property $\mathcal{P}$ using the following definition.
\begin{defn} A graph $G$ is \emph{$\alpha$-resilient with respect to the property $\mathcal{P}$} if, for any $H\subset G$ with $d_H(v)\leq \alpha d_G(v)$ for each $v\in V(G)$, $G-H$ has property $\mathcal{P}$.
\end{defn}

Note that Dirac's theorem is exactly that the complete graph on $n\geq 3$ vertices is $(1/2)$-resiliently Hamiltonian. A natural generalisation to random graphs is to ask how resiliently Hamiltonian a typical random graph is. This was the subject of series of results (see \cite{benres,FK08,SV08}), before, in a key breakthrough, Lee and Sudakov~\cite{LS12} showed that, if $p=\omega(\log n/n)$, then $G(n,p)$ is almost surely $(1/2-o(1))$-resiliently Hamiltonian. Here, the constant $1/2$ is best possible as such a random graph can typically be disconnected while removing only $1/2+o(1)$ of the edges around any one vertex. However, the bound on $p$ can be improved slightly, and the result made best-possible by considering the resilience of Hamiltonicity in the random graph process.
Indeed, independently, the author~\cite{mehitres} and Nenadov, Steger and Truji\'c~\cite{nen17}, showed that in almost every $n$-vertex random graph process, each Hamiltonian graph is $(1/2-o(1))$-resiliently Hamiltonian. In this paper, we prove the corresponding result for the random \emph{directed} graph process.

%Directed Hamilton cycles
A Hamilton cycle in a directed graph (digraph) is a cycle through every vertex whose edges are oriented in the same direction around the cycle. The corresponding result to Dirac's theorem was shown by Ghouila-Houri~\cite{GHdir}, who proved that every digraph on $n\geq 3$ edges with minimum in- and out-degree at least $n/2$ contains a Hamilton cycle. The binomial random digraph $D(n,p)$ has $n$ vertices and each possible edge chosen independently at random with probability $p$. For each vertex pair $u,v$, $\vec{uv}$ and $\vec{vu}$ may appear in $D(n,p)$.
 The techniques for studying Hamiltonicity in $G(n,p)$ do not immediately translate to the directed case, but an elegant general coupling argument of McDiarmid~\cite{McD83} shows that, if $p=(\log n+\log\log n+\omega(1))/n$, then $D(n,p)$ is almost surely Hamiltonian. However, the natural local impediment to Hamiltonicity in $D(n,p)$ is that every vertex must have in- and out-degree at least $1$.
This almost surely holds if $p=(\log n+\omega(1))/n$, and almost surely does not if $p=(\log n-\omega(1))/n$.

Frieze~\cite{frieze88} showed that, if $p=(\log n+\omega(1))/n$, then $D(n,p)$ is almost surely Hamiltonian, and gave a corresponding result for the random digraph process. In the $n$-vertex random digraph process $D_0,D_1,\ldots,D_{n(n-1)}$, $D_0$ is the empty digraph on $n$ vertices, and each subsequent digraph in the sequence is obtained by the addition of a new directed edge uniformly at random. Frieze~\cite{frieze88} showed that, in almost every $n$-vertex random digraph process, every digraph with minimum in- and out-degree at least $1$ is Hamiltonian.

%Directed Resilience
To study resilience in directed graphs, we use the corresponding definition to resilience in graphs, as follows.

\begin{defn} A directed graph $D$ is \emph{$\alpha$-resilient with respect to the property $\mathcal{P}$} if, for any $H\subset D$ with $d^j_H(v)\leq \alpha d^j_D(v)$ for each $v\in V(D)$ and $j\in \{+,-\}$, $D-H$ has property $\mathcal{P}$.
\end{defn}

Hefetz, Steger and Sudakov~\cite{HSS16} showed that, if $p\gg \log n/\sqrt{n}$, then $D(n,p)$ is almost surely $(1/2-o(1)) $-resiliently Hamiltonian. As with the undirected case, the constant $1/2$ here is tight, but the bound on $p$ is rather loose. Ferber, Nenadov, Noever, Peter and \v{S}kori\'{c}~\cite{FNNPS17} showed that, if $p=\omega(\log^8n/n)$, then $D(n,p)$ is almost surely $(1/2-o(1))$-resiliently Hamiltonian. Here, we will make a best-possible improvement to the bound on $p$, and bring the known resilience of Hamiltonicity in random digraphs into line with that known for random graphs, as follows.

\begin{theorem}\label{directedresilience} Let $\e>0$. In almost every $n$-vertex random directed graph process $D_0,D_1,\ldots,D_{n(n-1)}$, the following is true for each $0\leq M\leq n(n-1)$. If $\delta^\pm(D_M)\geq 1$, then $D_M$ is $(1/2-\eps)$-resiliently Hamiltonian, but not $(1/2+\eps)$-resiliently Hamiltonian.
\end{theorem}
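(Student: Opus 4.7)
The theorem splits into two assertions that must hold simultaneously and uniformly in $M$: $D_M$ is $(1/2-\varepsilon)$-resiliently Hamiltonian, and $D_M$ is not $(1/2+\varepsilon)$-resiliently Hamiltonian. I would plan each direction separately.

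For the negative direction, I would exhibit a bipartite cut: for each $M$, pick a near-balanced partition $V=A\sqcup B$ and let $H$ consist of the edges of $D_M$ directed from $A$ to $B$. Then $D_M-H$ has no directed edge from $A$ to $B$, so it cannot be Hamiltonian. For each $v\in A$ we have $d^+_H(v)=|N^+_{D_M}(v)\cap B|$, with mean roughly $\tfrac12 d^+_{D_M}(v)$ over a uniformly random partition, so by standard concentration this quantity is at most $(1/2+\varepsilon)d^+_{D_M}(v)$ with plenty of room whenever $d^+_{D_M}(v)$ is not too small; symmetrically for in-degrees on the $B$-side. The nuisance is the vertices of small degree---a single out-neighbour on the wrong side of a vertex with $d^+(v)=1$ already breaks the bound---which I would handle by first placing each low-degree vertex together with (the majority of) its in- and out-neighbours, then taking a union bound over $M$ to produce a valid partition for every relevant $M$ at once.

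The positive direction is the bulk of the work. Here I would adapt the absorption method to directed graphs, in the spirit of the author's approach to the undirected random graph process, combined with a directed rotation--extension argument following Frieze~\cite{frieze88}. The first step is to isolate a list of structural properties of the process $\{D_M\}_M$ that hold almost surely uniformly in $M\ge M_0$ (the hitting time for $\delta^\pm\ge 1$): out- and in-expansion of small vertex sets, pseudo-random edge distribution on linear-sized sets, and tight structural control over the few low-degree vertices. The key robustness point is that every such property must survive, with only slightly weakened parameters, the removal of any $H$ with $d^\pm_H(v)\le(1/2-\varepsilon)d^\pm_{D_M}(v)$, so that $G:=D_M-H$ still enjoys them. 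In the second step, I would build inside $G$ a short absorbing directed path $P_0$ containing enough internal absorber gadgets that any small vertex set $W\subseteq V(G)\setminus V(P_0)$ can be spliced into $P_0$ without changing its endpoints; third, extend $P_0$ greedily using out-edges and the expansion of $G$ to a long path covering all but a small set, absorb that set to get a Hamilton path, and finally close this path into a Hamilton cycle.

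The main obstacle, and the technical heart of the argument, is the closing step in the very sparse regime $M\approx n\log n$: in digraphs a P\'osa-style rotation requires an edge of a specific orientation back into the path, so one cannot grow the set of reachable endpoints as freely as in the undirected case. This is precisely what forces the hypothesis $p\gg\log^8 n/n$ in Ferber, Nenadov, Noever, Peter and \v{S}kori\'{c}~\cite{FNNPS17}, and bringing the threshold down to the correct $\log n/n$ is likely to need a refined directed rotation scheme---using in- and out-expansion in tandem---together with a two-stage sampling argument that reserves a tiny fraction of the random edges as ``boosters'' to close the final cycle.
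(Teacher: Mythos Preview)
Your negative direction is close to the paper's and would work; the paper removes all edges between the two sides rather than just the $A\to B$ ones, but either cut suffices, and low-degree vertices are handled the same way (using that $d^+(v)+d^-(v)$ is always of order $\log n$, so at most one direction is small and $v$ can be placed deterministically to protect it).

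The positive direction has a genuine gap. Your closing mechanism rests on reserving ``a tiny fraction of the random edges as boosters'', but in a resilience statement this is unavailable: the adversary chooses $H$ \emph{after} seeing all of $D_M$, so any edges you earmark can simply be deleted, and the subgraph $D_M-H$ you must make Hamiltonian carries no leftover randomness to sprinkle. This is not a technicality---it is exactly why the sprinkling proofs of the hitting-time result do not transfer to resilience. You correctly identify directed rotation--extension as the bottleneck, but ``a refined directed rotation scheme'' is a restatement of the problem, not a solution, and the paper in fact does \emph{not} solve it by rotation at all.

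What the paper does instead is structurally different from your sketch in three places. First, connection: rather than rotation, it builds a directed version of the Glebov--Krivelevich--Johannsen extendability framework, growing alternating trees in a pair of bipartite graphs so as to connect prescribed vertex pairs by directed paths deterministically from expansion alone. Second, absorption: the reservoir consists not of single vertices but of short directed paths, each \emph{contracted} to a vertex; one finds a Hamilton cycle in the contracted digraph and then un-contracts. This is what lets the reservoir have size $\Theta(n\log\log n/\log n)$, large enough to swallow the leftover. Third, covering: the bulk of the vertices is covered by few directed paths via matchings between classes of a Local-Lemma partition, not by greedy extension. These three ingredients together replace the rotation/booster step you are missing.
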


\noindent Standard techniques easily infer from Theorem~\ref{directedresilience} that, if $p=(\log n+\omega(1))/n$, then $D(n,p)$ is almost surely $(1/2-o(1))$-resiliently Hamiltonian (see, for example, Section~\ref{secrand}).

The constant $1/2$ in Theorem~\ref{directedresilience} arises from the following. Almost surely, if $D(n,p)$ has minimum in- and out-degree at least 1, then it can be disconnected into two roughly equal halves by deleting only a little over half of the in- and out-edges at each vertex. This is easy to show when $p=\omega(\log n/n)$, and, with a little care, it is possible to show in almost every random digraph process for each digraph $D_M$ with $\delta^\pm(D_M)\geq 1$ (see Section~\ref{secnonres}). Thus, it is relatively straightforward to demonstrate the limits of the resilience of Hamiltonicity required for Theorem~\ref{directedresilience}.

On the other hand, if we remove only at most a $(1/2-\eps)$-proportion of the in- and out-edges around each vertex, then we cannot disconnect the digraph $D$. In fact, we typically must retain two key properties. Firstly, if two large equal-sized vertex sets are chosen disjointly at random, then there is likely to be a matching directed from the first into the second. Secondly, given a small collection of pairs of vertices disjoint from a random small vertex subset, we can use the vertex subset to connect the pairs into a directed cycle. The first property allows us, by taking a sequence of random sets, to cover most of a typical random digraph by relatively few directed paths. The second property then allows us to join these paths together into a directed cycle, using a reserved random small set of vertices. This may, of course, not cover all the vertices, and hence we use the \emph{absorbing method}. This is described in detail in Section~\ref{secout}, but, in short, we note that the key behind our progress compared to Ferber, Nenadov, Noever, Peter and \v{S}kori\'{c}~\cite{FNNPS17}, who used the same broad outline, is in our construction of the \emph{reservoir}. In particular, each vertex in the reservoir is created by contracting a short directed path to create a new vertex. A Hamilton cycle in this altered digraph is found, before the contractions are undone to create a Hamilton cycle in the orginal digraph. This allows the use of a larger reservoir, and in combination with an adaptation of path connection methods by Glebov, Krivelevich and Johannsen~\cite{RomanPhD} to the directed graph setting (see Section~\ref{secpaths}), and the careful division of vertex sets using the local lemma (see Section~\ref{secloc}), makes the improvements required to show Theorem~\ref{directedresilience}.

In the rest of this section, we detail our notation. In Section~\ref{secout}, we give a sketch of our proof followed by an outline of the rest of the paper.

\subsection{Notation}

A digraph $D$ has vertex set $V(D)$ and edge set $E(D)$, and we set $|D|=|V(D)|$ and $e(D)=|E(D)|$.
For any set $A\subset V(D)$, we set $N^+_D(A)=\{v\in V(D)\setminus A:\exists u\in A\text{ s.t. }\vec{uv}\in E(D)\}$ and $N^-_D(A)=\{v\in V(D)\setminus A:\exists u\in A\text{ s.t. }\vec{vu}\in E(D)\}$. We say $N^+_D(A)$ is the out-neighbourhood of $A$ and $N_D^-(A)$ is the in-neighbourhood of $A$. Where $A$ is a single vertex $v$, we let $d^j_D(v)=|N^j_D(v)|$ for each $j\in \{+,-\}$. Given a set of edges $E$, we let $V(E)$ be the set of vertices contained in these edges. For any disjoint vertex sets $A$ and $B$ in a digraph $D$, and each $j\in \{+,-\}$, $e^+_D(A,B)$ is the number of edges directed from $A$ to $B$ in $D$, and $e_D^-(A,B)=e_D^+(B,A)$. Where it is clear from context, we often drop the digraph $D$ from the subscript. For a digraph $D$, $\Delta^-(D)$, $\Delta^+(D)$, $\delta^-(D)$ and $\delta^+(D)$ are the maximum in- and out-degree and the minimum in- and out-degree of $D$ respectively. For any vertex set $A$ in a digraph $D$, the digraph $D[A]$ has vertex set $A$ and edges exactly those in $D$ contained within $A$.

For convenience, we consider paths to have an inherent order, and thus treat them as an ordered sequence of vertices. In this sequence, we allow vertices to repeat consecutively without consequence. For example, if a path $P$ has start vertex $u$ and end vertex $v$, then we consider $uPv$ to be the same path as $P$. Given a path $P$, $\cev{P}$ is the path on the same vertices as $P$ but with the vertex order reversed. An \emph{alternating path} is one whose vertices all have either in-degree 0 or out-degree 0 within the path.
In a digraph $D$, for any disjoint vertex sets $A$ and $B$, a \emph{matching from $A$ into $B$} is a set of $|A|$ independent edges oriented from $A$ into $B$.

Where we use $\pm$ in an expression, we mean that this holds with $\pm$ replaced by both $+$ and $-$.
We use $\log$ for the natural logarithm and, for each $k\geq 2$, we use $\lg{k}$ to refer to the $k$th iterated logarithm of $n$, so that, for example, $\lg{3}=\log\log\log n$.  For each integer $k$, we let $[k]=\{1,\ldots,k\}$.

We use common asymptotic notation to relate functions of $n$, as follows. If $f=O(g)$ or $g=\Omega(f)$, then there exists a constant $C$ such that $f(n)\leq Cg(n)$ for every $n\in \N$. When the implicit constant $C$ depends on $\eps$, we will denote this in the subscript, using, for example $f=O_\eps(g)$. If $f=\omega(g)$ or $g=o(f)$, for a non-zero function $g$, then $f(n)/g(n)\to\infty$ as $n\to\infty$. When, for example, $\Omega(f)$ is used in expressions, we mean that this can be replaced by some function $g=\Omega(f)$ so that the expression holds. Many of our lemmas hold for $n\geq n_0(\eps)$, for some function $n_0$ depending on $\eps$. In the proofs we do not mention this explicitly, but only note that we take `$n$ sufficiently large' where our argument requires $n$ to be large. Similarly, when $f=\omega(g)$ or $f=o(g)$ we mean that this is true for each fixed $\eps$. If $f=O(g)$ and $g=O(f)$, then we say that $f=\Theta(g)$.

For clarity of presentation we do not include floor and ceiling symbols where they are not crucial.

%%%%%%%%%%%%%%%%%%%%%%%%%%%%%%%%%%%%%%%%%%%%%%%%%%%%%%%%%%%%%%%%%%%%%%%%%%%%%%%%%%%%%%%%%%%%%%%%%%%%%%%%%%%%%%%%%%%%%%%%%%%%%%%%%%%%%%%%%%%%%%%%%%%%%%%%%%%%%%%%%%%%%%%%%%%%%%%%%%%%%%
%%%%%%%%%%%%%%%%%%%%%%%%%%%%%%%%%%%%%%%%%%%%%%%%%%%%%%%%%%%%%%%%%%%%%%%%%%%%%%%%%%%%%%%%%%%%%%%%%%%%%%%%%%%%%%%%%%%%%%%%%%%%%%%%%%%%%%%%%%%%%%%%%%%%%%%%%%%%%%%%%%%%%%%%%%%%%%%%%%%%%%

\section{Outline and proof of a key lemma}\label{secout}

\subsection{Proof sketch and outline}\label{secoutsub}
\noindent\textbf{Pseudorandom digraphs.} We will build a Hamilton cycle in any sufficiently large digraph which satisfies certain pseudorandom properties, before showing that random digraphs resiliently contain such a digraph. These properties are defined precisely in Definition~\ref{pseuddefn}, but, roughly speaking, they say the following.

\begin{itemize}
\item The minimum and maximum in- and out-degrees are bounded (see \ref{pseud0}).
\item Small sets with many incident edges expand well  (see \ref{pseud1} and \ref{pseud2}).
\item Medium-sized sets expand to more than one half of the vertex set (see \ref{pseud3}).
\end{itemize}

\noindent
In our more informal discussion, we say a set expands if its in- and out-neighbourhood is comfortably larger than the set itself. The exact parameters of the expansion we use are found in Definition~\ref{pseuddefn}.

The first two properties listed above are naturally resilient (if the minimum degree bounds are reduced by an appropriate factor). The third condition is naturally almost surely $(1/2-o(1))$-resilient in $D(n,p)$ if $p=\omega(1/n)$. Typically, here, medium-sized sets will expand to almost all of the vertex set. Then, removing at most $1/2-\eps$ of the in- and out-degrees around any vertex will only reduce the size of the in- and out-neighbourhood by at most a factor of $1/2-\eps/2$, so the third condition holds resiliently.

\medskip

\noindent\textbf{Boosting the minimum degrees.} As we consider every Hamiltonian digraph in the random digraph process, we work with digraphs with very low minimum in- or out-degree. However, we use a natural modification to increase the minimum degree when there are a small number of vertices with low in- or out-degree. After the removal of edges, we take each low degree vertex and assign it both an in- and out-neighbour, before deleting the low degree vertex and merging its assigned in-neighbour into its assigned out-neighbour (see Definition~\ref{mergedefn}). This creates the pseudorandom digraph in which we find a Hamilton cycle. Taking this cycle, undoing the merging, and putting the low degree vertices between their assigned neighbours, creates a Hamilton cycle in the original digraph.

\medskip

\noindent\textbf{Hamilton cycles in pseudorandom digraphs.} We create a Hamilton cycle in a pseudorandom digraph $D$ using the same broad outline as Ferber, Nenadov, Noever, Peter and \v{S}kori\'{c}~\cite{FNNPS17}. We use the \emph{absorbing method}, first given as a general method by R\"odl, Ruci\'{n}ski and Szemer\'{e}di~\cite{RRS08}. We find a directed path $P$ in $D$ in combination with a reservoir $R$ in $V(D)\setminus V(P)$, so that, given any subset of vertices $R'\subset R$, we can find a directed path with vertex set $V(P)\cup R'$ and the same start and end vertices as $P$. Dividing the vertices $V(D)\setminus(V(P)\cup R)$ in the digraph into equal sized sets at random, we find matchings between them to create a small number of directed paths which cover the remaining vertices. Using vertices in $R$, we then join these paths into a directed cycle with $P$ -- say the cycle is $Q$. This gives a cycle covering all the vertices except for $R\setminus V(Q)$. Using the absorbing property we then find a path with vertex set $V(P)\cup (R\setminus V(Q))$ and the same end vertices as $P$, and replace $P$ with this path in $Q$ to get a Hamilton cycle.

The improvements we make from the methods of Ferber, Nenadov, Noever, Peter and \v{S}kori\'{c}~\cite{FNNPS17} come from three areas, as follows.
\begin{itemize}
\item  We use a more efficient absorbing structure so that the reservoir may have size $\Omega(n \lg{2}/\log n)$. Our reservoir in fact consists of disjoint directed paths, not vertices. We contract these paths into vertices in the obvious manner, and use these vertices as the reservoir. We then find a Hamilton cycle in the modified digraph, before replacing each contracted vertex by its original path to get a Hamilton cycle in the original digraph.

\item To construct absorbers and join paths into a cycle we develop and use a directed graph version of some path connection techniques by Glebov, Krivelevich and Johannsen~\cite{RomanPhD}.

\item We use the local lemma to randomly partition the vertex set into subsets and find matchings between them.
\end{itemize}

The first area represents the major innovation of this paper, while the subsequent two areas require quite a few technicalities. Due to this, we structure the paper so that the most important part of the argument appears first, in the rest of this section.

\medskip

\noindent\textbf{Outline.} In the rest of this section, we define our notion of pseudorandomness precisely, before defining a \emph{good partition}. We then prove a key lemma, Lemma~\ref{goodisgood}, that says any digraph with a good partition is Hamiltonian. This allows us to give the most important part of our argument, before embarking on the more technical aspects. Finally, we cover some useful results from the literature.

In Section~\ref{secloc}, we give our use of the local lemma to find useful vertex partitions. In Section~\ref{secpaths}, we give a digraph version of techniques by Glebov, Krivelevich and Johannsen~\cite{RomanPhD} for finding connecting paths. In Section~\ref{seccover}, we divide the vertex set into subsets and find matchings between them in order to cover most of the digraph with a small number of directed paths. In Section~\ref{secfindgood}, we combine this all to show that any sufficiently large pseudorandom digraph has a good partition.
Finally, then, in Section~\ref{secrand}, we find the pseudorandom properties in a random digraph needed to prove Theorem~\ref{directedresilience}, and also show the promised limits of resilience.

\subsection{Pseudorandom digraphs and good partitions}
We will begin by defining a \emph{$(d,\eps)$-pseudorandom} digraph and a \emph{good partition} of a digraph. A good partition is defined essentially as one with the properties needed to carry through our construction of a directed Hamilton cycle. On the other hand, the properties of a pseudorandom digraph more naturally reflect those of a typical random digraph. For example, for each $\eps>0$, if $p=\omega(\log n/n)$, then $D(n,p)$ is typically $(d,\eps)$-pseudorandom with $d=pn/2\log n$.

\begin{defn}\label{pseuddefn} An $n$-vertex digraph $D$ is \emph{$(d,\e)$-pseudorandom} if the following properties hold with  $m=n\log^{[3]} n/d\log n$.
\stepcounter{alabel}
\begin{enumerate}[label = \textbf{\Alph{alabel}\arabic{enumi}}]
\item\label{pseud0} $\delta^{\pm}(D)\geq d\log n$ and $\Delta^\pm(D)\leq 10^6d\log n$.
\item\label{pseud1} For each $j\in \{+,-\}$ and any disjoint sets $A,B\subset V(D)$, with $|A|\leq 2m$, and, for each $v\in A$, $d^j(v,B)\geq d\log^{[2]}n/\log^{[4]}n$, we have $|B|\geq 10|A|$.
\item\label{pseud2} For each  $j\in \{+,-\}$ and any disjoint sets $A,B\subset V(D)$, with $|A|\leq 2m$, and, for each $v\in A$, $d^j(v,B)\geq d(\log n)^{2/3}$, we have $|B|\geq (\log n)^{1/3}|A|$.
\item\label{pseud3} Every set $A\subset V(D)$ with $|A|=m$ satisfies $|N^{\pm}(A)|\geq (1/2+\e)n$.
%\item\label{pseud4} The underlying graph of $D$ contains at most $\sqrt{n}$ cycles with length at most $\log n/(\log d(\log^{[2]} n)^2)$.
\end{enumerate}
\end{defn}

We will show that every sufficiently large pseudorandom digraph is Hamiltonian, as follows.

\begin{theorem}\label{pseudorandom} For each $\e>0$, there exists some $n_0=n_0(\e)$ such that, for every $d\geq 10^{-5}$, every $(d,\e)$-pseudorandom digraph with at least $n_0$ vertices is Hamiltonian.
\end{theorem}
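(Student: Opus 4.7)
The plan is to split the argument into two complementary pieces. First, introduce a combinatorial notion of a \emph{good partition} of a digraph, and show via Lemma~\ref{goodisgood} that any digraph admitting such a partition is Hamiltonian. Second, prove that every sufficiently large $(d,\varepsilon)$-pseudorandom digraph admits a good partition; this is the task of Section~\ref{secfindgood}, drawing on the ingredients of Sections~\ref{secloc}, \ref{secpaths} and \ref{seccover}. Theorem~\ref{pseudorandom} then follows immediately by chaining these two statements.

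A good partition of $D$ should package exactly the data needed to run the absorbing argument sketched in Section~\ref{secoutsub}: (i) a directed absorbing path $P$ with fixed endpoints together with a disjoint reservoir $R$ of size $\Omega(n\log n/\lg{2})$ such that, for every $R'\subseteq R$, some directed path on $V(P)\cup R'$ has the same endpoints as $P$; crucially, $R$ is built by contracting short vertex-disjoint directed paths into single super-vertices, so that each reservoir vertex inherits a large in- and out-neighbourhood; (ii) a balanced partition $V_1,\ldots,V_k$ of most of the remaining vertex set into blocks whose bipartite densities force a matching from $V_i$ into $V_{i+1}$ for each $i$; and (iii) a small auxiliary set $S$ whose two-sided expansion supports short connecting paths between any prescribed bounded collection of endpoint pairs. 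Given such data, Lemma~\ref{goodisgood} is proved by (a) using the matchings $V_i\to V_{i+1}$ to cover $V(D)\setminus(V(P)\cup V(R))$ by $O(k)$ disjoint directed paths, (b) using $S$ to stitch these paths and $P$ into a single directed cycle $Q$, and (c) applying the absorbing property to replace $P$ by a path on $V(P)\cup(R\setminus V(Q))$; finally uncontracting each reservoir super-vertex to its originating short path upgrades this to a Hamilton cycle of the original $D$.

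To produce a good partition inside a $(d,\varepsilon)$-pseudorandom digraph, I would first build the contracted-path reservoir $R$ and the absorbing path $P$ greedily, using the medium-set expansion~\ref{pseud3} together with the small-set expansions~\ref{pseud1} and \ref{pseud2} to keep attaching new gadgets no matter which vertices have already been consumed. I would then apply the Lov\'asz Local Lemma (Section~\ref{secloc}) to partition $V(D)\setminus(V(P)\cup V(R))$ at random into $V_1,\ldots,V_k$ and the reserve $S$ so that, with positive probability, every block inherits the pseudorandom parameters at the correct scale: in particular, each bipartite digraph from $V_i$ to $V_{i+1}$ has enough one-sided expansion to force a perfect matching (Section~\ref{seccover}), and $S$ retains enough two-sided expansion to support the directed Glebov--Krivelevich--Johannsen connection technology (Section~\ref{secpaths}).

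The main obstacle is the construction and analysis of the contracted-path reservoir, which is the key innovation of the paper: the reservoir paths must be short enough that contracting them does not destroy the expansion of neighbouring sets, yet numerous enough to push $|R|$ up to $\Omega(n\log n/\lg{2})$, which is what ultimately allows the minimum-degree hypothesis in Theorem~\ref{directedresilience} to be relaxed all the way down to $\delta^{\pm}\geq 1$. A secondary difficulty is that every expansion statement has to be enforced simultaneously for $j\in\{+,-\}$, which is why \ref{pseud1}--\ref{pseud3} are stated symmetrically in $j$ and why the connection lemma and the local-lemma partition both have to be re-engineered to manipulate in- and out-expansion in parallel rather than borrowed off the shelf from the undirected setting.
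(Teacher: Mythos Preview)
Your proposal is correct and follows essentially the same approach as the paper: deduce Theorem~\ref{pseudorandom} by combining Lemma~\ref{goodisgood} (good partition implies Hamiltonian) with Lemma~\ref{goodpart} (pseudorandom implies good partition), the latter assembled from the Local Lemma partitioning of Section~\ref{secloc}, the directed Glebov--Krivelevich--Johannsen connections of Section~\ref{secpaths}, and the matching-based path covers of Section~\ref{seccover}, with the contracted-path reservoir as the key new ingredient. The only notable organisational difference is that the paper's good partition (Definition~\ref{gooddefn}) records abstract connectivity properties \ref{D3}--\ref{D5} rather than the absorbing path $P$ and the block partition $V_1,\ldots,V_k$ themselves: the absorbing path is built inside the proof of Lemma~\ref{goodisgood} from the strong-connectivity property \ref{D3}, while the matchings $V_i\to V_{i+1}$ are used to establish property \ref{D4} in the course of proving Lemma~\ref{goodpart}, not in Lemma~\ref{goodisgood}.
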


To show this, we will show that any sufficiently large pseudorandom digraph has a \emph{good partition} (see Lemma~\ref{goodpart}). This definition requires that directed cycles are found through particular edges. For this, we define the following weak and strong connection properties, where the key difference is that the latter property allows a cycle to be found through particular edges \emph{in a given order}.

\begin{defn}\label{weakdefn} A vertex set $U$ in a digraph $D$ is \emph{weakly connected} if, for any independent set $E$ of directed edges in the complete digraph with vertex set $U$, there is a directed cycle in $D+E$ which contains every edge in $E$.
\end{defn}

\begin{defn}\label{strongdefn} A vertex set $U$ in a digraph $D$ is \emph{strongly connected} if, for any $\ell$ and any independent set $E=\{e_1,\ldots,e_\ell\}$ of directed edges in the complete digraph with vertex set $U$, there is a directed cycle in $D+E$ which contains the edges $e_1,\ldots,e_\ell$ in that order.
\end{defn}

We also merge vertices using the following definition.

\begin{defn}\label{mergedefn} In a digraph $D$, we merge a vertex $x$ into a vertex $y$ by deleting $x$ and $y$ and creating a new vertex $z$ with in-neighbourhood $N_D^-(x)\setminus\{y\}$ and out-neighbourhood $N_D^+(y)\setminus\{x\}$.
\end{defn}

Using these definitions, we define a good partition as follows.

\begin{defn}\label{gooddefn}
In a digraph $D$, a vertex partition $V(D)=A\cup B_1\cup B_2\cup R_1\cup R_2\cup R_3\cup R_4$ is an \emph{$(\ell,r)$-good partition} if the following hold.
\stepcounter{alabel}
\begin{enumerate}[label = \textbf{\Alph{alabel}\arabic{enumi}}]
\item Any set $U\subset V(D)\setminus A$ with $|U|\leq 4r$ is strongly connected in $D[A\cup U]$. \label{D3}
\item If $B'\subset V(D)$ contains $B_1\cup B_2$, and $u,v\in B'\setminus (B_1\cup B_2)$ with $u\neq v$, then there is a collection of at most $\ell$ disjoint directed paths with length at least 1 in $D+\vec{uv}$ which cover $B'$ exactly, each start and end in $B_2$, and one of which contains the edge $\vec{uv}$. \label{D4}%is an $\e/10$-pseudorandom digraph.
\item There are matchings $M_1$, $M_2$ and $M_3$ from $R_2$ into $R_1$, $R_2$ into $R_3$ and $R_4$ into $R_3$ in $D$, respectively, and $|R_i|=r$ for each $i\in [4]$, so that the following holds. \label{D2}
\item Let $f:R_1\to R_4$ be such that, for each $v\in R_1$, $f(v)$ and $v$ are the end vertices of an alternating path in $M_1\cup M_2\cup M_3$. Merge each vertex $v\in R_1$ into $f(v)$ in $D$ to get the digraph $D'$. Let $R$ be the set of merged vertices in $D'$. Then, any set $U\subset B_2$ with $|U|\leq 2\ell$ is weakly connected in $D'[R\cup U]$. \label{D5}
\end{enumerate}
If a digraph has an $(\ell,r)$-good partition for some $\ell,r>0$, then we say $D$ \emph{has a good partition}.
\end{defn}

We now give the main part of our argument, showing that any digraph with a good partition is Hamiltonian.

\begin{lemma}\label{goodisgood}
Any digraph with a good partition is Hamiltonian.
\end{lemma}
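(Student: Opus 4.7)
The plan is to construct a Hamilton cycle in $D$ by building a directed cycle in the merged digraph $D'$ that covers every vertex of $D'$, and then undoing the merges. Each of \ref{D3}, \ref{D4}, \ref{D5} plays a distinct role in an absorbing-method style argument.

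\textbf{Step 1 (Cover $B_1\cup B_2$ with few paths via \ref{D4}).} Choose two distinct splice vertices $u,v$ outside $A\cup B_1\cup B_2$ (for example in $R_2$) and apply \ref{D4} with $B'=B_1\cup B_2\cup\{u,v\}$. This yields at most $\ell$ disjoint directed paths $P_1,\dots,P_k$ in $D+\vec{uv}$ covering $B'$, each starting and ending in $B_2$, with one path containing the artificial edge $\vec{uv}$. Let $U\subset B_2$ be the set of $2k\le 2\ell$ path endpoints.

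\textbf{Step 2 (Thread the paths through $R$ in $D'$ via \ref{D5}).} Encode each $P_i$ as a virtual edge on $U$ oriented from its start to its end; this gives an independent set $E$ of at most $\ell$ edges in the complete digraph on $U$. Weak connectivity of $U$ in $D'[R\cup U]$ (from \ref{D5}) produces a directed cycle in $D'[R\cup U]+E$ containing every edge of $E$. Substituting each virtual edge by the corresponding path $P_i$ yields a cycle $C_1$ in $D'$ covering $B_1\cup B_2\cup\{u,v\}$ together with some subset of $R\cup U$.

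\textbf{Step 3 (Incorporate $A$ and the remaining reservoir vertices via \ref{D3} and \ref{D2}).} The matching structure of \ref{D2} lets me splice any merged vertex of $R$ not yet in $C_1$, together with its associated matching vertices in $R_2\cup R_3$, adjacent to a suitable vertex of the current cycle. Afterwards, \ref{D3} provides the strong connectivity needed to reroute an arc of the current cycle through $A$ in a prescribed edge order, covering every vertex of $A$. Iterated carefully, this extends $C_1$ to a cycle $C$ in $D'$ covering every vertex of $D'$.

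\textbf{Step 4 (Undo the merges).} Each merged vertex $z=[v\equiv f(v)]\in R$ appears in $C$ as a segment $a\to z\to b$; the corresponding edges $a\to v$ and $f(v)\to b$ exist in $D$ by construction of $D'$. The splicing arrangement from Step~3 is designed so that the matching vertices $r_2\in R_2,r_3\in R_3$ associated with $v$ via $M_1,M_2,M_3$ sit adjacent to $z$ in $C$; replacing the segment through $z$ by the appropriate directed expansion through $v,r_2,r_3,f(v)$ then produces a Hamilton cycle in $D$. The artificial edge $\vec{uv}$ from Step~1 is replaced by an honest directed $u$-to-$v$ path in $D$ during this expansion, using the reservoir structure at the merged vertex where $\vec{uv}$ sits.

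\textbf{Main obstacle.} The main difficulty will be the simultaneous bookkeeping in Step~3: arranging the splicings so that the unmerging in Step~4 finds exactly the matching vertices it needs in the right position, no vertex is visited twice, and the artificial edge $\vec{uv}$ can be replaced by an actual directed path in $D$. This is the point where the interplay between \ref{D2}, \ref{D3} and \ref{D5} is most delicate, and where a Hall- or flow-type matching argument between the merged vertices of $R$ and their associated matching partners in $R_2\cup R_3$ is expected to be required.
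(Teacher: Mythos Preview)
Your proposal has the order of the construction reversed, and this creates genuine gaps that cannot be patched without essentially redoing the argument.

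\textbf{The absorbing structure must be built first, via \ref{D3}.} In the paper, \ref{D3} is applied at the outset to the edges $\{\vec{u_ix_i},\vec{y_iv_i}:i\in[r]\}$ (with $u_i\in R_2$, $x_i\in R_1$, $y_i\in R_4$, $v_i\in R_3$), producing a long directed $u_1,v_r$-path through $A$ of the form
\[
u_1x_1P_1y_1v_1Q_1u_2x_2P_2y_2v_2Q_2\ldots u_r x_r P_r y_r v_r,
\]
where each segment $x_iP_iy_i$ can be deleted and replaced by the shortcut $\vec{u_iv_i}\in M_2$. This is the absorber. The vertices of $R_2$ and $R_3$ are \emph{always} in this path; they are not spliced in later. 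Your Step~3 treats them as something to be inserted beside merged vertices, which is backwards.

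\textbf{You have the unmerge wrong.} The merge is $x_i\in R_1$ into $y_i=f(x_i)\in R_4$, so the merged vertex $z_i$ has in-neighbourhood $N^-(x_i)$ and out-neighbourhood $N^+(y_i)$. Undoing an occurrence $a\to z_i\to b$ in the cycle therefore requires a directed $x_i,y_i$-path in $D$ to bridge the gap. The matchings $M_1,M_2,M_3$ give edges $\vec{u_ix_i},\vec{u_iv_i},\vec{y_iv_i}$, none of which is a directed path from $x_i$ to $y_i$; the only such path available is $x_iP_iy_i$ sitting inside $A$, and it exists precisely because \ref{D3} was used to build it beforehand. Your Step~4 proposes expanding through $v,r_2,r_3,f(v)$ using the matchings, but the orientations do not line up to give a directed $x_i\to y_i$ walk.

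\textbf{\ref{D3} does not cover all of $A$.} Strong connectivity only guarantees a cycle through the specified edges; it says nothing about covering $A$. In the paper, the vertices of $A$ not used in the absorbing path (call this set $A'$) are fed into \ref{D4}: one takes $B'=A'\cup B_1\cup B_2\cup\{u_1,v_r\}$, with the artificial edge $\vec{u_1v_r}$ standing in for the entire absorbing path. Your Step~1 sets $B'=B_1\cup B_2\cup\{u,v\}$, leaving all of $A$ uncovered, and your Step~3 invokes \ref{D3} to ``cover every vertex of $A$'', which the property simply does not do.

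The correct flow is: build the absorbing $u_1,v_r$-path through $A\cup R_1\cup R_2\cup R_3\cup R_4$ via \ref{D3}; cover $A'\cup B_1\cup B_2\cup\{u_1,v_r\}$ by $\le\ell$ paths via \ref{D4}, one containing $\vec{u_1v_r}$; link these paths into a cycle through the merged reservoir $R'$ via \ref{D5}; replace each used $z_i$ by $x_iP_iy_i$ and, correspondingly, shortcut $u_iv_i$ in the absorbing path; finally replace $\vec{u_1v_r}$ by the (now modified) absorbing path. There is no Hall- or flow-type argument needed; the toggling between $x_iP_iy_i$ and $\vec{u_iv_i}$ is exactly the absorption.
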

\ifproofdone
\else
%\begin{comment}
\begin{proof}%[Proof of Lemma~\ref{goodisgood}]
Let $D$ be a digraph and let $V(D)=A\cup B_1\cup B_2\cup R_1\cup R_2\cup R_3\cup R_4$ be an $(\ell,r)$-good partition of $D$, for some integers $\ell,r>0$. Using \ref{D2}, find a matching from $R_2$ into $R_1$, $R_2$ into $R_3$ and $R_4$ into $R_3$, so that \ref{D5} holds.
Use these matchings to label vertices so that $R_1=\{x_1,\ldots,x_r\}$,
 $R_2=\{u_1,\ldots,u_r\}$, $R_3=\{v_1,\ldots,v_r\}$ and $R_4=\{y_1,\ldots,y_r\}$, and, for each $i\in [r]$, $\vec{u_ix_i},\vec{u_iv_i},\vec{y_iv_i}\in E(D)$.

%Now, let $E=\{\vec{u_ix_i},\vec{y_iv_i}:i\in[\ell] \}$.
 By~\ref{D3} applied with $E=\{\vec{u_ix_i},\vec{y_iv_i}:i\in[r] \}$, we can find disjoint directed paths $P_i$ and $Q_i$, $i\in[r-1]$, and $P_r$, in $D[A\cup V(E)]$ so that
\begin{equation}\label{firstcycle}
u_1x_1P_1y_1v_1Q_1u_2x_2P_2y_2v_2Q_2\ldots u_r x_r P_r y_r v_r
\end{equation}
is a directed $u_1,v_r$-path in $D$ (see Figure~\ref{firstcyclepic}). Note that, for each $i\in [r]$, if $x_iP_iy_i$ is removed from the path in \eqref{firstcycle}, then, as $\vec{u_iv_i}\in E(D)$, this is still a directed $u_1,v_r$-path in $D$.

\begin{figure}[h]
\begin{center}
\begin{tikzpicture}[scale=1.1]

  \foreach \x in {1,3,5,7,9}
  {
  \draw ({\x},-0.55) -- ({\x-0.1},-0.45);
  \draw ({\x},-0.55) -- ({\x+0.1},-0.45);
  \draw ({\x+1},-0.45) -- ({\x+1-0.1},-0.55);
  \draw ({\x+1},-0.45) -- ({\x+1+0.1},-0.55);
  \draw [gray] ({\x},0) -- ({\x+1},0);
  \draw [gray] ({\x+0.55},0) -- ({\x+0.45},0.1);
  \draw [gray] ({\x+0.55},0) -- ({\x+0.45},-0.1);
  }

  \foreach \x in {1,...,10}
  {
  \draw [fill] (\x,0) circle [radius=0.1cm];
  }

  \foreach \x in {1,...,10}
  {
  \draw [fill] (\x,-1) circle [radius=0.1cm];
  }

  \foreach \x in {2,4,6,8}
  {
  \draw (\x,0) -- ({\x+1},0);
  \draw ({\x+0.5},0) -- ({\x+0.4},0.1);
  \draw ({\x+0.5},0) -- ({\x+0.4},-0.1);
  \draw ({\x+0.6},0) -- ({\x+0.5},0.1);
  \draw ({\x+0.6},0) -- ({\x+0.5},-0.1);
  }

  \foreach \x in {1,3,5,7,9}
  {
  \draw ({\x+0.5},-1) -- ({\x+0.4},-0.9);
  \draw ({\x+0.5},-1) -- ({\x+0.4},-1.1);
  \draw ({\x+0.6},-1) -- ({\x+0.5},-0.9);
  \draw ({\x+0.6},-1) -- ({\x+0.5},-1.1);
  }

  \foreach \x in {1,3,5,7,9}
  {
  \draw ({\x},-1) -- ({\x+1},-1);
  \draw ({\x},0) -- ({\x},-1);
  \draw ({\x+1},0) -- ({\x+1},-1);
  }

  \foreach \x in {1}
  {
  \draw ({2*\x-1},0.3) node {$u_{\x}$};
  \draw ({2*\x},0.3) node {$v_{\x}$};
  \draw ({2*\x-1},-1.3) node {$x_{\x}$};
  \draw ({2*\x},-1.3) node {$y_{\x}$};
  \draw ({2*\x-0.5},-1.3) node {$P_{\x}$};
  \draw ({2*\x+0.5},0.3) node {$Q_{\x}$};
  }

  \foreach \x in {5}
  {
  \draw ({2*\x-1},0.3) node {$u_{r}$};
  \draw ({2*\x},0.3) node {$v_{r}$};
  \draw ({2*\x-1},-1.3) node {$x_{r}$};
  \draw ({2*\x},-1.3) node {$y_{r}$};
  \draw ({2*\x-0.5},-1.3) node {$P_{r}$};
  }

  \foreach \x in {3}
  {
  \draw ({2*\x-1},0.3) node {$u_{i}$};
  \draw ({2*\x},0.3) node {$v_{i}$};
  \draw ({2*\x-1},-1.3) node {$x_{i}$};
  \draw ({2*\x},-1.3) node {$y_{i}$};
  \draw ({2*\x-0.5},-1.3) node {$P_{i}$};
  \draw ({2*\x+0.5},0.3) node {$Q_{i}$};
  }
\end{tikzpicture}
\end{center}
\vspace{-0.3cm}
\caption{The directed path in \eqref{firstcycle}, with the additional edges $\protect\vec{u_iv_i}$, $i\in [r]$, in grey.}\label{firstcyclepic}
\end{figure}

Let $R=R_1\cup R_2\cup R_3\cup R_4$ and note that every vertex in $R$ appears in \eqref{firstcycle}. Let $A'$ be the set of vertices in $A$ not appearing in \eqref{firstcycle}. By \ref{D4}, we can find some $m\in[\ell]$ and vertices and directed paths $s_iS_it_i$, $i\in[m]$, so that $\{s_iS_it_i:i\in [m]\}$ is a set of disjoint directed paths in $D+\vec{u_1v_r}$ with length at least $1$ which exactly covers $A'\cup B_1\cup B_2\cup\{u_1,v_r\}$ and for which $\{s_i,t_i:i\in [m]\}\subset B_2$,
 and so that $S_1$ contains the edge $\vec{u_1v_r}$. Say that $S_1=S_1'u_1v_r S_1''$. Thus, the following set of paths forms a partition of $A'\cup B_1\cup B_2\cup\{u_1,v_r\}$.
\begin{equation}\label{setofpaths}
\{s_1S_1'u_1\}\cup \{v_r S_1''t_1\}\cup\{s_iS_it_i:2\leq i\leq m\}
\end{equation}
Furthermore, then, the following paths form a partition of $V(D)$ (as depicted in Figure~\ref{firstcyclepic2}).
\begin{equation}\label{setofpaths2}
\{s_1S_1'u_1\}\cup \{v_r S_1''t_1\}\cup\{s_iS_it_i:2\leq i\leq m\}\cup \{x_iP_iy_i:i\in [r]\}\cup \{v_iQ_iu_{i+1}:1\leq i<r\}
\end{equation}

\begin{figure}[h]
\begin{center}
  \begin{tikzpicture}[scale=1.1]
  \foreach \x in {1,3,5,7,9}
{
\draw ({\x},-0.55) -- ({\x-0.1},-0.45);
\draw ({\x},-0.55) -- ({\x+0.1},-0.45);
\draw ({\x+1},-0.45) -- ({\x+1-0.1},-0.55);
\draw ({\x+1},-0.45) -- ({\x+1+0.1},-0.55);
\draw [gray] ({\x},0) -- ({\x+1},0);
\draw [gray] ({\x+0.55},0) -- ({\x+0.45},0.1);
\draw [gray] ({\x+0.55},0) -- ({\x+0.45},-0.1);
}

\foreach \x in {1,...,10}
{
\draw [fill] (\x,0) circle [radius=0.1cm];
}

\foreach \x in {1,...,10}
{
\draw [fill] (\x,-1) circle [radius=0.1cm];
}

\foreach \x in {2,4,6,8}
{
\draw (\x,0) -- ({\x+1},0);
\draw ({\x+0.5},0) -- ({\x+0.4},0.1);
\draw ({\x+0.5},0) -- ({\x+0.4},-0.1);
\draw ({\x+0.6},0) -- ({\x+0.5},0.1);
\draw ({\x+0.6},0) -- ({\x+0.5},-0.1);
}

\foreach \x in {1,3,5,7,9}
{
\draw ({\x+0.5},-1) -- ({\x+0.4},-0.9);
\draw ({\x+0.5},-1) -- ({\x+0.4},-1.1);
\draw ({\x+0.6},-1) -- ({\x+0.5},-0.9);
\draw ({\x+0.6},-1) -- ({\x+0.5},-1.1);
}

\foreach \x in {1,3,5,7,9}
{
\draw ({\x},-1) -- ({\x+1},-1);
\draw ({\x},0) -- ({\x},-1);
\draw ({\x+1},0) -- ({\x+1},-1);
}

\foreach \x in {1}
{
\draw ({2*\x-1},0.3) node {$u_{\x}$};
\draw ({2*\x},0.3) node {$v_{\x}$};
\draw ({2*\x-1},-1.3) node {$x_{\x}$};
\draw ({2*\x},-1.3) node {$y_{\x}$};
\draw ({2*\x-0.5},-1.3) node {$P_{\x}$};
\draw ({2*\x+0.5},0.3) node {$Q_{\x}$};
}

\foreach \x in {5}
{
\draw ({2*\x-1},0.3) node {$u_{r}$};
\draw ({2*\x},0.3) node {$v_{r}$};
\draw ({2*\x-1},-1.3) node {$x_{r}$};
\draw ({2*\x},-1.3) node {$y_{r}$};
\draw ({2*\x-0.5},-1.3) node {$P_{r}$};
}

\foreach \x in {3}
{
\draw ({2*\x-1},0.3) node {$u_{i}$};
\draw ({2*\x},0.3) node {$v_{i}$};
\draw ({2*\x-1},-1.3) node {$x_{i}$};
\draw ({2*\x},-1.3) node {$y_{i}$};
\draw ({2*\x-0.5},-1.3) node {$P_{i}$};
\draw ({2*\x+0.5},0.3) node {$Q_{i}$};
}
  \draw (0,0) -- (1,0);
  \draw (10,0) -- (11,0);

  \foreach \x in {0,11}
  {
  \draw [fill] (\x,0) circle [radius=0.1cm];
  }

  \foreach \x in {0,10}
  {
  \draw ({\x+0.5},0) -- ({\x+0.4},0.1);
  \draw ({\x+0.5},0) -- ({\x+0.4},-0.1);
  \draw ({\x+0.6},0) -- ({\x+0.5},0.1);
  \draw ({\x+0.6},0) -- ({\x+0.5},-0.1);
  }
  \

  \foreach \x in {5}
  {
  \draw ({2*\x+0.5},-0.4) node {$S_1''$};
  }

  \foreach \x in {1}
  {
  \draw ({2*\x-1.5},-0.4) node {$S_1'$};
  }

  \foreach \y in {-1,-2,-3}
  {
  \draw (0,{-\y}) -- (11,{-\y});
  }

  \foreach \y in {-1,-2,-3}
  {
  \foreach \x in {0,11}
  {
  \draw [fill] (\x,{-\y}) circle [radius=0.1cm];
  }
  }

  \foreach \y in {1,...,4}
  {
  \draw ({-0.3},{\y-1}) node {$s_\y$};
  \draw ({11+0.3},{\y-1}) node {$t_\y$};
  }

  \foreach \y in {-1,-2,-3}
  {
  \draw (5.6,{-\y}) -- (5.5,{-\y-0.1});
  \draw (5.6,{-\y}) -- (5.5,{-\y+0.1});
  \draw (5.5,{-\y}) -- (5.4,{-\y-0.1});
  \draw (5.5,{-\y}) -- (5.4,{-\y+0.1});
  }
  \foreach \y in {2,3,4}
  \draw (5.5,{\y-1.3}) node {$S_\y$};

  \draw (-0.5,3.5) -- (-0.5,-1.5) -- (11.5,-1.5) -- (11.5,3.5) -- (-0.5,3.5);

  \draw (-1.1,1) node {$V(D)$};

  \end{tikzpicture}
\end{center}
\vspace{-0.4cm}
\caption{The partition of $V(D)$ given by the paths in~\eqref{setofpaths2}. It remains to connect the $s_i,t_i$-paths in some order using some of the paths $x_jP_jy_j$, $j\in [r]$, to find a directed Hamilton cycle.}\label{firstcyclepic2}
\end{figure}

Let $D'$ be the digraph formed from $D$ by, for each $i\in [r]$, merging $x_i$ into $y_i$ to get the vertex $z_i$. Let $R'=\{z_i:i\in [r]\}$. Let $E'=\{\vec{s_it_i}:i\in[m]\}$. By \ref{D5}, there are disjoint directed paths $T_1,\ldots,T_{m}$ in $D'[R'\cup V(E')]$ and a bijection $g:[m]\to [m]$
with $g(1)=1$ so that
\[
t_{1}T_1s_{g(2)}t_{g(2)}T_2s_{g(3)}t_{g(3)}T_3\ldots s_{g(m)}t_{g(m)}T_ms_1
\]
is a directed path in $D'[R'\cup V(E')]+E'$.

For each $i\in [m]$, replace each vertex $z_j$, $j\in [r]$, of $T_i$ by the corresponding directed path $x_jP_jy_j$ and call the resulting path $T_i'$. Note that, from the definition of $D'$, $T_i'$ is a directed path in $D$. Thus, the path
\[
t_{1}T'_1s_{g(2)}t_{g(2)}T'_2s_{g(3)}t_{g(3)}T'_3\ldots s_{g(m)}t_{g(m)}T'_ms_1
\]
is a directed path in $D+E'$. Replacing each edge $s_it_i$, $2\leq i\leq m$ with the directed path $s_iS_it_i$, and adding the paths $v_r S_1''t_1$ and $s_1S_1'u_1$, we get that
\begin{equation*}\label{pathsetone}
C_1:=v_r S_1''t_{1}T'_1s_{g(2)}S_{g(2)}t_{g(2)}T'_2s_{g(3)}t_{g(3)}T'_3\ldots s_{g(m)}S_{g(m)}t_{g(m)}T'_m s_1S_1'u_1
\end{equation*}
is a directed path in $D$. As the paths in \eqref{setofpaths} form a partition of $A'\cup B_1\cup B_2\cup\{u_1,v_r\}$, $C_1$ has vertex set $A'\cup B_1\cup B_2\cup\{u_1,v_r\}$ with some sets $V(x_jP_jy_j)$ added (those appearing in some path $T'_i$).
%Using the definition of $A'$, then, the path $C_1$ contains every vertex not in the path in \eqref{firstcycle} as well as $u_1$ and $v_r$, and the vertices in $V(x_jP_jy_j)$ for some $j\in [r]$.

For each $j\in [r]$, if $x_jP_jy_j$ is contained in $C_1$, then let $Z_j$ be the empty path on no vertices, and otherwise let $Z_j$ be $x_jP_jy_j$. Thus, the path
\begin{equation*}\label{pathsettwo}
C_2:=u_1Z_1v_1Q_1u_2Z_2v_2Q_2\ldots u_r Z_r  v_r
\end{equation*}
is the path in \eqref{firstcycle} with some paths $x_iP_iy_i$, $i\in [r]$, removed, which, by construction, is a directed path. The path $C_2$ contains exactly the vertices in the path in \eqref{firstcycle} except for those appearing in $C_1$, as well as $u_1$ and $v_r$. Thus, as the path $C_1$ contains all the vertices not in the path \eqref{firstcycle}, the two paths $C_1$ and $C_2$ form a cycle with vertex set $V(D)$, as required.
\end{proof}
\fi

\subsection{Preliminaries}
We will use the following well-known version of Chernoff's lemma (see, for example,~\cite[Corollary 2.3]{randomgraphs}).

\lem\label{chernoff} If $X$ is a binomial variable with standard parameters~$n$ and $p$, denoted $X=\mathrm{Bin}(n,p)$, and $\e$ satisfies $0<\e\leq 3/2$, then
\[
\P(|X-\E X|\geq \e \E X)\leq 2\exp\left(-\e^2\E X/3\right).\hfill\qedhere
\]
\ma

To find a matching from one set into another, we will use the following simple proposition (for undirected graphs).

\begin{prop}\label{matchymatchy} Let $G$ be a bipartite graph with vertex classes $A$ and $B$ with size $n$ each, such that, for each $U\subset A$ or $U\subset B$ with $|U|\leq \lceil n/2\rceil$, $|N(U)|\geq |U|$.
Then, there is a matching from $A$ to $B$ in $G$.
\end{prop}
\begin{proof}
Let $U\subset A$ with $\lceil n/2\rceil < |U|\leq n$. By considering a subset $U'\subset U$ with size $\lceil n/2\rceil$, we have that $|N(U)|\geq |N(U')|\geq \lceil n/2\rceil$. Thus, $|B\setminus N(U)|\leq n-\lceil n/2 \rceil=\lfloor n/2\rfloor$, so that, by the property in the lemma, $|N(B\setminus N(U))|\geq |B\setminus N(U)|$. Thus, as there are no edges between $U$ and $B\setminus N(U)$, we have
\[
|U|\leq n-|N(B\setminus N(U))|\leq n-|B\setminus N(U)|=n-(n-|N(U)|),
\]
and therefore $|N(U)|\geq |U|$.  By the property in the lemma, this is also true for all $U\subset A$ with $|U|\leq \lceil n/2\rceil$. Thus, Hall's matching condition is satisfied, and there is a matching from $A$ into $B$.
\end{proof}

%%%%%%%%%%%%%%%%%%%%%%%%%%%%%%%%%%%%%%%%%%%%%%%%%%%%%%%%%%%%%%%%%%%%%%%%%%%%%%%%%%%%%%%%%%%%%%%%%%%%%%%%%%%%%%%%%%%%%%%%%%%%%%%%%%%%%%%%%%%%%%%%%%%%%%%%%%%%%%%%%%%%%%%%%%%%%%%%%%%%%%
%%%%%%%%%%%%%%%%%%%%%%%%%%%%%%%%%%%%%%%%%%%%%%%%%%%%%%%%%%%%%%%%%%%%%%%%%%%%%%%%%%%%%%%%%%%%%%%%%%%%%%%%%%%%%%%%%%%%%%%%%%%%%%%%%%%%%%%%%%%%%%%%%%%%%%%%%%%%%%%%%%%%%%%%%%%%%%%%%%%%%%

\section{Set division using the local lemma}\label{secloc}
We will take vertex partitions using the following version of the Erd\H{os}-Lov\'asz local lemma, due to Lov\'asz~\cite[Theorem 1.1]{spencer}.

\begin{theorem}\label{genlocal} %From Asymptotic lower bounds for Ramsey functions by JoelSpencer, by a Lovasz result.
Let $A_1,\ldots,A_n$ be events in a probability space $\Omega$ with dependence graph $G$. Suppose there exist $0<x_1,\ldots,x_n<1$ such that, for each $i\in [n]$,
\[
\P(A_i)\leq x_i\prod_{j:ij\in E(G)}(1-x_j).
\]
Then, no such event $A_i$ occurs with strictly positive probability.
\end{theorem}

Through the following lemma, we use Theorem~\ref{genlocal} as follows. Given a vertex set $A$ in a digraph, where every vertex has plenty of in- and out-neighbours in $A$, we partition $A$ so that every vertex has at least some in- and out-neighbours in each subset in the partition. We use this in a similar manner to Hefetz, Krivelevich and Szab{\'o}~\cite{HKS12} on their work on the sharp threshold of certain spanning trees in $G(n,p)$.

\begin{lemma}\label{divisionlemma}
Let $\ell,m,n,\delta,\Delta\in \N$ with $2\leq \ell\leq \log n$, and let $\eps\in (0,1)$. Let $D$ be a digraph with $n$ vertices and $A\subset V(D)$ so that the following hold.
\stepcounter{alabel}
\begin{enumerate}[label = \textbf{\Alph{alabel}\arabic{enumi}}]
\item  For each $v\in V(D)$, $d^\pm(v,A)\geq \delta$ and $d^\pm(v)\leq \Delta$. \label{peep1}
\item For each $U\subset V(D)$ with $|U|=m$, $|N^{\pm}(U,A)|\geq (1/2+\e)|A|$. \label{peep2}
\end{enumerate}
Let $a=|A|$, and suppose that $a_i$, $i\in [\ell]$, are integers with $\sum_{i\in [\ell]}a_i\leq a$ such that the following hold for each $i\in [\ell]$.
\stepcounter{alabel}
\begin{enumerate}[label = \textbf{\Alph{alabel}\arabic{enumi}}]
\item\label{cond0} $\frac{\eps^2a_i^2}{a}\geq 10^3\ell^3$
\item\label{cond1} $\frac{\eps^2a_i^2}{a}\cdot\exp(\frac{a_i\delta}{24a})\geq 10^{5}\ell^3n$
\item\label{cond3} $\exp(\frac{ a_i\delta}{24a})\geq 320 \ell\Delta^2$
\item\label{cond2} $\frac{\eps^2 a_i}{10^3}\geq m\log (\frac{en}{m})$
\end{enumerate}

Then, there are disjoint sets $A_1,\ldots,A_\ell\subset A$ such that $|A_i|=a_i$ for each $i\in [\ell]$ and the following hold.
\stepcounter{alabel}
\begin{enumerate}[label = \textbf{\Alph{alabel}\arabic{enumi}}]
\item \label{finalprop1} For each $v\in V(D)$ and $i\in [\ell]$, $a_i\delta/4a\leq d^\pm(v,A)\leq 4a_i\Delta /a$.
\item \label{finalprop2} For each $U\subset V(D)$, with $|U|=m$, and $i\in [\ell]$, $|N^{\pm}(U,A_i)|\geq (1/2+\e/2)a_i$.
\end{enumerate}
\end{lemma}

\ifproofdone
\else
\begin{proof} Without loss of generality, assume that $a_1\geq a_2\geq \ldots \geq a_\ell$. Let $p=\eps a_\ell/10\ell a$. For each $i\in [\ell]$, let
\begin{equation}
p_i=(a_i/a)-p\geq (1-\eps/10)a_i/a.\label{nugget1}
\end{equation}
Let
\begin{equation}\label{nugget2}
p_0=\min\bigg\{1-\sum_{i\in [\ell]}p_i,(\ell+1)p\bigg\}\leq (\ell+1)p\leq \frac{\eps a_\ell}{5a}\overset{\eqref{nugget1}}{\leq} p_\ell.
\end{equation}
Noting that $\sum_{i=0}^\ell p_i\leq 1$, pick random disjoint sets
\[
B_0, B_1, B_2, \ldots, B_\ell\subset A
\]
so that, for each vertex $v\in V(D)$, $\P(v\in B_i)=p_i$ for each $0\leq i\leq \ell$, and whether $v$ appears in one of the sets, and which set it appears in, is independent of the location of all the other vertices in $D$.

We will show, using Theorem~\ref{genlocal}, that with positive probability the partition satisfies the following properties.

\stepcounter{alabel}
\begin{enumerate}[label = \textbf{\Alph{alabel}\arabic{enumi}}]
\item \label{sunny6} For each $i\in [\ell]$, $|B_i|\leq a_i$, and $|\cup_{i=0}^\ell B_i|\geq \sum_{i\in [\ell]}a_i$.
\item \label{sunny8} Every subset $U\subset V(D)$ with $|U|=m$ satisfies $|N^\pm(U,B_i)|\geq (1/2+\eps/2)a_i$ for each $i\in [\ell]$.
\item \label{sunny7}  For each $v\in V(D)$ and $i\in [\ell]$, $a_i\delta/4a\leq d^\pm(v,B_i)\leq 2a_i\Delta /a$.
\item \label{sunny9} For each $v\in V(D)$, $d^\pm(v,B_0)\leq 2a_\ell \Delta/a$.
\end{enumerate}

This will be sufficient to prove the lemma. Indeed, there will thus exist some partition in which  \ref{sunny6}--\ref{sunny9} hold. Let then $A_1,\ldots,A_\ell \subset A$ be disjoint subsets such that $B_i\subset A_i\subset B_i\cup B_0$ and $|A_i|=a_i$ hold for each $i\in [\ell]$; this is possible by~\ref{sunny6}.
As \ref{finalprop1} follows from \ref{sunny7} and \ref{sunny9}, and \ref{finalprop2} follows from \ref{sunny8}, we have the required partition.

Let then $B$ be the event that \ref{sunny6} or \ref{sunny8} does not hold. For each $v\in V(D)$, let $B(v)$ be the event that \ref{sunny7} or \ref{sunny9} does not hold for $v$. Let
\begin{equation}\label{clock}
q_B=1/2\;\;\;\text{ and }\;\;\;q=40\ell\exp(-a_\ell\delta/24a).
\end{equation}
Note that each event $B(v)$ has some dependence on $B$
and at most $4\Delta^2$ other events $B(v')$.

We will show the following two claims.
\begin{claim}\label{new2} For each $v\in V(D)$, $\P(B(v))\leq q(1-q_B)(1-q)^{4\Delta^2}$.
\end{claim}
\begin{claim}\label{new1} $\P(B)\leq q_B(1-q)^n$.
\end{claim}

Thus, by Theorem~\ref{genlocal} and Claims~\ref{new2} and~\ref{new1}, with positive probability some partition exists for which no event $B$ or $B(v)$, $v\in V(D)$, holds, and thus for which \ref{sunny6}--\ref{sunny9} hold, as required.
It remains then to prove the two claims.

\begin{proof}[Proof of Claim~\ref{new2}]
Let $v\in V(D)$, $j\in \{+,-\}$ and $d=d^j(v,A)$, so that, by \ref{peep1}, $\delta\leq d\leq \Delta$. For each $i\in [\ell]$, noting that, by \eqref{nugget1}, $a_i/2a\leq p_i\leq a_i/a$, we have, using Lemma~\ref{chernoff}, that
\begin{align}
\P(d^j(v,B_i)\notin (a_i\delta/4a,2a_i\Delta/a))
&\leq \P(|d^j(v,B_i)-p_id|>p_id/2) \nonumber
\\
&\leq 2\exp(-p_id/12)\leq 2\exp(-a_i\delta/24a)
\leq 2\exp(-a_{\ell}\delta/24a).\label{climate}
\end{align}
As, by \eqref{nugget2}, $p_0\leq p_\ell$, we also have that
\[
\P(d^j(v,B_0)>2a_{\ell}\Delta/a)\leq \P(d^j(v,B_\ell)>2a_\ell\Delta/a)\overset{\eqref{climate}}{\leq} 2\exp(-a_\ell\delta/24a).
\]

Thus, for each $v\in V(D)$, we have $\P(B(v))\leq 4(\ell+1)\exp(-a_{\ell}\delta/24a)\leq q/5$. By \ref{cond3}, we have that $4q\Delta^2\leq 1/2$. Thus,  as $q_B=1/2$,
\[
\P(B(v))\leq q\cdot (1-q_B)\cdot (1-4q\Delta^2)\leq q\cdot (1-q_B)\cdot (1-q)^{4\Delta^2}.\qedhere
\]
\end{proof}

We will prove Claim~\ref{new1} using two further claims.
\begin{claim}\label{new3} $\P(\ref{sunny6}\text{ holds})\leq 4\ell\exp(-\eps^2a_{\ell}^2/300\ell^2a)$.
\end{claim}
\begin{proof}[Proof of Claim~\ref{new3}]
By \eqref{nugget1} and Lemma~\ref{chernoff}, for each $i\in [\ell]$,
\begin{align}
\P(|B_i|> a_i)&= \P(|B_i|-p_ia>pa)
= \P(|B_i|-p_ia>(p/p_i)\cdot p_ia)
\leq 2\exp(-(p/p_i)^2\cdot p_ia/3)\nonumber
\\
&= 2\exp(-p^2a/3p_i)\leq 2\exp(-p^2a/3)= 2\exp(-\eps^2a_{\ell}^2/300\ell^2a).\label{tulipery}
\end{align}
Note that, for each $v\in A$, by \eqref{nugget1} and \eqref{nugget2}
\[
\P(v\in \cup_{i=0}^\ell B_i)=\sum_{i=0}^\ell p_i=\min\bigg\{1,\Big(\sum_{i\in[\ell]}\frac{a_i}{a}\Big)+p\bigg\}=:\bar{p}.
\]
If $\bar{p}=1$, then, with probability $1$, $|\cup_{i=0}^\ell B_i|=a\geq \sum_{i\in[\ell]}a_i$, as required, so assume that $\bar{p}=(\sum_{i\in[\ell]}a_i/a)+p$.
Then, by Lemma~\ref{chernoff},
\begin{align}
\P\bigg(|\cup_{i=0}^\ell B_i|< \sum_{i\in [\ell]}a_i\bigg)
&= \P\big(|\cup_{i=0}^\ell B_i|-\bar{p}a< -p a\big)\nonumber
= \P(|\cup_{i=0}^\ell B_i|-\bar{p}a< -p(\bar{p}a)/\bar{p})
\\
&
\leq 2\exp(-(p/\bar{p})^2\cdot (\bar{p}a)/3)
\leq 2\exp(-p^2a/3)= 2\exp(-\eps^2a_{\ell}^2/300\ell^2a).\label{tulipery2}
\end{align}
Thus, the claim follows from \eqref{tulipery2} and, for each $i\in [\ell]$, \eqref{tulipery}.
\end{proof}
\begin{claim}\label{new4}
For each $U\subset V(D)$ with $|U|=m$, $i\in [\ell]$, and $j\in \{+,-\}$,
\[
\P(|N^j(U,B_i)|< (1/2+\eps/2)a_i)\leq 2 \exp(-\eps^2a_i/400).
\]
\end{claim}
\begin{proof}[Proof of Claim~\ref{new4}] Let $U\subset V(D)$ with $|U|=m$, $i\in [\ell]$, and $j\in \{+,-\}$, and take $U'=N^j(U,A)$. By \ref{peep2}, $|U'|\geq (1/2+\eps)a$, so that
\[
\E|U'\cap B_i|\overset{\eqref{nugget1}}{\geq} (1/2+\eps)a\cdot (1-\eps/10)a_i/a\geq (1/2+3\eps/4)a_i\geq \frac{1/2+\eps/2}{1-\eps/8}a_i.
\]
Thus, we have both $\E|U'\cap B_i|\geq a_i/2$ and $(1/2+\eps/2)a_i\leq (1-\eps/8)\E|U'\cap B_i|$. Therefore, by Lemma~\ref{chernoff},
\[
\P(|U'\cap B_i|< (1/2+\eps/2)a_i)\leq 2\exp(-(\eps/8)^2\cdot \E|U'\cap B_i|/3)\leq 2\exp(-(\eps/8)^2\cdot a_i/6)\leq 2\exp(-\eps^2a_i/400).\qedhere
\]
\end{proof}
With these two claims, we can now prove Claim~\ref{new1}, as follows.

\medskip

\noindent\emph{Proof of Claim~\ref{new1}.}
By Claim~\ref{new3} and Claim~\ref{new4}, and taking into account that there are at most $\binom{n}{m}\leq (en/m)^m$ subsets with size $m$ of $V(D)$, we have
\begin{align*}
\P(B)&\leq 4\ell \exp\left(-\frac{\eps^2a_{\ell}^2}{300\ell^2a}\right)+4\ell\cdot \Big(\frac{en}{m}\Big)^m\cdot \exp\left(-\frac{\eps^2a_{\ell}}{400}\right)
\\
&\overset{\ref{cond2}}{\leq} 4\ell\exp\left(-\frac{\eps^2a_{\ell}^2}{300\ell^2a}\right)+4\ell \exp\left(-\frac{\eps^2a_{\ell}}{10^3}\right)\leq 8\ell\exp
\left(-\frac{\eps^2a_{\ell}^2}{300\ell^2a}\right).
\end{align*}
Note that, by \eqref{clock} and \ref{cond3}, $q\leq 1/2$. Therefore, as $q_B=1/2$, we have $q_B(1-q)^n\geq e^{-2qn}/2$. Thus, for Claim~\ref{new1}, it is sufficient to show that
\[
8\ell\exp\left(-\frac{\eps^2a_{\ell}^2}{300\ell^2a}\right)\leq e^{-2qn}/2,
\]
or, equivalently,
\[
\frac{\eps^2a_{\ell}^2}{300\ell^2a}-\log(16\ell)\geq 2qn.
\]
By \ref{cond0}, then, it is sufficient to show that
\[
\frac{\eps^2a_{\ell}^2}{300\ell^2a}\geq 4qn.
\]
However, as $q=40\ell\exp(-a_\ell\delta/24a)$, this holds directly from \ref{cond1}.
\hspace{4.7cm}{\textcolor{white}.}\qed
\end{proof}

\fi

%%%%%%%%%%%%%%%%%%%%%%%%%%%%%%%%%%%%%%%%%%%%%%%%%%%%%%%%%%%%%%%%%%%%%%%%%%%%%%%%%%%%%%%%%%%%%%%%%%%%%%%%%%%%%%%%%%%%%%%%%%%%%%%%%%%%%%%%%%%%%%%%%%%%%%%%%%%%%%%%%%%%%%%%%%%%%%%%%%%%%%
%%%%%%%%%%%%%%%%%%%%%%%%%%%%%%%%%%%%%%%%%%%%%%%%%%%%%%%%%%%%%%%%%%%%%%%%%%%%%%%%%%%%%%%%%%%%%%%%%%%%%%%%%%%%%%%%%%%%%%%%%%%%%%%%%%%%%%%%%%%%%%%%%%%%%%%%%%%%%%%%%%%%%%%%%%%%%%%%%%%%%%

\section{Path connection in pseudorandom digraphs}\label{secpaths}
In order to connect edges into a cycle, we develop a directed version of techniques of Glebov, Krivelevich and Johannsen~\cite{RomanPhD}. In~\cite{RomanPhD}, a concept of $(d,m)$-extendability is defined and used to find trees in any graph with certain expansion properties. In short, when a tree $S$ is $(d,m)$-extendable in a graph $G$ and $v\in V(S)$, then, subject to certain simple conditions, we can add a leaf to $v$ in $S$ so that the subsequent tree remains $(d,m)$-extendable. As shown in~\cite{RomanPhD}, this gives a flexible framework for embedding trees, but also allows paths to be found between vertex sets in $(d,m)$-extendable graphs. This latter property is what we want, except we will adapt this to work with directed graphs.

To do this, we work with two bipartite graphs, $H_1$ and $H_2$ say, with the same vertex classes, $A_1$ and $A_2$ say.
When we apply the results to a digraph $D$, $H_1$ will typically be the edges in $D$ directed from $A_1$ into $A_2$ (with the directions removed) while $H_2$ will be the edges in $D$ directed from $A_2$ into $A_1$. If the edges of a path alternate between $H_1$ and $H_2$ then this will be a directed path in the digraph.

To define our version of extendability we need the following definition.

\begin{defn}
Given a forest $S$, an edge $e\in E(S)$ and a vertex set $X\subset V(S)$ with exactly one vertex in each tree in $S$, let $d(e,X)$ be the distance of the shortest path from any vertex in $e$ to any vertex in $X$.
\end{defn}

Our extendable subgraph $S$ will be a forest lying in the union of the two bipartite graphs $H_1$ and $H_2$ mentioned above. We use the set $X$, with exactly one vertex per component tree of $S$, to record which edges of $S$ are in $H_1$ and which are in $H_2$. The edges will alternate between $H_1$ and $H_2$ working out from the vertex in $X$ in each component of $S$. The choice of $H_1$ and $H_2$ we later use, arising from a digraph $D$ as mentioned above, will imply that $S$ with the edges of each component tree directed out from the vertex in $X$ will lie in $D$.

We define extendability in a pair of bipartite graphs as follows.

\begin{defn}\label{extendability} Suppose that $H_1$ and $H_2$ are two bipartite graphs with the same vertex classes $A_1$ and $A_2$, and that $S$ is a forest containing $X\subset V(S)\cap A_1$ in which exactly one vertex in each tree in $S$ is in $X$. For each $i\in [2]$, let $S_i$ be the subgraph of $S$ with edge set $\{e\in E(S):d(e,X)\equiv i+1\mod 2\}$.

Then, we say $(S,X)$ is \emph{$(d,m)$-extendable in $(H_1,H_2)$} if the following hold.
\stepcounter{alabel}
\begin{enumerate}[label = \textbf{\Alph{alabel}\arabic{enumi}}]
\item $\Delta(S)\leq d$.\label{crack1}
\item For each $i\in [2]$, $S_i\subset H_i$.\label{crack2}
\item For each $i\in [2]$ and $U\subset A_i$ with $0<|U|\leq 2m$,\label{crack3}
\[
|N_{H_i}(U)\setminus V(S)|\geq d|U|-e_{S_i}(U,A_{3-i}).
\]
\item For each $i\in [2]$ and $U\subset A_i$ with $|U|\geq m$, $|N_{H_i}(U)|\geq |A_{3-i}|/2$.\label{crack4}
\end{enumerate}
\end{defn}

Given a $(d,m)$-extendable forest that is not too large, we can add an edge to any vertex with degree less than $d$ in the forest while remaining $(d,m)$-extendable, as follows.

\begin{lemma}\label{extendleaf} Let $d\geq 2$ and $m\geq 1$. Suppose that $H_1$ and $H_2$ are two bipartite graphs with the same vertex classes $A_1$ and $A_2$, and that $S$ is a forest containing $X\subset V(S)\cap A_1$ in which exactly one vertex in each tree in $S$ is in $X$. Suppose that $(S,X)$ is $(d,m)$-extendable in $(H_1,H_2)$, and
\begin{equation}\label{carbon}
|S|\leq \min\{|A_1|,|A_2|\}/2-2dm-2.
\end{equation}

 Then, for each $i\in [2]$ and $x\in V(S)\cap A_i$ with $d_S(x)<d$, there exists some $y\in N_{H_i}(x)\setminus V(S)$ so that $(S+xy,X)$ is $(d,m)$-extendable in $(H_1,H_2)$.
\end{lemma}
\ifproofdone
\else
\begin{proof} Suppose, to the contrary, that there is some $i\in [2]$ and $x\in V(S)\cap A_i$ with $d_S(x)<d$ for which no such $y$ exists. For each $y\in N_{H_i}(x)\setminus V(S)$, \ref{crack1}, \ref{crack2} and \ref{crack4} for $(S+xy,X)$ to be $(d,m)$-extendable in $(H_1,H_2)$ hold directly from the same statements for the extendability of $(S,X)$ and as $y\in N_{H_i}(x)\setminus V(S)$. Furthermore, as $V(S+xy)\cap A_i=V(S)\cap A_i$, \ref{crack3} for the index $3-i$ holds for $(S+xy,X)$.

Therefore, for each $y\in N_{H_i}(x)\setminus V(S)$, there is some set $U_y\subset A_i$ such that $|U_y|\leq 2m$ and
\begin{equation}\label{sorefoot}
|N_{H_i}(U_y)\setminus V(S+xy)|< d|U_y|-e_{S_i+xy}(U_y,A_{3-i}).
\end{equation}
From simple set relations and the extendability of $(S,X)$ in $(H_1,H_2)$, we have, for each $y\in N_{H_i}(x)\setminus V(S)$, that
\begin{align}
|N_{H_i}(U_y)\setminus V(S+xy)|+\mathbf{1}_{\{y\in N_{H_i}(U_y) \}}&=|N_{H_i}(U_y)\setminus V(S)|\nonumber\\
&\overset{\ref{crack3}}{\geq} d|U_y|-e_{S_i}(U_y,A_{3-i}) \nonumber\\
&=d|U_y|-e_{S_i+xy}(U_y,A_{3-i})+\mathbf{1}_{\{x\in U_y\}}.\label{newsore}
\end{align}
Thus, as \eqref{sorefoot} holds, we must have that $\mathbf{1}_{\{y\in N_{H_i}(U_y) \}}=1$, $\mathbf{1}_{\{x\in U_y\}}=0$, and that equality holds throughout \eqref{newsore}. That is, we have the following.
\begin{enumerate}[label = {\bfseries \Alph{alabel}5}]
\item For each $y\in N_{H_i}(x)\setminus V(S)$, we have $y\in N_{H_i}(U_y)$, $|N_{H_i}(U_y)\setminus V(S)|= d|U_y|-e_{S_i}(U_y,A_{3-i})$ and $x\notin U_y$.\label{crack5}
\end{enumerate}

We will now show that, in fact, each such $U_y$ must have size at most $m$, using the following claim.

\begin{claim}\label{arthur}
If $U\subset A_i$ and $m\leq |U|\leq 2m$, then $|N_{H_i}(U)\setminus V(S+xy)|>d|U|$.
\end{claim}
\begin{proof}[Proof of Claim~\ref{arthur}]
Let $U\subset A_i$ with $m\leq |U|\leq 2m$. Then, from \ref{crack4} and \eqref{carbon}, we have
\[
|N_{H_i}(U)\setminus V(S+xy)|\geq |A_{3-i}|/2-|S|-1\geq 2dm+1>d|U|.\qedhere
\]
\end{proof}
Thus, by \eqref{sorefoot} and Claim~\ref{arthur}, for each $y\in N_{H_i}(x)\setminus V(S)$, we have $|U_y|<m$.

\begin{claim}\label{claimthis}
For each $Y\subset N_{H_i}(x)\setminus V(S)$, $|\cup_{y\in Y}U_y|< m$ and
\[
|N_{H_i}(\cup_{y\in Y}U_y)\setminus V(S)|= d|\cup_{y\in Y}U_y|-e_{S_i}(\cup_{y\in Y}U_y,A_{3-i}).
\]
\end{claim}
\begin{proof}[Proof of Claim~\ref{claimthis}] We prove this by induction on $|Y|$. We know this to be true if $|Y|=1$ by \ref{crack5}, so assume that $|Y|>1$, and, picking $y_0\in Y$, that the claim is true for $Y':=Y\setminus \{y_0\}$ and $\{y_0\}$. Note first that $|\cup_{y\in Y}U_y|\leq |\cup_{y\in Y'}U_y|+|U_{y_0}|\leq 2m$.
Then, by the induction hypothesis and simple set relations (in particular, that, for all sets $A,B$ in a graph $G$, $|N(A\cup B)|+|N(A\cap B)|\leq |N(A)|+|N(B)|$), we have
\begin{align}
|N_{H_i}&(\cup_{y\in Y}U_y)\setminus V(S)|+|N_{H_i}((\cup_{y\in Y'}U_y)\cap U_{y_0})\setminus V(S)|
\nonumber\\
&\leq |N_{H_i}(\cup_{y\in Y'}U_y)\setminus V(S)|+|N_{H_i}(U_{y_0})\setminus V(S)|
\nonumber\\
&= d|\cup_{y\in Y'}U_y|-e_{S_i}(\cup_{y\in Y'}U_y,A_{3-i})+d|U_{y_0}|-e_{S_i}(U_{y_0},A_{3-i})\nonumber\\
&= d|\cup_{y\in Y}U_y|+d|(\cup_{y\in Y'}U_y)\cap U_{y_0}|-e_{S_i}(\cup_{y\in Y}U_y,A_{3-i})-e_{S_i}((\cup_{y\in Y'}U_y)\cap U_{y_0},A_{3-i}).\label{germane}
\end{align}
Now, by \ref{crack3} applied to $(\cup_{y\in Y'}U_y)\cap U_{y_0}$ (noting that this also holds if the set is empty), we have that
\[
|N_{H_i}((\cup_{y\in Y'}U_y)\cap U_{y_0})\setminus V(S)|\geq d|(\cup_{y\in Y'}U_y)\cap U_{y_0}|
-e_{S_i}((\cup_{y\in Y'}U_y)\cap U_{y_0},A_{3-i}).
\]
Therefore, in combination with \eqref{germane}, we have
\begin{equation}\label{resurge}
|N_{H_i}(\cup_{y\in Y}U_y)\setminus V(S)|
\leq d|\cup_{y\in Y}U_y|-e_{S_i}(\cup_{y\in Y}U_y,A_{3-i}).
\end{equation}
Thus, from \ref{crack3} applied to $\cup_{y\in Y}U_y$, we have that equality holds in \eqref{resurge}, as required. By Claim~\ref{arthur}, we also then have that $|\cup_{y\in Y}U_y|<m$.
\end{proof}

From Claim~\ref{claimthis} with $Y=N_{H_i}(x)\setminus V(S)$, we have $|\cup_{y\in Y}U_y|<m$ and
\begin{equation}\label{martha1}
|N_{H_i}(\cup_{y\in Y}U_y)\setminus V(S)|=d|\cup_{y\in Y}U_y|-e_{S_i}(\cup_{y\in Y}U_y,A_{3-i}).
\end{equation}
By \ref{crack5}, we have $y\in N_{H_i}(U_y)$ for each $y\in Y=N_{H_i}(x)\setminus V(S)$, and thus
\begin{equation}\label{martha2}
|N_{H_i}((\cup_{y\in Y}U_y)\cup\{x\})\setminus V(S)|=|N_{H_i}(\cup_{y\in Y}U_y)\setminus V(S)|.
\end{equation}
As, by  \ref{crack5}, $x\notin U_y$ for each $y\in Y$, and $d_{S}(x)<d$, we have
\begin{equation}\label{martha3}
d|(\cup_{y\in Y}U_y)\cup\{x\}|-e_{S_i}((\cup_{y\in Y}U_y)\cup\{x\},A_{3-i})
>
%d|(\cup_{y\in Y}U_y)|+d-e_{S_i}((\cup_{y\in Y}U_y),A_{3-i})-d=
d|\cup_{y\in Y}U_y|-e_{S_i}(\cup_{y\in Y}U_y,A_{3-i}).
\end{equation}
Combining~\eqref{martha1}, \eqref{martha2} and \eqref{martha3}, we have
\[
d|(\cup_{y\in Y}U_y)\cup\{x\}|-e_{S_i}((\cup_{y\in Y}U_y)\cup\{x\},A_{3-i})>|N_{H_i}((\cup_{y\in Y}U_y)\cup\{x\})\setminus V(S)|.
\]
As $|(\cup_{y\in Y}U_y)\cup\{x\}|<m+1\leq 2m$, this contradicts \ref{crack3} in definition of the $(d,m)$-extendability of $(S,X)$ in $(H_1,H_2)$. Thus, some such $y$ as required by the lemma must exist.
\end{proof}
\fi

Applying Lemma~\ref{extendleaf} repeatedly, we can build an extendable copy of a tree, as follows.

\begin{lemma}\label{extendtree} Let $d\geq 2$ and $m\geq 1$. Let $A_1$ and $A_2$ be disjoint sets and let $X\subset A_1$ and $X'\subset X$. Let $T$ be a forest containing $X$, in which each component has exactly one vertex in $X$ and each vertex in $X'$ has degree 0. Let $T_x$, $x\in X'$, be vertex disjoint trees with maximum degree at most $d$ such that $T_x$ contains $x$. Let $T'=\cup_{x\in X'}T_x$.
Let $H_1$ and $H_2$ be bipartite graphs with vertex classes $A_1$ and $A_2$. Suppose $(T,X)$ is $(d,m)$-extendable in $(H_1,H_2)$ and that
\begin{equation}\label{import}
|T|+|T'|\leq \min\{|A_1|,|A_2|\}/2-2dm-1.
\end{equation}
Then, there is a copy $S$ of $T'$ so that each vertex $x\in X'$ is copied to itself, $V(T)\cap V(S)=X'$, and $(T+S,X)$ is $(d,m)$-extendable in $(H_1,H_2)$.
\end{lemma}
\ifproofdone
\else
\begin{proof} We will prove this by induction on $|E(T')|$. If $|E(T')|=0$, then let $S$ be the forest with vertex set $X'$ and no edges, noting this satisfies the lemma as $T+S=T$. Suppose then that $|E(T')|\geq 1$. Pick $x_0\in X'$ with $|E(T_{x_0})|\geq 1$, let $y_0$ be a leaf of $T_{x_0}$ which is not equal to $x_0$ and let $z_0$ be the neighbour of $y_0$ in $T_{x_0}$.
By the induction hypothesis, there is some copy $S'\subset H_1\cup H_2$ of $T'-y_0$ such that $x$ is copied to $x$, for each $x\in X'$, $V(T)\cap V(S')=X'$, and $(T+S',X)$ is $(d,m)$-extendable in $(H_1,H_2)$.

Let $z_0'$ be the copy of $z_0$ in $S'$. Suppose $z_0'\in A_1$, where the case where $z_0'\in A_2$ follows similarly. As $T_{x_0}$ has maximum degree at most $d$, the degree of $z_0$ in $S'$ is at most $d-1$. By \eqref{import},
\[
|T+S'|\leq |T|+|T'|-1\leq \min\{|A_1|,|A_2|\}/2-2dm-2.
\]
Thus, by Lemma~\ref{extendleaf}, there is some vertex $y_0'$ in $A_2\setminus V(T+S')$ so that $z_0'y_0'\in E(H_1)$ and, if $S:=S'+z_0'y_0'$, $(T+S,X)$ is $(d,m)$-extendable in $(H_1,H_2)$. Noting that $V(T)\cap V(S)=V(T)\cap V(S')=X'$, and $S$ is a copy of $T'$ in which each vertex in $X'$ is copied to itself, completes the proof.
\end{proof}
\fi

A critical part of the method used by Glebov, Krivelevich and Johannsen~\cite{RomanPhD} is that not only can we add a leaf and remain $(d,m)$-extendable, but we can also remove a leaf and remain $(d,m)$-extendable, as follows.

\begin{lemma}\label{removeleaf}
Suppose that $H_1$ and $H_2$ are two bipartite graphs with the same vertex classes $A_1$ and $A_2$, and that $S$ is a forest containing $X\subset V(S)\cap A_1$ in which exactly one vertex in each tree in $S$ is in $X$.
Suppose $j\in [2]$, $x\in V(S)\cap A_i$ and $y\in A_{3-j}\setminus V(S)$ so that $(S+xy,X)$ is $(d,m)$-extendable in $(H_1,H_2)$.

Then, $(S,X)$ is $(d,m)$-extendable in $(H_1,H_2)$.
\end{lemma}
\ifproofdone
\else
\begin{proof}
That \ref{crack1}, \ref{crack2} and \ref{crack4} hold for $(S,X)$ to be $(d,m)$-extendable in $(H_1,H_2)$ follows directly from the same conditions for the $(d,m)$-extendability of $(S+xy,X)$, so we need only check \ref{crack3}.

For each $i\in [2]$, let $S_j$ be the subgraph of $S$ with edge set $\{e\in E(S):d(e,X)\equiv i+1\mod 2\}$ and let $S_i'$ be the subgraph of $S+xy$ with edge set $\{e\in E(S+xy):d(e,X)\equiv i+1\mod 2\}$. Note that $S'_j=S_j+xy$ and $S'_{3-j}=S_{3-j}$.

Let $U\subset A_i$ with $0<|U|\leq 2m$. If $x\notin U$, then, as $(S+xy,X)$ is $(d,m)$-extendable in $(H_1,H_2)$,
\begin{align*}
|N_{H_j}(U)\setminus V(S)|&\geq |N_{H_j}(U)\setminus V(S+xy)|
\\
&\geq d|U|-e_{S_j+xy}(U,A_{3-j})
\\
&= d|U|-e_{S_j}(U,A_{3-j}).
\end{align*}
On the other hand, if $x\in U$, then $y\in N_{H_j}(U)$, so that
\begin{align*}
|N_{H_j}(U)\setminus V(S)|&= |N_{H_j}(U)\setminus V(S+xy)|+1
\\
&\geq d|U|-e_{S_j+xy}(U,A_{3-j})+1
\\
&= d|U|-e_{S_j}(U,A_{3-j}).
\end{align*}
Finally, let $U\subset A_{3-j}$ with $0<|U|\leq 2m$, then, as $y\in A_{3-j}$, and $S'_{3-j}=S_{3-j}$,
\begin{align*}
|N_{H_{3-j}}(U)\setminus V(S)| &=|N_{H_{3-j}}(U)\setminus V(S+xy)|\\
&\geq d|U|-e_{S'_{3-j}}(U,A_{3-j})\\
&=d|U|-e_{S_{3-j}}(U,A_{3-j}).
\end{align*}
Thus, \ref{crack3} holds so that $(S,X)$ is $(d,m)$-extendable in $(H_1,H_2)$.
\end{proof}
\fi

\subsection{Path connections}
We will now build paths between vertex sets, using the work in this section so far. The next lemma is the key lemma we use to show weak and strong connectivity in pseudorandom digraphs. We work with two extendable forests $S$ and $T$ in two vertex disjoint pairs of bipartite graphs $(G_1,G_2)$ and $(H_1,H_2)$. Given vertex sets $X'\subset V(S)$ and $Y'\subset V(T)$, we add a $(d-1)$-ary tree disjointly to each vertex in $X'$ and $Y'$ while retaining the respective extendability properties. The resulting trees attached to $X'$ and $Y'$ are, together, very large, and this will allow us to connect two trees from $X'$ and $Y'$ respectively using another set $Z$. This allows us to find a path from a vertex in $X'$ to a vertex in $Y'$ which initially alternates between edges in $G_1$ and $G_2$, then passes through $Z$, before alternating between edges in $H_1$ and $H_2$. The edges in these graphs will be chosen so that this will correspond to a directed path in our initial digraph.
Crucially, we can then use Lemma~\ref{removeleaf} to remove the vertices we added but did not use in this connection, while remaining extendable. This allows us to efficiently repeat the argument to find more connections.

We emphasise again that this is a fairly direct adaptation of the work of Glebov, Krivelevich and Johannsen~\cite{RomanPhD}, only in a form applicable to digraphs.

\begin{lemma}\label{findapath1} Let $m\geq 1$, $d\geq 3$ and $k$ satisfy $k=\lceil \log m/\log (d-1)\rceil$, and let  $0\leq j\leq k$. Let $G$ be a graph containing disjoint vertex sets
 $A_1$, $A_2$, $B_1$, $B_2$ and $Z$. Let $X\subset A_1$ and $Y\subset B_1$. Let $S$ and $T$ be forests so that there is exactly one vertex of $X$ and $Y$ in each component of $S$ and $T$ respectively, and so that
\begin{equation}\label{rest}
|S|,|T|\leq \frac{1}{2}\min\{|A_1|,|A_2|,|B_1|,|B_2|\}-10dm-1.
\end{equation}

Let $G_1$ and $G_2$ be bipartite subgraphs of $G$ with vertex classes $A_1$ and $A_2$ so that $(S,X)$ is $(d,m)$-extendable in $(G_1,G_2)$. Let  $H_1$ and $H_2$ be bipartite subgraphs of $G$ with vertex classes $B_1$ and $B_2$ so that $(T,Y)$ is $(d,m)$-extendable in $(H_1,H_2)$.

For any $U\subset A_1\cup A_2$ or $U\subset B_1\cup B_2$ with $|U|=m$, suppose that $|N_G(U,Z)|> |Z|/2$. Suppose that $X'\subset X$ and $Y'\subset Y$ with  $|X'|,|Y'|\geq m/(d-1)^j$ are sets of vertices with degree $0$ in $S$ and $T$ respectively.
Then, there are some $x\in X'$, $y\in Y'$, $x_0\in A_1\cup A_2$ and $y_0\in B_1\cup B_2$ and paths $P\subset G_1\cup G_2$ and $Q\subset H_1\cup H_2$ such that the following hold.
\begin{itemize}
\item $P$ is an $x,x_0$-path with length at most $j$ and no vertices in $V(S)\setminus\{x\}$, and $(S+P,X)$ is  $(d,m)$-extendable in $(G_1,G_2)$.
\item $Q$ is a $y,y_0$-path with length at most $j$ and no vertices in $V(T)\setminus \{y\}$, and $(T+Q,Y)$ is $(d,m)$-extendable in $(H_1,H_2)$.
\item $N_G(x_0)\cap N_G(y_0)\cap Z\neq \emptyset$.
\end{itemize}
\end{lemma}
\ifproofdone
\else
\begin{proof} By removing vertices from $X'$ and $Y'$ if necessary, assume that $|X'|=|Y'|=\lceil m/(d-1)^j\rceil$. Let $S_x$, $x\in X'$, be a collection of disjoint  $(d-1)$-ary trees with depth $j$ so that $S_x$ has root $x$ for each $x\in X'$, and let $S'=\cup_{x\in X'}S_x$. Let $T_y$, $y\in Y'$, be a collection of disjoint  $(d-1)$-ary trees with depth $j$ so that $T_y$ has root $y$ for each $y\in Y'$, and let $T'=\cup_{y\in Y'}T_y$.

Note that, by \eqref{rest}, $|S'|\leq 4md\leq \min\{|A_1|,|A_2|\}/2-2dm-1-|S|$.
Therefore, by Lemma~\ref{extendtree}, there is a copy $S''$ of $S'$ such that $x$ is copied to $x$ for each $x\in X'$ and $(S+S'',X)$ is $(d,m)$-extendable in $(G_1,G_2)$. Similarly, by Lemma~\ref{extendtree}, there is a copy $T''$ of $T'$ such that $y$ is copied to $y$ for each $y\in Y'$ and $(T+T'',Y)$ is $(d,m)$-extendable in $(H_1,H_2)$.

Noting that $|S''|,|T''|\geq m$, we can find some $x_0\in V(S'')$ and $y_0\in V(T'')$ such that $N_G(x_0)\cap N_G(y_0)\cap Z\neq\emptyset$. Let $x$ be the vertex in $X'$ for which there is a path, $P$ say, in $S''$ with length at most $j$ from $x$ to $x_0$. Note that, by iteratively removing leaves not in $X$, $S+S''$ can be turned into $S+P$. Thus, by repeated application of Lemma~\ref{removeleaf}, we have that $(S+P,X)$ is $(d,m)$-extendable in $(G_1,G_2)$.

Similarly, there is a $y,y_0$-path, $Q$ say,  in $T''$ with length at most $j$, so that $(T+Q,Y)$ is $(d,m)$-extendable in $(H_1,H_2)$. Thus, the vertices $x$, $y$, $x_0$, $y_0$ and paths $P$ and $Q$ satisfy the requirements in the lemma.
\end{proof}
\fi

\subsection{Strong connection in pseudorandom digraphs}
We now use Lemma~\ref{findapath1} to show a strong connection property in pseudorandom digraphs. We first use Lemma~\ref{divisionlemma} to find the sets to which we can then apply Lemma~\ref{findapath1}. In our application, we connect pairs of vertices with paths of length $O(\log n/\lg{2})$.

In the proof of the following theorem, and several times later in the paper, we make various calculations, often at length, to ensure that Lemma~\ref{divisionlemma} can be applied. For the reader who wishes to take these calculations as read, each such calculation begins with `We will now check the conditions', and we proceed after the calculation with a new paragraph continuing `Having checked the appropriate conditions'.

\begin{theorem}\label{shortpathpicky}  For each $\e>0$, there exists $n_0=n_0(\e)$ such that the following holds for every $d\geq 10^{-5}$ and $n\geq n_0$. Let $D$ be an $n$-vertex $(d,\eps)$-pseudorandom digraph containing a set $A$ with $|A|\geq n/\lg{3}$ and in which the following hold with $m=n\log^{[3]} n/d\log n$.
\begin{enumerate}
\item For each $v\in V(D)$, $d^\pm(v,A)\geq d(\log n)^{3/4}$.
\item For each $U\subset V(D)$ with $|U|=m$, $|N^\pm(U,A)|\geq (1/2+\eps/2)|A|$.
\end{enumerate}
Then, any set $V\subset V(D)\setminus A$ with
\[
|V|\leq \frac{|A|\lg{2}}{\log n\cdot \lg{7}}
\]
is strongly connected in $D[A\cup V]$.
\end{theorem}
\ifproofdone
\else
\begin{proof} Let $\alpha=\lfloor |A|/5\rfloor$, $\delta=d(\log n)^{3/4}$ and $\Delta=10^6d\log n$ so that, for each $v\in V(D)$, $d^\pm(v,A)\geq \delta$, and, by \ref{pseud0} in the definition of $(d,\eps)$-pseudorandomness, we have $\Delta^{\pm}(D)\leq\Delta$.

We will now check the conditions \ref{cond0}--\ref{cond2} to apply Lemma~\ref{divisionlemma} with $\delta$, $\Delta$, $m$, $n$ unchanged, $\ell=5$, $a_1=\ldots=a_5=\alpha$ and $\eps/2$ in place of $\eps$.
We have, for \ref{cond0}--\ref{cond2} in turn, that
\[
\frac{\eps^2\alpha^2}{4|A|}=\Omega_\eps(\alpha)=\omega(1),
\]
\[
\frac{\eps^2\alpha^2}{4|A|}\cdot \exp \left(\frac{\alpha\delta}{24|A|}\right)\geq \frac{\eps^2\alpha^2}{4|A|}\cdot\exp \left(\frac{\delta}{200}\right)=
\Omega_\eps\left(\frac{n}{\lg{3}}\cdot\exp\left(\frac{d (\log n)^{3/4}}{200}\right)\right)=\omega(n),
\]
\[
\exp \left(\frac{\alpha\delta}{24|A|}\right)\geq \exp \left(\frac{\delta}{200}\right)=\exp(\Omega(d(\log n)^{3/4}))=\omega(d^2\log^2 n)=\omega(\Delta^2),
\]
and, as $\log (en/m)=O(\log (d\log n))$,
\[
\frac{\eps^2\alpha}{4\cdot 10^3}=\Omega_\eps(\alpha)=\Omega_\eps\left(\frac{n}{\lg{3}}\right)
=\Omega_\eps\left(m\cdot\frac{d\log n}{(\lg{3})^2}\right)=\omega(m\cdot\log(d\log n))=\omega\left(m\cdot \log\left(\frac{en}{m}\right)\right).
\]

Having checked the appropriate conditions, by Lemma~\ref{divisionlemma}, for sufficiently large $n$, there are disjoint vertex sets $A'_1,A_2,B'_1,B_2,Z$ in $A$, each with size $\alpha$, such that
\stepcounter{alabel}
\begin{enumerate}[label = \textbf{\Alph{alabel}\arabic{enumi}}]
\item For each $v\in V(D)$ and $B\in \{A'_1,A_2,B_1',B_2,Z\}$, $d^\pm(v,B)\geq \alpha\delta/4|A|\geq d(\log n)^{3/4}/40$.\label{sigh1}
\item For each $U\subset V(D)$ with $|U|=m$  and $B\in \{A'_1,A_2,B_1',B_2,Z\}$, $|N^\pm(U,B)|\geq (1/2+\eps/4)|B|$.\label{sigh2}
\end{enumerate}

Now, let
\begin{equation}\label{l0defn}
\ell_0:=\frac{|A|\lg{2}}{\log n\cdot \lg{7}}.
\end{equation}
and take an arbitrary set $E=\{\vec{y_ix_i}:i\in [\ell]\}$ of $\ell\leq \ell_0$ vertex disjoint edges in the complete digraph with vertex set $V(D)\setminus A$. To show the lemma, it is sufficient to find a directed cycle in $D[A\cup V(E)]+E$ which contains the edges $\vec{y_1x_1},\ldots,\vec{y_\ell x_\ell}$ in order.

Let $X=\{x_1,\ldots,x_\ell\}$ and $Y=\{y_1,\ldots y_\ell\}$. Let $G_1$ and $G_2$ be the bipartite (undirected) graphs with vertex set $A_1:=A'_1\cup X$ and $A_2$ and the edges in $D$ from $A_1$ to $A_2$ and $A_2$ to $A_1$ respectively, but without their directions. Let $H_1$ and $H_2$ be the bipartite (undirected) graphs with vertex set $B_1:=B'_1\cup Y$ and $B_2$ and the edges in $D$ from $B_2$ to $B_1$ and $B_1$ to $B_2$ respectively, again without their directions.
Let $G$ have vertex set $A\cup X\cup Y$ and consist of the edges in $G_1, G_2, H_1, H_2$ as well as the edges from $A_1\cup A_2$ to $Z$ and $Z$ to $B_1\cup B_2$. Let $d_0=(\log n)^{1/3}$ and note that $d_0m=o(a)$. Let $I_X$ and $I_Y$ respectively be the graphs with vertex set $X$ and $Y$ which each have no edges.

\begin{claim}\label{extendclaim} For sufficiently large $n$,
$(I_X,X)$ is $(d_0,m)$-extendable in $(G_1,G_2)$ and $(I_Y,Y)$ is $(d_0,m)$-extendable in $(H_1,H_2)$.
\end{claim}
\begin{proof}[Proof of Claim~\ref{extendclaim}] We will show that $(I_X,X)$ is $(d_0,m)$-extendable in $(G_1,G_2)$. That $(I_Y,Y)$ is $(d_0,m)$-extendable in $(H_1,H_2)$ follows similarly. Note that \ref{crack1} and \ref{crack2} in Definition~\ref{extendability} are immediate as $I_X$ and $I_Y$ have no edges. Furthermore, from \ref{sigh2}, for each $U\subset A_2$ with $|U|\geq m$, we have
\[
|N_{G_2}(U)\setminus X|=|N_{D}^+(U,A_1\setminus X)|=|N_{D}^+(U,A_1')| \geq (1/2+\eps/4)|A_1'|= (1/2+\eps/4)(|A_1|-|X|)> |A_1|/2,
\]
where we have used that $|X|=\ell =o(a)$.
Thus, as for each $U\subset A_1$ with $|U|\geq m$ we have, from \ref{sigh2}, that $|N_{G_1}(U)|=|N_D^+(U,A_2)|>|A_2|/2$, we have that \ref{crack4} holds.

Let $U\subset A_1$ with $0<|U|\leq 2m$. Each vertex in $U$ has at least $d(\log n)^{3/4}/40\geq d(\log n)^{2/3}$ out-neighbours in $A_2$ in $D$ by \ref{sigh1}, for sufficiently large $n$. Thus, as $A_2\cap X=\emptyset$, by \ref{pseud2} in the definition of $(d,\eps)$-pseudorandomness, we have
\begin{equation}\label{this1}
|N_{G_1}(U,A_2\setminus X)|=|N_{D}^+(U,A_2)|\geq d_0|U|.
\end{equation}
Furthermore, for each $U\subset A_2$ with $0<|U|\leq 2m$, each vertex in $U$ has at least $d(\log n)^{3/4}/40\geq d(\log n)^{2/3}$ out-neighbours in $A_1'$ in $D$ by \ref{sigh1}, for sufficiently large $n$. Thus, by \ref{pseud2} in the definition of $(d,\eps)$-pseudorandomness, we have
\begin{equation}\label{that1}
|N_{G_2}(U,A_1\setminus X)|=|N^+_{D}(U,A_1')|\geq d_0|U|.
\end{equation}
In combination, \eqref{this1} and \eqref{that1} show that \ref{crack3} holds to complete the proof of the $(d_0,m)$-extendability of $(I_X,X)$ in $(G_1,G_2)$.
\end{proof}

Let
\begin{equation}\label{meaculpa}
k=\left\lceil \frac{\log m}{\log (d_0-1)}\right\rceil=O\left(\frac{\log n}{\lg{2}}\right)\overset{\eqref{l0defn}}{=}o\left(\frac{a}{\ell_0}\right).
\end{equation}
Take a maximal set $I\subset [\ell]$ for which there is a vertex disjoint set of paths $P_i\subset G_1\cup G_2$ and $Q_i\subset H_1\cup H_2$, $i\in I$, with length at most $k$ each and distinct vertices $z_i$, $i\in I$, in $Z$ such that the following hold (with addition modulo $\ell$ in the indices).
\stepcounter{alabel}
\begin{enumerate}[label = \textbf{\Alph{alabel}\arabic{enumi}}]
\item For each $i\in I$, $P_i$ is a path with start vertex $x_i$ and no vertices in $X\setminus\{x_i\}$, and $(I_X+\sum_{i\in I}P_i,X)$ is $(d_0,m)$-extendable in $(G_1,G_2)$.\label{other1}
\item For each $i\in I$, $Q_i$ is a path with start vertex $y_{i+1}$ and no vertices in $Y\setminus\{y_{i+1}\}$, and $(I_Y+\sum_{i\in I}Q_i,Y)$ is $(d_0,m)$-extendable in $(H_1,H_2)$.\label{other2}
\item For each $i\in I$, $P_iz_i\cev{Q_i}$ is a directed $x_i,y_{i+1}$-path in $D$.\label{other3}
\end{enumerate}
Note that, by Claim~\ref{extendclaim}, $I=\emptyset$ satisfies \ref{other1}--\ref{other3}, so that such a set $I$ exists.

If $I=[\ell]$, then, by \ref{other3}, $\sum_{i\in I}(y_ix_iP_iz_i\cev{Q}_i)$ is a directed cycle in $E+D[A\cup V(E)]$ containing the edges $y_ix_i$, $i\in I$, in order, as required. Thus, assume for contradiction that there is some $j\in [\ell]\setminus I$.

Let the paths $P_i$ and $Q_i$, and disjoint vertices $z_i$, $i\in I$, satisfy the conditions above. Let $Z'=Z\setminus (\cup_{i\in I}z_i)$, and note that $|Z'|\geq |Z|-\ell$. Thus, by \ref{sigh2}, the definition of $G$, and as $\ell\leq \ell_0=o(a)$, we have the following.
\begin{enumerate}[label = {\bfseries \Alph{alabel}4}]
\item For any $U\subset A_1\cup A_2$ or $U\subset B_1\cup B_2$ with $|U|=m$, we have $|N_G(U,Z')|\geq (1/2+\eps/4)|Z|-\ell>|Z|/2\geq |Z'|/2$.\label{other4}
\end{enumerate}
Let $S=I_X+\sum_{i\in I}P_i$ and $T=I_Y+\sum_{i\in I}Q_i$, so that, by \ref{other1} and \ref{other2}, $(S,X)$ and $(T,Y)$ are $(d_0,m)$-extendable in $(G_1,G_2)$ and $(H_1,H_2)$ respectively. Note that, by \eqref{meaculpa}, as $d_0m=o(a)$, we have
\begin{equation}\label{weird}
|S|,|T|\leq \ell(k+1)=o(a)=o\left(\frac{1}{2}\min\{|A_1|,|A_2|,|B_1|,|B_2|\}-10d_0m-1\right).
\end{equation}
Thus, by \eqref{meaculpa}, \eqref{weird} and \ref{other4}, we can apply Lemma~\ref{findapath1} with $X'=\{x_j\}$ and $Y'=\{y_{j+1}\}$ to get vertices $x'\in A_1\cup A_2$, $y'\in B_1\cup B_2$ and $z_j\in Z'$ and paths $P_j\subset G_1\cup G_2$ and $Q_j\subset H_1\cup H_2$ such that the following hold.
\stepcounter{alabel}
\begin{enumerate}[label = \textbf{\Alph{alabel}\arabic{enumi}}]
\item $P_j$ is an $x_j,x'$-path with length at most $k$ and no vertices in $V(S)\setminus\{x_j\}$, and $(S+P_j,X)$ is  $(d_0,m)$-extendable in $(G_1,G_2)$.\label{berry11}
\item $Q_j$ is a $y_{j+1},y'$-path with length at most $k$ and no vertices in $V(T)\setminus \{y_j\}$, and $(T+Q_j,Y)$ is $(d_0,m)$-extendable in $(H_1,H_2)$.\label{berry12}
\item $x'z_j$, $z_jy'\in E(G)$.\label{berry13}
\end{enumerate}
Note that, by the definition of $(d_0,m)$-extendability, for each $i$, the $i$th edge of $P_j$, counting from $x_j$ is in $G_{2-(i\mod 2)}$. Thus, by the choice of the graphs $G_1$ and $G_2$, $P_j$ is a directed path in $D$. Similarly, $Q_j$ is, when reversed, a directed path in $D$. By \ref{berry13}, and the choice of $G$, then, $P_jz_j\cev{Q_j}$ is a directed $x_jy_{j+1}$-path in $D$.

Then, $P_i$, $Q_i$ and $z_i$, $i\in I\cup \{j\}$, satisfy \ref{other1}--\ref{other3}, contradicting the maximality of $I$.
\end{proof}
\fi

\subsection{Weak connection in pseudorandom digraphs}
We now use Lemma~\ref{findapath1} to connect edges into a cycle using directed paths with, on average, length $O(1)$ (see Theorem~\ref{longpathunpicky}), rather than the (potential) length $\Omega_\eps(\log n/\lg{2})$ used in Theorem~\ref{shortpathpicky}. For this to be feasible we remove the restriction that the original edges appear in the cycle in a specified order -- this is the difference between weak and strong connection (see Definitions~\ref{weakdefn} and~\ref{strongdefn}).

Similarly to the proof of Theorem~\ref{shortpathpicky}, we prove Theorem~\ref{longpathunpicky} by first applying Lemma~\ref{divisionlemma} to find sets before, for any appropriate set of edges $E$, selecting a maximal set of paths satisfying some conditions. Lemma~\ref{findapath1} will then show that we have as many paths as possible, and thus have the cycle we need. However, for discussion it is more convenient to think about finding paths
one-by-one, each time connecting two of the edges in $E$ and moving closer to the cycle. At the start, there are many potential pairs of edges we could connect together, allowing us to apply Lemma~\ref{findapath1} with initially large sets $X'$ and $Y'$. When we have connected most of the edges into a cycle we will need to use longer paths, but we will still have, on average, used short paths.

\begin{theorem}\label{longpathunpicky}   For each $\e>0$, there exists $n_0=n_0(\e)$ such that the following holds for every $d\geq 10^{-5}$ and $n\geq n_0$. Let $D$ be an $n$-vertex $(d,\eps)$-pseudorandom digraph containing disjoint vertex sets $A$ and $V$ so that
\begin{equation}\label{late}
\frac{n\lg{2}}{\log n\cdot\lg{4}}\leq |A|\leq \frac{n(\lg{2})^3}{\log n}
\end{equation}
 and $|V|\leq |A|/\lg{6}$, and the following hold with $m=n\log^{[3]} n/d\log n$.
\begin{enumerate}
\item For each $v\in A\cup V$, $d^{\pm}(v,A)\geq 40 d\lg{2}/\lg{4}$ and $d^\pm(v,A\cup V)\leq d(\lg{2})^3$.
\item For each $U\subset A\cup V$ with $|U|=m$, $|N^\pm(U,A)|\geq (1/2+\eps/8)|A|$.
\end{enumerate}
Then, $V$ is weakly connected in $D[A\cup V]$.
\end{theorem}
\ifproofdone
\else
\begin{proof}
 Let $\alpha=\lfloor |A|/5\rfloor$, $\delta=40d\lg{2}/\lg{4}$, $\Delta=d(\lg{2})^3$ and $n'=|A\cup V|\leq 6a$.

We will now check the conditions \ref{cond0}--\ref{cond2} to apply Lemma~\ref{divisionlemma} to $D[A\cup V]$ with $n'$ in place of $n$, $\delta$, $\Delta$, $m$ unchanged, $\ell=5$, $a_1=\ldots=a_5=\alpha$ and $\eps/8$ in place of $\eps$.
For sufficiently large $n$, we have for \ref{cond0}--\ref{cond3} in turn that
\[
\left(\frac{\eps}{8}\right)^2\cdot \frac{\alpha^2}{|A|}=\Omega_\eps(\alpha)=\omega(1),
\]
\[
\left(\frac{\eps}{8}\right)^2\cdot \frac{\alpha^2}{|A|}\cdot\exp \left(\frac{\delta \alpha}{24|A|}\right)
=\Omega_\eps\left(\alpha\cdot\exp\left(\frac{d \lg{2}}{5\lg{4}}\right)\right)=\omega(\alpha)=\omega(n'),
\]
and
\[
\exp\left(\frac{\delta \alpha}{24|A|}\right)=\exp\left(\Omega\left(\frac{d\lg{2}}{\lg{4}}\right)\right)=\omega(d^2(\lg{2})^6 )=\omega(\Delta^2).
\]
Using~\eqref{late}, for \ref{cond2}, note that, as $n'\leq 6\alpha\leq 6n(\lg{2})^3/\log n$, $\log(e n'/m)=O(\log(d\lg{2}))$, and hence
\[
\left(\frac{\eps}{8}\right)^2\cdot \frac{\alpha}{10^3}=\Omega_\eps(\alpha)\overset{\eqref{late}}{=}\Omega_\eps\left(\frac{n\lg{2}}{\log n\cdot\lg{4}}\right)
=\Omega_\eps\left(m\cdot\frac{d\lg{2}}{\lg{3}\cdot\lg{4}}\right)=\omega\left(m\cdot \log\left(\frac{en'}{m}\right)\right).
\]

Having checked the appropriate conditions, by Lemma~\ref{divisionlemma}, for sufficiently large $n$, there are disjoint vertex sets $A'_1,A_2,B'_1,B_2,Z$ in $A$, each with size $\alpha$, such that
\stepcounter{alabel}
\begin{enumerate}[label = \textbf{\Alph{alabel}\arabic{enumi}}]
\item For each $v\in V(D)$ and $B\in \{A'_1,A_2,B_1',B_2,Z\}$, $d^\pm(v,B)\geq d\lg{2}/\lg{4}$.\label{unsigh1}
\item For each $U\subset V(D)$ with $|U|=m$  and $B\in \{A'_1,A_2,B_1',B_2,Z\}$, $|N^\pm(U,B)|\geq (1/2+\eps/16)|B|$.\label{unsigh2}
\end{enumerate}

Now, take an arbitrary set $E=\{\vec{y_ix_i}:i\in [\ell]\}$ of $\ell\leq |V|/2=o(a)$ vertex disjoint edges in the complete digraph with vertex set $V$. To show the lemma, it is sufficient to find a directed cycle in $D[A\cup V(E)]+E$ which contains the edges $\vec{y_1x_1},\ldots,\vec{y_\ell x_\ell}$ in any order.

Let $X=\{x_1,\ldots,x_\ell\}$ and $Y=\{y_1,\ldots y_\ell\}$. Let $G_1$ and $G_2$ be the bipartite (undirected) graphs with vertex set $A_1:=A'_1\cup X$ and $A_2$ and the edges in $D$ from $A_1$ to $A_2$ and $A_2$ to $A_1$ respectively, but without their directions. Let $H_1$ and $H_2$ be the bipartite (undirected) graphs with vertex set $B_1:=B'_1\cup Y$ and $B_2$ and the edges in $D$ from $B_2$ to $B_1$ and $B_1$ to $B_2$ respectively.
Let $G$ have vertex set $A\cup X\cup Y$ and consist of the edges in $G_1, G_2, H_1, H_2$ as well as the edges from $A_1\cup A_2$ to $Z$ and $Z$ to $B_1\cup B_2$, again without their directions.  Let $I_X$ and $I_Y$ respectively be the graphs with vertex set $X$ and $Y$ which each have no edges. Let $d_0=10$.

\begin{claim}\label{rain} For sufficiently large $n$,
$(I_X,X)$ is $(d_0,m)$-extendable in $(G_1,G_2)$ and $(I_Y,Y)$ is $(d_0,m)$-extendable in $(H_1,H_2)$.
\end{claim}
\begin{proof}[Proof of Claim~\ref{rain}] We will show that $(I_X,X)$ is $(d_0,m)$-extendable in $(G_1,G_2)$. That $(I_Y,Y)$ is $(d_0,m)$-extendable in $(H_1,H_2)$ follows similarly. Note that \ref{crack1} and \ref{crack2} in Definition~\ref{extendability} hold as $I_X$ and $I_Y$ have no edges. Furthermore, from \ref{unsigh2}, we have, for each $U\subset A_2$ with $|U|\geq m$, that
\[
|N_{G_2}(U,A_1)\setminus X|=|N_{D}^+(U,A_1')| \geq (1/2+\eps/16)|A_1'|> |A_1|/2,
\]
where we have used that $|X|=\ell=o(a)$.
Thus, as, for each $U\subset A_1$ with $|U|\geq m$, we have, from \ref{unsigh2}, that $|N_{G_1}(U,A_2)|=|N_{D}^+(U,A_2)|>|A_2|/2$, we have that \ref{crack4} holds.

Let $U\subset A_1$ with $0<|U|\leq 2m$. By \ref{unsigh1}, for sufficiently large $n$, each vertex in $U$ has at least $d\lg{2}/\lg{4}$ out-neighbours in $A_2$ in $D$.
Thus, as $A_2\cap X=\emptyset$, by \ref{pseud1} in the definition of $(d,\eps)$-pseudorandomness, we have
\begin{equation}\label{this12}
|N_{G_1}(U,A_2\setminus X)|=|N_{D}^+(U,A_2)|\geq d_0|U|.
\end{equation}
Furthermore, for sufficiently large $n$, for each $U\subset A_2$ with $0<|U|\leq 2m$, each vertex in $U$ has at least $d\lg{2}/\lg{4}$ out-neighbours in $A_1'$ in $D$ by \ref{unsigh1}. Thus, by \ref{pseud1} in the definition of $(d,\eps)$-pseudorandomness, we have
\begin{equation}\label{that12}
|N_{G_2}(U,A_1\setminus X)|=|N_{D}^+(U,A_1')|\geq d_0|U|.
\end{equation}
In combination, \eqref{this12} and \eqref{that12} show that \ref{crack3} holds to complete the proof of the $(d_0,m)$-extendability of $(I_X,X)$ in $(G_1,G_2)$.
\end{proof}

We will now cover the edges $\vec{y_ix_i}$, $i\in [\ell]$, using as few directed paths as possible, subject to some conditions (\ref{cat1}--\ref{cat4} below). We will then use Lemma~\ref{findapath1} to show that in fact we have one directed path. Applying Lemma~\ref{findapath1} again will then allow us to complete this path into a cycle.

To govern the length of the covering paths, we use a function $g$ defined as follows. For each $r\in [\ell]$,
let
\begin{equation}\label{gdefn}
g(r)=\sum_{i=1}^{r}\left\lceil\log \left.\left(\frac{4m}{\ell+1-i}\right)\right/\log (d_0-1)\right\rceil.
\end{equation}
As before, let $I_X$ and $I_Y$ be the graphs with no edges and vertex sets $X$ and $Y$ respectively.

Now, for the smallest possible $r\in [\ell]$, find vertex disjoint directed paths $R_i$, $i\in [r]$, in $D[A\cup V(E)]+E$ satisfying the following properties.
\stepcounter{alabel}
\begin{enumerate}[label = \textbf{\Alph{alabel}\arabic{enumi}}]
\item Each edge $\vec{y_ix_i}$ appears in some path $R_j$, $j\in [r]$.\label{cat1}
\item In total, the paths $R_i$, $i\in [r]$, have length at most $\ell+4\cdot g(\ell-r)$ and contain at most $\ell-r$ vertices in $Z$.\label{cat2}
\item Each path $R_i$, $i\in [r]$, starts with some vertex $y_j$ and ends with some vertex $x_{j'}$.\label{cat3}
\item Letting $P$ and $Q$ be the graphs of the edges in the paths $R_i$ which appear (without their directions) in $G_1\cup G_2$ and $H_1\cup H_2$ respectively, $(I_X+P,X)$ is $(d_0,m)$-extendable in $(G_1,G_2)$ and $(I_Y+Q,Y)$ is $(d_0,m)$-extendable in $(H_1,H_2)$. \label{cat4}
\end{enumerate}
Note that the $\ell$ paths consisting of just the edges $\vec{y_ix_i}$, $i\in [\ell]$, satisfy these properties, so such an $r$ and such paths $R_i$, $i\in [r]$, exist.

We will show, by contradiction, that $r=1$. Let us assume then that $r\geq 2$. Let $r'=\lfloor r/2\rfloor\geq 1$. Let $X'$ be a set of $r'$ end vertices of some of the paths $R_i$, $i\in [r]$, and let $Y'$ be a set of $r'$ start vertices of some of the paths $R_i$, $i\in [r]$, so that no path $R_i$, $i\in [r]$, has a vertex in both $X'$ and $Y'$. This is possible as $2r'\leq r$.
Note that, by \ref{cat3}, $X'\subset X$ and $Y'\subset Y$. Furthermore, each vertex in $X'$ appears only in some edge in $E$ in the paths $R_i$, $i\in [r]$, and therefore has degree 0 in $I_X+P$. Similarly, each vertex in $Y'$ has degree 0 in $I_Y+Q$.

We will apply Lemma~\ref{findapath1} to $I_X+P$ and $I_Y+Q$, so we need a bound on their size, which we get from the following claim.

\begin{claim}\label{overdue} We have
\[
g(\ell)=\sum_{i=1}^\ell\left\lceil\log \left.\left(\frac{4m}{\ell+1-i}\right)\right/\log (d_0-1)\right\rceil=o(a).
\]
\end{claim}
\begin{proof}[Proof of Claim~\ref{overdue}]
For any positive integer $s\geq 2$ and each $i\in [\ell]$, if $s=\left\lceil\log \left.\left(4m/(\ell+1-i)\right)\right/\log (d_0-1)\right\rceil$, then
\[
(d_0-1)^{s-1}< \frac{4m}{\ell+1-i}\leq (d_0-1)^{s},
\]
and thus
\[
\ell+1-\frac{4m}{(d_0-1)^{s-1}}<i\leq \ell+1-\frac{4m}{(d_0-1)^{s}}.
\]
Certainly, there are at most $1+4m/(d_0-1)^{s-1}$ integers $i$ which satisfy this.

Thus, we have, as $\ell\leq |V|/2\leq |A|/\lg{6}=o(a)$, that
\begin{align*}
g(\ell)\overset{\eqref{gdefn}}{=}\sum_{i=1}^\ell\left\lceil\left.\log \left(\frac{4m}{\ell+1-i}\right)\right/\log (d_0-1)\right\rceil
&\leq \ell+\sum_{s=2}^{\lceil\log (4m)/\log (d_0-1)\rceil}\left(1+\frac{4m}{(d_0-1)^{s-1}}\right)\cdot s \\
&\leq \ell+\left\lceil\frac{\log(4m)}{\log(d_0-1)}\right\rceil^2+ 4m\cdot \sum_{s=2}^{\infty}\frac{s}{(d_0-1)^{s-1}}\\
&\leq \ell+\log^2n+4m\cdot O(1)=o(a).\hfill\qedhere
\end{align*}
\end{proof}

As $\ell=o(a)$, by Claim~\ref{overdue} and \ref{cat2}, we have
\[
|I_X+P|+|I_Y+Q|\leq 3\ell+\sum_{i=1}^r 4\left\lceil\log \left.\left(\frac{4m}{\ell+1-i}\right)\right/\log (d_0-1)\right\rceil=o(a).
\]

Let $Z'=Z\setminus (\cup_{i\in [r]}V(R_i))$, noting that, by \ref{cat2}, \ref{unsigh2} and the definition of $G$, and as $\ell=o(a)$, we have, for sufficiently large $n$, that
\begin{enumerate}[label = \bfseries \ref{unsigh2}']
\item For each $U\subset A_1\cup A_2$ and $U\subset B_1\cup B_2$ with $|U|=m$, $|N_G(U,Z')|\geq (1/2+\eps/16)|Z|-\ell>|Z'|/2$.\label{newun}
\end{enumerate}

Let $k'=\lceil\log(4m/r)/\log(d_0-1)\rceil$, so that
\begin{equation}\label{conan}
g(\ell-r+1)=g(\ell-r)+k',
\end{equation}
and $|X'|=|Y'|=r'\geq r/2 \geq 2m/(d_0-1)^{k'}$.
Thus, as $d_0m=o(a)$ by \eqref{late}, by Lemma~\ref{findapath1}, there are some $j,j'\in [\ell]$, vertices $x'\in A_1\cup A_2$ and $y'\in B_1\cup B_2$, and paths $P'\subset G_1\cup G_2$ and $Q'\subset H_1\cup H_2$ such that the following hold.
\stepcounter{alabel}
\begin{enumerate}[label = \textbf{\Alph{alabel}\arabic{enumi}}]
\item $x_j\in X'$ and $y_{j'}\in Y'$.\label{bear0}
\item $P'$ is an $x_j,x'$-path with length at most $k'$, no vertices in $(X\cup V(P))\setminus\{x_j\}$, and for which $(I_X+P+P',X)$ is  $(d_0,m)$-extendable in $(G_1,G_2)$.\label{bear1}
\item $Q'$ is a $y_{j'},y'$-path with length at most $k'$ and no vertices in $(Y\cup V(Q))\setminus \{y_{j'}\}$, and $(I_Y+Q+Q',Y)$ is $(d_0,m)$-extendable in $(H_1,H_2)$.\label{bear2}
\item $N_G(x')\cap N_G(y')\cap Z'\neq \emptyset$.\label{bear3}
\end{enumerate}
Note that, by definition of the $(d_0,m)$-extendability, the $i$th edge of $P'$, counting from $x_j$ is in $G_{2-(i\mod 2)}$. Thus, by the choice of the graphs $G_1$ and $G_2$, $P'$ is a directed $x_j,x'$-path in $D$. Similarly, $Q'$ is, when reversed, a directed $y_{j'},y'$-path in $D$. Using \ref{bear3}, and noting that, by the choice of $G$,
\[
N_G(x')\cap N_G(y')\cap Z'=N^+_{D}(x')\cap N^-_D(y')\cap Z',
\]
select a vertex $z\in Z'$ such that $P'z\cev{Q'}$ is a directed $x_jy_{j'}$-path in $D$.

Note that, by \ref{bear0} and the choice of $X'$ and $Y'$, $x_j$ is the end vertex of a path different to the path of which $y_{j'}$ is the start vertex. Assume, then, by relabelling if necessary, that $x_j$ is the end vertex of $R_{r-1}$ and $y_{j'}$ is the start vertex of $R_{r}$. We will show that the $r-1$ paths $R_i'=R_i$, $i\in [r-2]$, and $R'_{r-1}=R_{r-1}P'z\cev{Q'}R_{r}$ satisfy \ref{cat1}--\ref{cat4}, contradicting the definition of $r$.

By \ref{bear1} and \ref{bear2}, the choice of $Z'$, and as the paths $R_i$, $i\in [r]$, are vertex disjoint, the paths $R_i'$, $i\in [r-1]$, are vertex disjoint. By \ref{cat1} for the paths $R_i$, $i\in [r]$, and as the paths $R'_i$ contain the paths $R_i$, \ref{cat1} holds for the paths $R'_i$, $i\in [r-1]$.
Each path $R_i'$ shares a start vertex with some path $R_{i'}$ and an end vertex with some (potentially different) path $R_{i''}$, and therefore, as \ref{cat3} holds for the paths $R_i$, $i\in [r]$, \ref{cat3} holds for the paths $R_i'$, $i\in [r-1]$.

Note that $P+P'$ and $Q+Q'$ are  exactly the graphs of edges in the paths $R_i'$ which appear (without direction) in $G_1\cup G_2$ and $H_1\cup H_2$ respectively. Thus, \ref{cat4} holds for the paths $R_i'$, $i\in [r-1]$, by \ref{bear1} and \ref{bear2}.

The paths $R_i'$, $i\in [r-1]$, contain one additional vertex in $Z$ compared to the paths $R_i$, $i\in [r]$, so that, in total, they have at most $\ell-(r-1)$ vertices in $Z$ by \ref{cat2} for the paths $R_i$. As $P'$ and $Q'$ have length at most $k'$, we have, by \ref{cat2} for the paths $R_i$ again, that the paths $R_i'$ have total length at most
\[
\ell+4g(\ell-r)+2k'+2\leq \ell+4g(\ell-r)+4k'\overset{\eqref{conan}}\leq \ell+4g(\ell-r+1).
\]
Therefore, \ref{cat2} holds for the paths $R_i'$, $i\in [r-1]$. This completes the proof that  \ref{cat1}--\ref{cat4} hold for the paths $R_i'$, $i\in [r-1]$, contradicting the choice of $r$.

Therefore, we have that $r=1$. That is, with relabelling, there is a single path $R$ in $D[A\cup V(E)]+E$ containing each edge in $E$, with start vertex $y_1$ and end vertex $x_2$, with length at most $o(a)$ (using Claim~\ref{overdue}) and at most $\ell$ vertices in $Z$ such that the following holds. If $R'$ and $R''$ are the graphs of the edges in the path $R$ which appear (without direction) in $G_1\cup G_2$ and $H_1\cup H_2$ respectively, then $(I_X+R',X)$ and $(I_Y+R'',Y)$ are $(d_0,m)$-extendable in $(G_1,G_2)$ and $(H_1,H_2)$ respectively.

Let $Z''=Z\setminus V(R)$, and note that, as $|Z\cap V(R)|\leq \ell=o(a)$, for sufficiently large $n$, by \ref{unsigh2}, we have that
\begin{enumerate}[label = \bfseries \ref{unsigh2}'']
\item For each $U\subset A_1\cup A_2$ and $U\subset B_1\cup B_2$ with $|U|=m$, $|N_G(U,Z'')|\geq (1/2+\eps/16)|Z|-\ell>|Z''|/2$.\label{newun2}
\end{enumerate}

Note that, $R'$ is a collection of paths in $G_1\cup G_2$, where each path contains exactly one vertex in $X$, which is furthermore one of its endpoints. The other endpoint of such a path has an in- or out-neighbour in $Z''$ in $R$. Similarly, $R''$ is a collection of paths in $H_1\cup H_2$, where each path contains exactly one vertex in $Y$, which is one of its endpoints. The other endpoint of such a path has an in- or out-neighbour in $Z''$ in $R$.

Now, $R$ has the start vertex $y_1$ and contains the edge $\vec{y_1x_1}$, so $y_1$ is not in $R''$. Similarly, $R$ has the end vertex $x_2$ and contains the edge $\vec{y_2x_2}$, so $x_2$ is not in $R$. Therefore $x_2$ and $y_1$ have degree 0 in $I_X+R'$ and $I_Y+R''$, respectively.

Let $k=\left\lceil\log (2m)/\log (d_0-1)\right\rceil$, so that $m/(d_0-1)^k\leq 1$. By Claim~\ref{overdue}, \ref{cat2} and \ref{cat4}, and as $d_0m=o(a)$, for sufficiently large $n$, by Lemma~\ref{findapath1} applied with $X'=\{x_2\}$ and $Y'=\{y_1\}$, there are vertices $x''\in A_1\cup A_2$ and $y''\in B_1\cup B_2$
and paths $P''\subset G_1\cup G_2$ and $Q''\subset H_1\cup H_2$ such that the following hold.
\stepcounter{alabel}
\begin{enumerate}[label = \textbf{\Alph{alabel}\arabic{enumi}}]
\item $P''$ is an $x_2,x''$-path with length at most $k$ and no vertices in $V(R)\setminus\{x_2\}$, and for which $(I_X+R'+P'',X)$ is  $(d_0,m)$-extendable in $(G_1,G_2)$.\label{berry1}
\item $Q''$ is a $y_{1},y''$-path with length at most $k$ and no vertices in $V(R)\setminus \{y_1\}$, and $(I_Y+R''+Q'',Y)$ is $(d_0,m)$-extendable in $(H_1,H_2)$.\label{berry2}
\item $N_G(x'')\cap N_G(y'')\cap Z'\neq \emptyset$.\label{berry3}
\end{enumerate}
Note that, by the definition of $(d_0,m)$-extendability, the $i$th edge of $P''$, counting from $x_2$ is in $G_{2-(i\mod 2)}$. Thus, by the choice of the graphs $G_1$ and $G_2$, $P''$ is a directed $x_2,x''$-path in $D$. Similarly, $Q''$ is, when reversed, a directed $y'',y_1$-path in $D$. Using \ref{berry3}, and noting that, by the choice of $G$,
\[
N_G(x'')\cap N_G(y'')\cap Z''=N^+_{D}(x'')\cap N^-_D(y'')\cap Z'',
\]
select a vertex $z'\in Z''$ such that $P''z'\cev{Q''}$ is a directed $x_2y_1$-path in $D[A\cup V]$.

Therefore, $RP''z'\cev{Q''}$ is a directed cycle in $D[A\cup V]+E$ containing each edge in $E$, as required.
\end{proof}
\fi

%\end{comment}

%%%%%%%%%%%%%%%%%%%%%%%%%%%%%%%%%%%%%%%%%%%%%%%%%%%%%%%%%%%%%%%%%%%%%%%%%%%%%%%%%%%%%%%%%%%%%%%%%%%%%%%%%%%%%%%%%%%%%%%%%%%
%%%%%%%%%%%%%%%%%%%%%%%%%%%%%%%%%%%%%%%%%%%%%%%%%%%%%%%%%%%%%%%%%%%%%%%%%%%%%%%%%%%%%%%%%%%%%%%%%%%%%%%%%%%%%%%%%%%%%%%%%%%

\section{Covering vertices with directed paths}\label{seccover}
 To cover most of the vertices with few directed paths in a pseudorandom digraph, we divide the vertex set into random sets using the local lemma and then find matchings between the sets. It would be nice to do this in one application of  Lemma~\ref{divisionlemma}, however this is not possible. Essentially, this would attempt to track the in- and out-degrees of all vertices into each random set, which is too much for our methods. To see this, consider the following. We wish to cover at least $n/2$ vertices in a pseudorandom digraph $D$ using paths of length $k=\Theta(\log n\cdot \log^{[5]}n/\log^{[2]}n)$, and therefore we take sets with size $\Theta(n/k)$ and find matchings between them. Looking at~\ref{cond3}, to apply Lemma~\ref{divisionlemma} to $V(D)$ in $D$ to get a partition including a set of size $\Theta(n/k)$, we need to have $\exp(\Theta(\delta(D)/k))\geq (\Delta(D))^2$. However, if $D$ is $(d,\eps)$-pseudorandom with $d=1$ (as we must consider), so that $\delta(D)=\Theta(\log n)$ and $\Delta(D)=\Theta(\log n)$, this corresponds to $\exp(\Theta(\log^{[2]}n/\log^{[5]}n))\geq \Theta(\log^2n)$. Thus, we cannot apply Lemma~\ref{divisionlemma} in this manner. There is also a very similar issue that the number of sets in the partition will be too large for \ref{cond3} if we take $k$ sets of such a size.

 However, these calculations are not far from working. Considering still $d=1$, the calculations above suggest we could use Lemma~\ref{divisionlemma} to split the digraph into sets with size $\Theta(n\lg{2}/k)$, as then $\exp(\Theta(\delta(D)\cdot \lg{2}/k))\geq (\Delta(D))^2$ can hold. By that lemma, this would give sets whose induced digraph in $D$ has maximum degree at most $\Theta(\Delta(D)\cdot \lg{2}/k)=O(\lg{2})$ and minimum degree at least $\Theta(\delta(D)\cdot \lg{2}/k)=\Theta((\lg{2})^2/\lg{5})$.
If we then apply Lemma~\ref{divisionlemma} to such an induced digraph, some $D'$ say, to divide into $\lg{2}$ different sets, \ref{cond3} requires that $\exp(\Theta(\delta(D')/\lg{2}))\geq (\Delta(D'))^2$, which now (rather easily) does hold. Doing this to the digraph induced on each set from the first application of Lemma~\ref{divisionlemma} then gives $k$ sets. Dividing in two stages also solves the similar issue with the number of sets in the partition being too large.

This double application lies behind our proof of Lemma~\ref{initialpaths}, though the proof is (only slightly) more complicated as for each vertex we need to track its in- and out-degree into the sets we want to match it with.
%We therefore apply  Lemma~\ref{divisionlemma} in two rounds. First, we divide the digraph into medium-sized sets. Next, we take smaller subdigraphs and apply Lemma~\ref{divisionlemma} to them, so that we track the in- and out-degrees of fewer vertices in each division. (The subdigraph has stronger maximum degree conditions, so that~\ref{cond3} will hold.) This divides the medium-sized sets into the size we want.

\begin{lemma}\label{initialpaths} For each $\e>0$, there exists $n_0=n_0(\e)$ such that the following holds for every $d\geq 10^{-5}$ and $n\geq n_0$. Suppose an $n$-vertex $(d,\e)$-pseudorandom digraph $D$ contains a set $B$ with $|B|\geq n/2$ such that the following hold with $m=n\lg{3}/d\log n$.
\stepcounter{alabel}
\begin{enumerate}[label = \textbf{\Alph{alabel}\arabic{enumi}}]
\item For each $v\in V(D)$, $d^\pm(v,B)\geq d\log n/8$.\label{das}
\item For each $U\subset V(D)$ with $|U|=m$, $|N^\pm(U,B)|\geq (1/2+\eps/2)|B|$.\label{dasother}
\end{enumerate}
Then, there is a collection of at most $n\lg{2}/(20\log n\cdot\lg{5})$ directed paths (with single vertices permitted) which partition $B$, so that any vertex $v\in V(D)$ has at most $d(\lg{2})^2$ in- or out-neighbours among their start and end vertices.
\end{lemma}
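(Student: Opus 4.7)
The plan is to apply Lemma~\ref{divisionlemma} in two rounds to produce a ``grid'' partition of $B$ into small cells, then find matchings between consecutive cells and chain them into directed paths. The parameters $\ell_1,\ell_2$ will be chosen with $\ell_1$ of order $d\log n/\lg{2}$ and $\ell_2$ of order $\lg{5}/d$, so that $\ell_1\ell_2$ is of order $\log n\cdot\lg{5}/\lg{2}$ and the number of paths $|B|/(\ell_1\ell_2)$ fits within the required budget $n\lg{2}/(20\log n\cdot\lg{5})$ with slack.

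First, I would apply Lemma~\ref{divisionlemma} to $D$ with $A=B$, $\delta=d\log n/8$ (by \ref{das}) and $\Delta=10^6 d\log n$ (by \ref{pseud0}), with $\ell=\ell_1$ and essentially equal target sizes. The bound $\ell_1=O(d\log n/\lg{2})$ is essentially forced by condition~\ref{cond3}; conditions \ref{cond0}--\ref{cond2} hold with great slack. The output is a partition of (almost all of) $B$ into $B_1,\dots,B_{\ell_1}$ of size $\approx|B|/\ell_1$, with every $v\in V(D)$ satisfying $d^{\pm}(v,B_i)$ of order $\lg{2}$ (by \ref{finalprop1}) and every size-$m$ set $U$ satisfying $|N^{\pm}(U,B_i)|\geq(1/2+\eps/4)|B_i|$ (by \ref{finalprop2}).

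Second, for each $i$ I would apply Lemma~\ref{divisionlemma} \emph{to the induced subdigraph} $D[B_{i-1}\cup B_i\cup B_{i+1}]$ (indices modulo $\ell_1$) with $A=B_i$, $\ell=\ell_2$, and essentially equal target sizes. Passing to this subdigraph is the central trick: by \ref{finalprop1} of the first round, its maximum in/out-degree is only $O(\lg{2})$ instead of $O(d\log n)$, and this reduction is exactly what makes condition \ref{cond3} holds in the second round. It ultimately reduces to $\exp(\Omega(d\lg{2}/\lg{5}))\geq\mathrm{polylog}(n)$, which is true since $\lg{2}/\lg{5}\to\infty$ much faster than $\lg{2}$. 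This produces, for each $i$, a partition of $B_i$ into cells $B_i^{1},\dots,B_i^{\ell_2}$ of size $\approx|B|/(\ell_1\ell_2)$, with $d^{\pm}(v,B_i^{j})\geq d\lg{2}/(C\lg{5})\geq d\lg{2}/\lg{4}$ for $n$ large (using $\lg{4}\gg\lg{5}$), and a size-$m$ expansion density of at least $1/2+\eps/8$ into each cell.

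Next, for each pair of consecutive cells---$B_i^{j}$ and $B_i^{j+1}$ for $j<\ell_2$, and $B_i^{\ell_2}$ and $B_{i+1}^{1}$ for $i<\ell_1$---I would find a matching into the target cell using Proposition~\ref{matchymatchy}. Hall's condition splits into two ranges: for $U$ with $|U|\leq 2m$, the degree lower bound and \ref{pseud1} give $|N^{+}(U)\cap B_i^{j+1}|\geq 10|U|$; for $m\leq|U|\leq|B_i^{j}|/2$, \ref{finalprop2} gives $|N^{+}(U)\cap B_i^{j+1}|\geq(1/2+\eps/8)|B_i^{j+1}|\geq|U|$. Chaining these matchings yields about $|B|/(\ell_1\ell_2)$ vertex-disjoint directed paths, each starting in $B_1^{1}$ and ending in $B_{\ell_1}^{\ell_2}$. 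Any vertices of $B$ not covered by the grid (a slack of $O(\ell_1\ell_2)$ vertices arising from the two applications of Lemma~\ref{divisionlemma}) are then inserted into existing paths by a standard Hall-type argument: each uncovered $v$ has $\Omega(d\log n)$ in- and out-neighbours, supplying plentiful consecutive pairs $(x,y)$ on the paths with $\vec{xv},\vec{vy}\in E(D)$, and these slots are matched bijectively to the uncovered vertices, preserving the start and end sets $B_1^{1}$ and $B_{\ell_1}^{\ell_2}$. The endpoint degree bound is then immediate from \ref{finalprop1} of the second round: every $v\in V(D)$ has at most $O(d\lg{2}/\lg{5})\leq d(\lg{2})^2$ in- or out-neighbours in $B_1^{1}\cup B_{\ell_1}^{\ell_2}$. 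The main obstacle throughout is balancing $\ell_1,\ell_2$ so that condition \ref{cond3} holds in both rounds simultaneously; the induced-subdigraph trick in the second round, by cutting the ambient maximum degree by a factor of $\ell_1$, is precisely what makes this feasible.
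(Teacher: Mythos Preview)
Your overall strategy—two rounds of Lemma~\ref{divisionlemma}, with the second applied to the induced subdigraph on three consecutive first-round blocks in order to drop the ambient maximum degree, then matchings between consecutive cells chained into paths—is exactly the paper's approach. However, your parameters are wrong, and as written the argument fails for large $d$.

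The problem is the $d$-dependence you build into $\ell_1,\ell_2$. With $\ell_1=\Theta(d\log n/\lg{2})$ and equal block sizes, the exponent in condition~\ref{cond3} of round one is $a_i\delta/(24a)=\delta/(24\ell_1)=\Theta(\lg{2})$, independent of $d$, so the left side is only $(\log n)^{\Theta(1)}$; but the right side is $\Theta(\ell_1\Delta^2)=\Theta(d^3\log^3 n/\lg{2})$, which for large $d$ (and the lemma must cover $d$ up to order $n/\log n$) is polynomial in $n$ and cannot be beaten by any fixed power of $\log n$. Separately, $\ell_1>\log n$ once $d>\lg{2}$, violating the hypothesis $\ell\le\log n$ of Lemma~\ref{divisionlemma}, and $\ell_2=\Theta(\lg{5}/d)<1$ once $d>\lg{5}$. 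The paper's fix is to take $d$-independent counts: first-round blocks of size $\Theta(\ell\lg{2})$ with $\ell=\Theta(n\lg{2}/(\log n\cdot\lg{5}))$, so there are $r=o(\log n)$ of them, and then each is split into $\Theta(\lg{2})$ cells. With these sizes the round-one exponent becomes $a_i\delta/(24a)=\omega(d\lg{2})$, which dominates $\log(\ell\Delta^2)=O(\log d+\lg{2})$ uniformly for all $d\ge 10^{-5}$.

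Two smaller issues. The endpoint degree bound cannot come from round two as you claim: that round is applied inside $D[B_{i-1}\cup B_i\cup B_{i+1}]$, so its conclusion~\ref{finalprop1} only controls vertices in that three-block window, not all $v\in V(D)$. The paper lets the endpoint set be (contained in) two entire first-round blocks and bounds degrees there via round one, which does apply to every vertex of $D$. And your insertion of leftover vertices by a Hall-type argument is both unnecessary and not clearly justified: you would need, for each leftover $v$, many \emph{consecutive} pairs $(x,y)$ along the constructed paths with $\vec{xv},\vec{vy}\in E(D)$, and nothing in the hypotheses guarantees this. The paper simply takes the at most $\ell$ leftover vertices—all lying in the first block $B_1$—as single-vertex paths, which the lemma statement explicitly permits; this costs only a factor of two in the path count and keeps all start and end vertices inside $B_1\cup B_r$.
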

\ifproofdone
\else
\begin{proof}
Let $\ell=n\lg{2}/(50\log n\cdot\lg{5})$ and $k=\lfloor |B|/\ell\rfloor$. Noting that $k=\Theta(\log n\cdot\lg{5}/\lg{2})$, take integers $r$ and $k_1,\ldots,k_r$ such that $\lg{2}\leq k_i\leq 2\lg{2}$ and $\sum_{i\in [r]}k_i=k$, and note that, for sufficiently large $n$, $r\leq \log n$.

We will now check that the conditions \ref{cond0}--\ref{cond2} hold for an application of Lemma~\ref{divisionlemma} (using $r$ in the place of $\ell$ where it appears in that lemma, and $\eps/2$ in place of $\eps$) with $a_i=k_i\ell$ for each $2\leq i\leq r$ and $a_1=|B|-a_2-\ldots-a_r\geq k_1\ell$. Note that if these conditions hold for $k_1\ell$, then they also hold for $a_1\geq k_1\ell$.

Let $\Delta=10^6d\log n$, so that from the definition of a $(d, \eps)$-pseudorandom digraph, we have $\Delta^\pm(D)\leq \Delta$. Let $\delta=d\log n/8$, so that, for each $v\in V(D)$, $d^\pm(v,B)\geq \delta$ by \ref{das}. For \ref{cond0}, note that, for each $i\in [r]$,
\[
\left(\frac{\eps}{2}\right)^2\cdot \frac{(k_i\ell)^2}{|B|}=\Omega_\eps\left(\frac{n}{\log^2 n}\right)=\omega(r^3).
\]
Now, for each $i\in [r]$, note that
\begin{align}\label{crackle}
\frac{\delta (k_i\ell)}{24|B|}\geq\frac{d\log n\cdot(k_i\ell)}{192n}
=\Omega\left(\frac{dk_i\lg{2}}{\lg{5}}\right)=\omega\left( d\lg{2}\right).
\end{align}
For \ref{cond1}, then, for each $i\in [r]$ and sufficiently large $n$, we have
\[
\left(\frac{\eps}{2}\right)^2 \cdot\frac{(k_i\ell)^2}{|B|}\cdot\exp\left(\frac{\delta (k_i\ell)}{24|B|}\right)
\overset{\eqref{crackle}}{\geq} \frac{\eps^2\ell^2}{4n}\cdot \exp\left(\omega\left(d\lg{2}\right)\right)
= \Omega_\eps\left(\frac{n}{\log^2 n}\cdot \log^6 n\right)=\omega(nr^3).
\]
Furthermore, for \ref{cond3}, for each $i\in [r]$, as $r\leq \log n$ and $\Delta=10^6 d\log n$, we have
\[
\exp\left(\frac{\delta(k_i\ell)}{24|B|}\right)\overset{\eqref{crackle}}{=}\exp\left(\omega(d\lg{2})\right)=\omega(r\Delta^2).
\]
Finally, as $\log (en/m)=O(\log(d\log n))$, we have, for each $i\in [r]$,
\[
\left(\frac{\eps}{2}\right)^2\cdot \frac{k_i\ell}{10^3}=\Omega_\eps\left( \frac{n(\lg{2})^2}{\log n\cdot\lg{5}}\right)=\Omega_\eps\left(m\cdot \frac{d(\lg{2})^2}{\lg{3}\cdot\lg{5}}\right)=\omega\left( m\log\left(\frac{en}{m}\right)\right).
\]

Having checked the appropriate conditions, for sufficiently large $n$, by Lemma~\ref{divisionlemma}, we can take disjoint sets $B_1,\ldots,B_r$ in $B$ so that
\stepcounter{alabel}
\begin{enumerate}[label = \textbf{\Alph{alabel}\arabic{enumi}}]
\item For each $2\leq i\leq r$, $|B_i|=k_i\ell$, and $k_1\ell\leq |B_1|=|B|-a_2-\ldots-a_r\leq (k_1+1)\ell$.\label{peace}
\item For each $i\in [r]$ and $v\in V(D)$, \label{peace2}
\[
\frac{\delta k_i\ell}{4|B|}\leq  d^\pm(v,B_i)\leq \frac{4\Delta (k_i+1)\ell}{|B|}\leq \frac{16\Delta k_i\ell}{n}=o(d(\lg{2})^2).
\]
\item For each $U\subset B$ with $|U|=m$, we have $|N^\pm(U,B_i)|\geq (1/2+\eps/4)|B_i|$.\label{peace3}
\end{enumerate}
%Let $B_0=B\setminus (B_1\cup\ldots\cup B_r)$, so that $|B_0|=|B|-k\ell\leq \ell$.
\noindent
The directed paths we find will all start and end in $B_1\cup B_r$, so that, by \ref{peace2}, for sufficiently large $n$, every vertex in $V(D)$ has at most $d(\lg{2})^2$ in- and out-neighbours among these vertices.

Let $D_1=D[B_1\cup B_2]$, $D_r=D[B_{r-1}\cup B_{r}]$, and, for each $2\leq i\leq r-1$, let $D_i=D[B_{i-1}\cup B_i\cup B_{i+1}]$. For each $i\in [r]$, let
\begin{equation}
n_i=|D_i|\leq 5k_i\ell+\ell\leq \frac{n(\lg{2})^2}{\log n}\leq m\cdot d(\lg{2})^2.\label{nieq}
\end{equation} Let $\bar{\Delta}=d(\lg{2})^2/2$, so that, by \ref{peace2}, for sufficiently large $n$, $\Delta(D_i)\leq \bar{\Delta}$ for each $i\in [r]$.

Let $d_0=d\lg{2}/\lg{4}$, so that $\exp(d_0)=\omega(k_i\bar{\Delta}^2)$ for each $i\in [r]$. Furthermore, for each $i\in [r]$, let $\delta_i=\delta k_i\ell/4|B|$, so that, by \ref{peace2}, for each $v\in V(D_i)$,
\begin{equation}\label{bread}
d^\pm(v,B_i)\geq \frac{\delta k_i\ell}{4|B|}=\delta_i\geq \frac{(d \log n)\cdot k_i\cdot \ell}{32n}\geq
\frac{dk_i\lg{2}}{10^4\lg{5}}=\omega(k_id_0).
\end{equation}

We will now check the conditions \ref{cond0}--\ref{cond2} to apply Lemma~\ref{divisionlemma} with $\eps/4$ in place of $\eps$ to the set $B_i$ in the digraph $D_i$, for each $i\in [r]$, to find, in $B_i$, $k_i$ disjoint sets with size $\ell$. Note that, for each $i\in [r]$, $|B_i|\leq k_i\ell+\ell$.
First, for \ref{cond0}, we have, for each $i\in [r]$,
\[
\left(\frac{\eps}{4}\right)^2\cdot \frac{\ell^2}{k_i\ell+\ell}=\Omega_\eps\left(\frac{\ell}{k_i}\right)=\omega\left(\frac{n}{\log^2 n}\right)=\omega(k_i^3).
\]
Secondly, for \ref{cond1}, we have, for each $i\in [r]$,
\begin{align*}
\left(\frac{\eps}{4}\right)^2\cdot\frac{\ell^2}{k_i\ell+\ell}\cdot\exp\left(\frac{\ell\delta_i}{24(k_i\ell+\ell)}\right)
=\Omega_\eps\left(\frac{\ell}{k_i}\cdot\exp\left(\frac{\delta_i}{48k_i}\right)\right)
&\overset{\eqref{bread}}{=}\Omega_\eps\left( \frac{n}{\log n\cdot\lg{5}}\exp\left(d_0\right)\right)
\\
&=\Omega_\eps\left(\frac{n}{\log n}\cdot (\lg{2})^6\right)\overset{\eqref{nieq}}{=}\omega(n_ik_i^3).
\end{align*}
Furthermore, for \ref{cond3}, we have, for each $i\in [r]$,
\[
\exp\left(\frac{\ell \delta_i}{24(k_i\ell+\ell)}\right)\overset{\eqref{bread}}{=}\Omega\left(\exp\left(d_0\right)\right)=\omega(k_i\bar{\Delta}^2).
\]
Finally, for \ref{cond2}, for each $i\in [r]$, by \eqref{nieq}, we have $\log(en_i/m)=O( \log (d\lg{2}))$, we have
\[
\left(\frac{\eps}{4}\right)^2\cdot \frac{\ell}{10^3}=\Omega_\eps\left(\frac{n\lg{2}}{\log n\cdot\lg{5}}\right)= \Omega_\eps\left(m\cdot \frac{d\lg{2}}{\lg{5}\cdot\lg{3}}\right)
=\omega\left(m\log\left(\frac{en_i}{m}\right)\right).
\]

Having checked the appropriate conditions, for sufficiently large $n$, by Lemma~\ref{divisionlemma} applied to the set $B_i$ in the digraph $D_i$, for each $i\in [r]$, we can find in $B_{i}$ disjoint sets $B_{i,1}, \ldots, B_{i,k_i}$ so that the following hold.
\stepcounter{alabel}
\begin{enumerate}[label = \textbf{\Alph{alabel}\arabic{enumi}}]
 \item For each $i\in [r]$ and $j\in [k_i]$, $|B_{i,j}|=\ell$.\label{sad1}
 \item For each $i\in [r]$, $j\in [k_i]$ and $v\in V(D_i)$, $d^\pm(v,B_{i,j})\geq \delta_i\ell/4(k_i\ell+\ell)\geq d_0$ (using \eqref{bread}).\label{sad2}
 \item For each $i\in [r]$, $j\in [k_i]$ and $U\subset V(D_i)$ with $|U|=m$, we have $|N^\pm(U,B_{i,j})|\geq (1/2+\eps/8)|B_{i,j}|$.\label{sad3}
 \end{enumerate}
Note that, for $2\leq i\leq r$, the sets  $B_{i,1}, \ldots, B_{i,k_i}$ partition $B_i$, and the sets $B_{1,1},\ldots,B_{1,k_1}$ cover all but at most $\ell$ vertices in $B_1$.

Recall that $\sum_{i\in [r]}k_i=k$. Relabelling the sets $B_{1,1},\ldots,B_{1,k_1},B_{2,1},\ldots,B_{2,k_2},\ldots,B_{r,1},\ldots,B_{r,k_r}$ as $C_1,\ldots, C_k$ respectively, from \ref{sad1}--\ref{sad3}, we have that $|C_i|=\ell$ for each $i\in [k]$ and the following hold.
\stepcounter{alabel}
\begin{enumerate}[label = \textbf{\Alph{alabel}\arabic{enumi}}]
 \item For each $1\leq i \leq k-1$ and $v\in C_i$, $d^+(v,C_{i+1})\geq d_0$.\label{dusk1}
 \item For each $2\leq i \leq k$ and $v\in C_i$, $d^-(v,C_{i-1})\geq d_0$.\label{dusk2}
 \item For each $1\leq i \leq k-1$ and $U\subset C_i$ with $m\leq |U|\leq \lceil \ell/2\rceil$, we have $|N^+(U,C_{i+1})|\geq (1/2+\eps/8)\ell\geq |U|$.\label{dusk3}
 \item For each $2\leq i \leq k$ and $U\subset C_i$ with $m\leq |U|\leq \lceil \ell/2\rceil$, we have $|N^-(U,C_{i-1})|\geq (1/2+\eps/8)\ell\geq |U|$.\label{dusk4}
 \end{enumerate}

By \ref{dusk1}, \ref{dusk2} and \ref{pseud1} in the definition of $(d,\eps)$-pseudorandomness, the following hold.
\begin{enumerate}[label = {\bfseries \Alph{alabel}\arabic{enumi}'}]
\item For each $1\leq i \leq r-1$ and $U\subset C_i$ with $|U|\leq m$, $|N^+(U,C_{i+1})|\geq |U|$.\label{dusk12}
\item For each $2\leq i \leq r$ and  $U\subset C_i$ with $|U|\leq m$, $|N^-(U,C_{i-1})|\geq |U|$.\label{dusk22}
\end{enumerate}

Thus, for each $i\in [r-1]$, by \ref{dusk12}, \ref{dusk22}, \ref{dusk3}, \ref{dusk4} and Proposition~\ref{matchymatchy} applied to the bipartite graph between $C_i$ and $C_{i+1}$ with (undirected) edges those directed from $C_i$ to $C_{i+1}$ in $D$,  there is a matching from $C_i$ into $C_{i+1}$ in $D$. Combining such matchings gives $\ell$ vertex disjoint paths covering $C_1\cup \ldots\cup C_{k}$. These paths start in $B_1$ and end in $B_r$, and cover all the vertices in $B$ except for $|B_1|-k_1\ell\leq \ell$ vertices in $B_1$. Taking these paths with the uncovered vertices in $B_1$, to get at most $2\ell$ paths, thus gives the required partition of $B$.
\end{proof}
\fi

We wish to cover most of our digraph with few paths, all of which end in a certain subset $B_2$, in order to have \ref{D4} in the definition of a good partition. To do this, we use Lemma~\ref{initialpaths} to cover the vertices outside of $B_2$ with few paths, and then use Lemma~\ref{divisionlemma} in the same way as before to find directed paths covering $B_2$. By matching the end vertices of the paths outside $B_2$ into some of the start vertices of the paths in $B_2$, and similarly attaching the start vertices of the paths outside $B_2$ into some of the end vertices of the paths in $B_2$, we will get a set of paths which start and end in $B_2$.

\begin{lemma}\label{finalpaths} For each $\e>0$, there exists $n_0=n_0(\e)$ such that the following holds for every $d\geq 10^{-5}$ and $n\geq n_0$. Suppose an $n$-vertex $(d,\e)$-pseudorandom digraph $D$ contains disjoint sets $B_1$ and $B_2$ such that the following hold with
\[
d_0=\frac{d\lg{2}}{\lg{4}},\;\;\;\; \ell=\frac{n\lg{2}}{\log n\cdot\lg{5}},\;\;\;\;\text{ and }\;\;\;\;k=\lg{2}.
\]
\begin{enumerate}[label= {\bfseries P\arabic{enumi}}]
\item $|B_1|\geq n/2$ and $|B_2|=k\ell$. \label{house1}
\item For each $v\in V(D)$, $d^\pm(v,B_1)\geq d\log n/8$ and, for each $U\subset V(D)$ with $|U|=m$, $|N^\pm(U,B_1)|\geq (1/2+\eps/2)|B_1|$. \label{house2}
\item For each $v\in V(D)$, $4d_0k\leq d^\pm(v,B_2)\leq d(\lg{2})^2$ and, for each $U\subset V(D)$ with $|U|=m$, $|N^\pm(U,B_2)|\geq (1/2+\eps/4)|B_2|$. \label{house3}
\end{enumerate}
Then, for any set $V\subset V(D)\setminus (B_1\cup B_2)$ with $|V|\leq \eps |B_1|/20-2$ and $u,v\in V(D)\setminus(V\cup B_1\cup B_2)$, there is a set of at most $\ell$ directed paths with length at least 1 which partition $V\cup B_1\cup B_2\cup\{u,v\}$, each start and end in $B_2$, and one of which contains $\vec{uv}$.
\end{lemma}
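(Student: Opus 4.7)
The plan is to build the required partition by combining Lemma~\ref{initialpaths} on the outside-$B_2$ vertex set $V\cup B_1$ with a Lemma~\ref{divisionlemma}-style covering of $B_2$, gluing the two together with a single Hall-matching step, and treating the forced edge $\vec{uv}$ via a short designated path.

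First, using $d^-(u,B_2), d^+(v,B_2)\ge 4d_0k\ge 2$ from \ref{house3}, I pick distinct vertices $s\in N^-(u)\cap B_2$ and $t\in N^+(v)\cap B_2$, and set $P_{uv}:=s\,u\,v\,t$, a length-$3$ directed path in $D+\vec{uv}$ which contains $\vec{uv}$ and whose ends lie in $B_2$. Then I apply Lemma~\ref{initialpaths} with $\eps/2$ in place of $\eps$ to $B:=V\cup B_1$. The degree and expansion hypotheses descend from \ref{house1}--\ref{house2}: the bound $|V|\le\eps|B_1|/20$ gives $|N^\pm(U,B)|\ge(1/2+\eps/2)|B_1|\ge(1/2+\eps/4)|B|$ for every $m$-set $U$. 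This yields at most $n\lg{2}/(20\log n\cdot\lg{5})=\ell/20$ vertex-disjoint directed paths $P_1,\dots,P_r$ partitioning $V\cup B_1$, with starts $a_j$ and ends $b_j$ (with $a_j=b_j$ for any single-vertex path).

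To cover $B_2\setminus\{s,t\}$, I imitate the inner construction from the proof of Lemma~\ref{initialpaths}: apply Lemma~\ref{divisionlemma} to $B_2\setminus\{s,t\}$ (whose degree and expansion profile follows from \ref{house3} up to losing two vertices) to obtain a partition $C_1,\dots,C_k$ with $k=\lg{2}$ and $|C_j|\approx\ell$, where for every $v\in V(D)$ and every $j$ one has $d^\pm(v,C_j)\ge d_0$ and, for every $m$-set $U$, $|N^\pm(U,C_j)|\ge(1/2+\eps/16)|C_j|$. Using \ref{pseud1} (applicable since $d_0=d\lg{2}/\lg{4}$) on small sets and the large-set expansion just obtained, Proposition~\ref{matchymatchy} yields matchings $C_j\to C_{j+1}$ for each $j$; their concatenation partitions $B_2\setminus\{s,t\}$ into $|C_1|$ directed paths $Q_1,\dots,Q_{|C_1|}$, each starting at some $s_i\in C_1\subset B_2$ and ending at some $t_i\in C_k\subset B_2$.

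The main step is the splicing. I form the bipartite graph $H$ with one side $\{a_1,\dots,a_r\}\sqcup\{b_1,\dots,b_r\}$ (the ``slots'', $2r$ in total) and the other side $\{Q_1,\dots,Q_{|C_1|}\}$, with $a_j\sim Q_i$ iff $\vec{t_ia_j}\in E(D)$ and $b_j\sim Q_i$ iff $\vec{b_js_i}\in E(D)$. A perfect matching of the slots into distinct $Q_i$'s produces, for each $j$, a spliced path $R_j:=Q_{\pi(j)}\vec{t_{\pi(j)}a_j}P_j\vec{b_js_{\sigma(j)}}Q_{\sigma(j)}$ starting and ending in $B_2$; together with $P_{uv}$ and the $|C_1|-2r$ leftover $Q_i$'s this gives a partition of $V\cup B_1\cup B_2\cup\{u,v\}$ by $|C_1|-r+1\le\ell$ directed paths of length at least $1$, each with both ends in $B_2$, one containing $\vec{uv}$. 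The main obstacle is verifying Hall's condition in $H$ in the two neighbourhood directions simultaneously: for a slot set $T=T_a\sqcup T_b$, since distinct $t_i$ (respectively $s_i$) lie in distinct $Q_i$, one has $|N_H(T)|\ge\max(|N^-(T_a,C_k)|,|N^+(T_b,C_1)|)$. If $\max(|T_a|,|T_b|)\le 2m$, then \ref{pseud1} gives expansion by factor $10$ on whichever of the two sides is larger, so the maximum is at least $5|T|$; otherwise $|T_a|\ge m$ or $|T_b|\ge m$, and the Lemma~\ref{divisionlemma} output gives a neighbourhood of size at least $|C_1|/2\ge|T|$ (using $|T|\le 2r\le|C_1|/10$). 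This completes Hall's condition and produces the required matching, and hence the desired partition.
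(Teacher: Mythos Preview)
Your outline is essentially the paper's own argument: apply Lemma~\ref{initialpaths} to the non-$B_2$ part, split $B_2$ into $k$ layers $C_1,\dots,C_k$ via Lemma~\ref{divisionlemma}, thread the $B_2$-paths $Q_i$ through consecutive matchings, and splice the $P_j$'s into the $Q_i$'s by a Hall-type step. Your treatment of the forced edge (pre-selecting $s,t\in B_2$ and setting aside $P_{uv}=suvt$) and your single-matching splicing (slots $\{a_j\}\sqcup\{b_j\}$ into the $Q_i$'s) are harmless variants of what the paper does; the paper instead folds $u,v$ into $B$, adds $P_{r+1}=\vec{uv}$, uses two separate matchings $Y\to C_1$ and $C_k\to X$, and then breaks any cycles that arise.

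There is one genuine gap. You assert that the Lemma~\ref{divisionlemma} partition of $B_2\setminus\{s,t\}$ gives $d^\pm(v,C_j)\ge d_0$ for \emph{every} $v\in V(D)$. That conclusion requires applying Lemma~\ref{divisionlemma} with the ambient digraph equal to $D$, but then condition~\ref{cond3} fails: with $a_i\approx\ell$, $a\approx k\ell$ and $\delta\approx 4d_0k$ one gets $a_i\delta/(24a)\approx d_0/6$, while $\Delta=\Delta^\pm(D)\le 10^6 d\log n$, so $\exp(d_0/6)$ is nowhere near $320\,\ell\,\Delta^2$ for small $d$. The paper avoids this by passing to the subdigraph $D'$ on $B_2\cup X\cup Y$ (with $X,Y$ the start/end vertices of the $P_i$'s, and only edges touching $B_2$ retained), where the maximum degree drops to $\bar\Delta=O(d(\lg{2})^2)$; this relies crucially on the extra output clause of Lemma~\ref{initialpaths} that every vertex of $D$ has at most $d(\lg{2})^2$ in- or out-neighbours among the start/end vertices of the $P_i$'s. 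You neither invoke that clause nor restrict to a subdigraph, so as written the degree guarantee $d^\pm(a_j,C_k),\,d^\pm(b_j,C_1)\ge d_0$ that feeds \ref{pseud1} in your Hall argument is not established. Once you restrict to $D'$ (which contains all the vertices $a_j,b_j$ you actually need), the rest of your argument goes through.
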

\ifproofdone
\else
\begin{proof}
Let $B=V\cup B_1\cup\{u,v\}$ and note, that, by \ref{house2}, as $|V\cup\{u,v\}|\leq \eps |B|/20$, we have the following.
\stepcounter{alabel}
\begin{enumerate}[label = \textbf{\Alph{alabel}\arabic{enumi}}]
\item \label{spart1} For each $U\subset V(D)$ with $|U|=m$, we have $|N^{\pm}(U,B)|\geq (1/2+\eps/2)|B_1|\geq (1/2+\eps/4)|B|$.
\item \label{spart2} For each $v\in V(D)$, we have $d^\pm(v,B)\geq d\log n/8$.
\end{enumerate}
Thus, by \ref{spart1}, \ref{spart2}, \ref{house1} and Lemma~\ref{initialpaths}, there is a collection of directed paths $P_1,\ldots,P_r$, for some $r\leq \ell/20$, in $D$ which partition $B$ (allowing single vertices as paths) and so that every vertex in $D$ has at most $d(\lg{2})^2$ in- and out-neighbours among their start and end
vertices. Let $P_{r+1}$ be the path with length 1 consisting of the edge $\vec{uv}$.
For each $i\in [r+1]$, label vertices so that $P_i$ is an $x_i,y_i$-path (noting that we may have $x_i=y_i$). Let $X=\{x_1,\ldots,x_{r+1}\}$ and $Y=\{y_1,\ldots,y_{r+1}\}$, and let $D'$ be the digraph on the vertex set $B_2\cup X\cup Y$ whose edges are the edges in $D[B_2\cup X\cup Y]$ with at least one vertex in $B_2$.

Note that every vertex has at most $(\lg{2})^2+2$ in- or out-neighbours in $X\cup Y$. Combining this with~\ref{house3}, letting $\bar{\Delta}=d(\lg{2})^3$, we have, for sufficiently large $n$, that $\Delta^\pm(D')\leq \bar{\Delta}$.  Let $k_1=\lg{2}$, so that, $\lg{2} \leq k_1\leq 2\lg{2}$, and, by definition of $k$, $k_1=k$.
Note that, for each $v\in V(D')$, $d^\pm(v,B_2)\geq 4k_1d_0$ by \ref{house3}.
Let $n_1=|D'|$, so that $n_1\leq 2k_1\ell$.
The conditions \ref{cond0}--\ref{cond2} for an application of Lemma~\ref{divisionlemma} to $D'$ to partition $B_2$ into $k$ sets with size $\ell$ hold very similarly to the same conditions in the second application of this lemma in the proof of Lemma~\ref{initialpaths} (from \eqref{nieq} onwards) -- all that differs is a factor of 50 in the value of $\ell$.

Thus, for sufficiently large $n$, by Lemma~\ref{divisionlemma} there is a partition $C_1\cup\ldots\cup C_k$ of $B_2$ such that $|C_i|=\ell$ for each $i\in [\ell]$, and the following hold.
\stepcounter{alabel}
\begin{enumerate}[label = \textbf{\Alph{alabel}\arabic{enumi}}]
\item For each $i\in [\ell]$ and $U\subset V(D')$ with $|U|=m$, we have $|N^\pm(U,C_i)|\geq (1/2+\eps/8)|C_i|$.\label{raining1}
\item For each $v\in V(D')$ and $i\in [\ell]$, we have $d^\pm(v,C_i)\geq 4kd_0\cdot \ell /4k\ell= d_0$.\label{raining2}
\end{enumerate}

Similarly to the reasoning in the proof of Lemma~\ref{initialpaths}, for each $i\in [k-1]$, there is a matching from $C_i$ into $C_{i+1}$. Combine these matchings to get directed paths $Q_i$, $i\in [\ell]$, which cover $B_2$. Similarly to the reasoning in the proof of Lemma~\ref{initialpaths}, by~\ref{raining1} and \ref{raining2}, for each $U\subset X$ we have $|N^{-}(U,C_k)|\geq |U|$, and for each $U\subset Y$ we have $|N^+(U,C_1)|\geq |U|$. Note that here it is important that $|X|\leq |C_k|/2$ and $|Y|\leq |C_1|/2$. Thus, as Hall's matching condition is satisfied, we can find vertex disjoint edges $e_i$, $i\in [r+1]$, directed from $Y$ into $C_1$, and vertex disjoint edges $f_i$, $i\in [r+1]$ directed from $C_k$ into $X$. Renaming if necessary, assume that, for each $i\in [r+1]$, $x_i\in f_i$ and $y_i\in e_i$.

Note that combining the paths $Q_i$, $i\in [\ell]$, and paths $P_i+e_i+f_i$, $i\in [r+1]$, gives a collection of at most $\ell$ directed paths and cycles in $D$ which cover $V\cup B_1\cup B_2\cup\{u,v\}$ and so that each path starts and ends in $B_2$, each cycle contains some path $Q_i$, and as the edge $\vec{uv}$ is contained in $P_{r+1}$, it is contained in one path or cycle. Breaking an edge in some $Q_i$ in each cycle, gives then the required set of paths.
\end{proof}
\fi

%%%%%%%%%%%%%%%%%%%%%%%%%%%%%%%%%%%%%%%%%%%%%%%%%%%%%%%%%%%%%%%%%%%%%%%%%%%%%%%%%%%%%%%%%%%%%%%%%%%%%%%%%%%%%%%%%%%%%%%%%%%
%%%%%%%%%%%%%%%%%%%%%%%%%%%%%%%%%%%%%%%%%%%%%%%%%%%%%%%%%%%%%%%%%%%%%%%%%%%%%%%%%%%%%%%%%%%%%%%%%%%%%%%%%%%%%%%%%%%%%%%%%%%

\section{Finding an $(\ell,r)$-good partition}\label{secfindgood}
To find a good partition of a pseudorandom digraph, we first apply Lemma~\ref{divisionlemma} twice to find the necessary sets for the good partition and record the properties we get, as follows.

\begin{lemma}\label{goodpart0} For each $\e>0$, there exists $n_0=n_0(\e)$ such that the following holds for every $d\geq 10^{-5}$ and $n\geq n_0$ with
\[
k = \lg{2},\;\;\;
\ell=\frac{n\log^{[2]}n}{\log n\cdot \log^{[5]}n},\;\;\;d_0=\frac{d\lg{2}}{\lg{4}},\;\;\;
m=\frac{n\lg{3}}{d\log n},\;\;\;
\text{ and }
\;\;\;
r=\frac{n\log^{[2]}n}{\log n \cdot\log^{[6]}n}.
\]
Every $n$-vertex $(d,\e)$-pseudorandom digraph $D$ has a partition $V(D)=A\cup B_1\cup B_2\cup R_1\cup R_2\cup R_3\cup R_4$ such that the following hold.
\stepcounter{alabel}
\begin{enumerate}[label = \textbf{\Alph{alabel}\arabic{enumi}}]
\item $|A|=\eps n/40$, $|B_2|=k\ell$
and  $|R_1|=|R_2|=|R_3|=|R_4|=r$.\label{door1}
\item For each $v\in V(D)$, $d^\pm(v,A)\geq d(\log n)^{3/4}$ and, for each $U\subset V(D)$ with $|U|=m$, $|N^\pm(U,A)|\geq (1/2+\eps/2)|A|$. \label{door2}
\item For each $v\in V(D)$, $d^\pm(v,B_1)\geq d\log n/8$ and, for each $U\subset V(D)$ with $|U|=m$, $|N^\pm(U,B_1)|\geq (1/2+\eps/2)|B_1|$. \label{door3}
\item For each $v\in V(D)$, $4d_0k\leq d^\pm(v,B_2)\leq d(\lg{2})^2$ and, for each $U\subset V(D)$ with $|U|=m$, $|N^\pm(U,B_2)|\geq (1/2+\eps/4)|B_2|$. \label{door4}
\item For each $v\in B_2\cup R_1\cup R_2\cup R_3\cup R_4$ and $i\in [4]$, $40d_0\leq d^\pm(v,R_i)\leq d(\lg{2})^2$, and, for each $U\subset B_2\cup R_1\cup R_2\cup R_3\cup R_4$ with $|U|=m$, $|N^\pm(U,R_i)|\geq (1/2+\eps/4)|R_i|$. \label{door5}
\end{enumerate}
\end{lemma}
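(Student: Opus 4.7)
My plan is to obtain the partition via two applications of Lemma~\ref{divisionlemma}, followed by an appeal to \ref{pseud3} for a single expansion condition. First, I will apply Lemma~\ref{divisionlemma} to $D$ with $A = V(D)$, $\delta = d\log n$ and $\Delta = 10^6 d\log n$ (from \ref{pseud0}), $\eps$ unchanged, and $\ell = 4$ parts of sizes $\eps n/40$, $n-\eps n/40 - 2k\ell$, $k\ell$, and $k\ell$, labelling them $A$, $B_1^*$, $B_2$, and $R'$ respectively. Conditions \ref{peep1} and \ref{peep2} follow from \ref{pseud0} and \ref{pseud3}, while \ref{cond0}--\ref{cond2} are verified using the smallest part size $k\ell = n(\lg{2})^2/(\log n \cdot \lg{5})$; the only delicate one is \ref{cond3}, which reduces to $\exp(d(\lg{2})^2/(24\lg{5})) \gg \log^2 n$ and holds since $d\lg{2}/\lg{5} \to \infty$.

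Next, I will apply Lemma~\ref{divisionlemma} to the induced subdigraph $D[R'\cup B_2]$ with $A = R'$, using the round~1 bounds $\delta' = d(\lg{2})^2/(4\lg{5})$ for $d^\pm(v, R')$ and $\Delta' = 8 \cdot 10^6 d(\lg{2})^2/\lg{5}$ for the in- and out-degrees in $D[R'\cup B_2]$; the expansion property \ref{peep2} transfers from round~1 with $\eps' = \eps/2$. I choose $\ell = 4$ parts of size $r$ each, which fit inside $R'$ since $4r < k\ell = |R'|$, and label them $R_1, R_2, R_3, R_4$. The conditions to check are analogous to those in the second application within the proof of Lemma~\ref{initialpaths}; in particular \ref{cond3} becomes $\exp(d\lg{2}/(96\lg{6})) \gg (\Delta')^2$, which holds as $\lg{2}/\lg{6} \to \infty$. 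Finally I set $B_1 := B_1^* \cup \bigl( R' \setminus (R_1 \cup R_2 \cup R_3 \cup R_4) \bigr)$ to complete the partition $V(D) = A \cup B_1 \cup B_2 \cup R_1 \cup R_2 \cup R_3 \cup R_4$.

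The sizes in \ref{door1} are immediate by construction, and \ref{door2}, \ref{door4}, and \ref{door5} follow directly from the outputs of the two applications of Lemma~\ref{divisionlemma} together with the asymptotic inequalities $\eps \log n/160 \geq (\log n)^{3/4}$, $\lg{4} \geq 16 \lg{5}$, $\lg{5} \geq 4 \cdot 10^6$, and $\lg{4} \geq 640 \lg{6}$, all of which hold for sufficiently large $n$. The lower bound $d^\pm(v, B_1) \geq d\log n/8$ in \ref{door3} carries over from the corresponding bound for $B_1^*$ since $B_1 \supseteq B_1^*$.

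The main obstacle is the matching expansion condition on $B_1$ in \ref{door3}: enlarging $B_1^*$ to $B_1$ by absorbing the leftover of round~2 loses $o(1)$ in the expansion ratio, narrowly breaking the target $(1/2+\eps/2)|B_1|$ if we try to deduce it from round~1's output. I resolve this by invoking \ref{pseud3} directly: for any $U \subset V(D)$ with $|U| = m$,
\[
|N^\pm(U, B_1)| \geq |N^\pm(U)| - |V(D) \setminus B_1| \geq (1/2+\eps)n - \bigl(\eps n/40 + k\ell + 4r\bigr) \geq (1/2+\eps/2)n \geq (1/2+\eps/2)|B_1|,
\]
where the penultimate step uses $k\ell + 4r = o(n)$ against the linear slack $(19\eps/40)n$.
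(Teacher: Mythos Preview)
Your proposal is correct and follows essentially the same two-round strategy as the paper: first apply Lemma~\ref{divisionlemma} to all of $V(D)$ to carve out $A$, a large piece for $B_1$, and two pieces of order $k\ell$, then apply Lemma~\ref{divisionlemma} again inside the smaller digraph to extract $R_1,\ldots,R_4$. The difference is only in where the second-round leftover is absorbed. The paper sizes the two small pieces as $(1-\eps/5)k\ell$ and $\eps k\ell/5+4r$, runs round~2 on the latter, and pushes the leftover into $B_2$; the careful $(1-\eps/5)$ factor is exactly what lets the expansion ratio survive the enlargement to give \ref{door4}. You instead take two pieces of size $k\ell$, fix $B_2$ once and for all in round~1, and dump the round-2 leftover (of size $k\ell-4r$) into $B_1$; you then recover the expansion part of \ref{door3} not from the round-1 output but by a direct appeal to \ref{pseud3}, exploiting the linear slack $(\eps/2)n$ against the $o(n)$ complement. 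Both routes work; yours avoids the delicate sizing at the cost of one extra invocation of the pseudorandomness hypothesis.
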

\begin{proof}
We will apply Lemma~\ref{divisionlemma} twice, again so that the second application may be with a stronger maximum degree condition.

We will check the conditions \ref{cond0}--\ref{cond2} for an application of Lemma~\ref{divisionlemma} to find a partition $V(D)=A\cup B_1\cup B_2'\cup B_3'$. Let
\begin{equation}\label{aidefn}
a_1=\frac{\eps n}{40},\;\;\;\; a_3=(1-\eps/5)k\ell,\;\;\;\; a_4=\frac{\eps k\ell}{5}+4r,\;\;\;\; \text{ and }\;\;\;\;a_2=n-a_1-a_3-a_4\geq \frac{n}{2},
\end{equation}
where the last inequality follows for sufficiently large $n$. Let $\delta=d\log n$ and $\Delta=10^6d\log n$. From the definition of $(d,\eps)$-pseudorandomness, we have that $\delta^\pm(D)\geq \delta$ and $\Delta^\pm(D)\leq \Delta$.

Note that $a_1,a_2,a_3\geq a_4$, so that we need only check \ref{cond0}--\ref{cond2} for $a_4$. First, for \ref{cond0}, we have
\begin{equation}\label{dom}
\frac{\eps^2a_4^2}{n}= \Omega_\eps\left(\frac{n}{\log^2 n}\right)=\omega(1).
\end{equation}
Note that $k\ell\delta/n=\omega(d\lg{2})$. Thus, for \ref{cond1}, we have
\[
\frac{\eps^2a_4^2}{n}\cdot \exp\left(\frac{a_4\delta}{24n}\right)
\overset{\eqref{dom}}{=} \Omega_\eps\left(\frac{n}{\log^2n}\cdot \exp\left(\frac{\eps k\ell \delta}{10^3n}\right)\right)=
\frac{n}{\log^2n}\cdot \exp\left(\omega(d\lg{2})\right)=
\omega(n).
\]
Furthermore, for \ref{cond3}, we have
\[
\exp\left(\frac{a_4\delta}{24n}\right)\geq \exp\left(\frac{\eps k\ell\delta}{10^3n}\right)=\exp\left(\omega(d\lg{2})\right)=\omega(d^2\log^2n)=\omega(\Delta^2).
\]
Finally, we have that $\log(en/m)=O(\log(d\log n))$, so that, for \ref{cond2},
\[
\frac{\eps^2a_4}{10^3}=\Omega_\eps(k\ell)=\Omega_\eps\left(\frac{n (\lg{2})^2}{\log n\cdot\lg{5}}\right)=\Omega_\eps\left(m\cdot\frac{d (\lg{2})^2}{\lg{3}\cdot \lg{5}}\right)=\omega\left(m\cdot\log\left(\frac{en}{m}\right)\right).
\]
As $a_1=\eps n/40$, $a_1\delta/4n= \eps d\log n/160=\omega(d(\log n)^{3/4})$. As $a_2\geq n/2$, $a_2\delta/4n\geq d\log n/8$.

Having checked the appropriate conditions, for sufficiently large $n$, by Lemma~\ref{divisionlemma}, $V(D)$ has a partition $A\cup B_1\cup B_2'\cup B_3'$ so that $|A|=a_1=\eps n/40$, $|B_1|=a_2$, $|B_2'|=a_3=(1-\eps/5)k\ell$, $|B_3'|=a_4=\eps k\ell/5+4r$, and \ref{door2} and \ref{door3} hold along with the following.
\stepcounter{alabel}
\begin{enumerate}[label = \textbf{\Alph{alabel}\arabic{enumi}}]
\item For each $v\in V(D)$, $a_3\delta/4n\leq d^\pm(v,B_2')\leq 4a_3\Delta/n$, and, for each $U\subset V(D)$ with $|U|=m$, $|N^\pm(U,B'_2)|\geq (1/2+\eps/2)|B_2'|$.\label{eqsub1}
\item For each $v\in V(D)$, $a_4\delta/4n\leq d^\pm(v,B_3')\leq 4a_4\Delta/n$, and, for each $U\subset V(D)$ with $|U|=m$, $|N^\pm(U,B'_3)|\geq (1/2+\eps/2)|B_3'|$.\label{eqsub2}
\end{enumerate}

Let $D'=D[B_2'\cup B_3']$. Let $n'=|D'|=a_3+a_4= k\ell+4r\leq 2k\ell$, for sufficiently large $n$, and let $ \bar{\Delta}=d(\lg{2})^2$.
Note that
\begin{equation}\label{newkevin}
\frac{4(a_3+a_4)\Delta}{n}\leq \frac{8k\ell\Delta}{n}=O\left(\frac{d(\lg{2})^2}{\lg{5}}\right)=o(\bar{\Delta}).
\end{equation}
Thus, by \ref{eqsub1} and \ref{eqsub2}, for sufficiently large $n$ we have $\Delta(D')\leq \bar{\Delta}$.
Using \eqref{aidefn}, let
\begin{equation}\label{kevin}
\delta'=\frac{a_4\delta}{4n}\geq \frac{\eps k\ell\delta}{20n}
=\Omega_\eps\left(k\ell\cdot\frac{d\lg{2}}{r\lg{6}}\right)
=\Omega_\eps\left(\frac{k\ell}{r}\cdot\frac{d_0\lg{4}}{\lg{6}}\right)
=\omega\left(\frac{k\ell}{r}\cdot d_0\right).
\end{equation}
By \ref{eqsub2}, for each $v\in V(D')$, we have $d^\pm(v,B_3')\geq \delta'$.

We will now check the conditions \ref{cond0}--\ref{cond2} to apply Lemma~\ref{divisionlemma} to $D'$ to get 4 sets, $R_1$, $R_2$, $R_3$ and $R_4$ in $B_3'$, each with size $r$.
First, for \ref{cond0}, as $n'\leq 2k\ell$, we have
\[
\left(\frac{\eps}{2}\right)^2\cdot \frac{r^2}{n'}=\Omega_\eps\left(\frac{n}{\log n}\right)=\omega(1).
\]
Next, for \ref{cond1}, we have, as $n'\leq 2k\ell$,
\[
\left(\frac{\eps}{2}\right)^2\cdot\frac{r^2}{n'}\cdot\exp\left(\frac{r\delta'}{24n'}\right)
=\Omega_\eps\left(
n'\cdot\left(\frac{r}{k\ell}\right)^2\cdot \exp\left(\frac{r\delta'}{48k\ell}\right)\right)
\overset{\eqref{kevin}}{=}\Omega_\eps\left( n'\cdot\left(\frac{1}{\lg{2}}\right)^2\cdot\exp\left(\omega(d_0)\right)\right)
=\omega(n').
\]
Furthermore, for \ref{cond3}, we have
\[
\exp\left(\frac{r\delta'}{24n'}\right)\geq \exp\left(\frac{r\delta'}{24k\ell}\right)
\overset{\eqref{kevin}}{=}\exp\left(\omega(d_0)\right)=\omega(\bar{\Delta}^2).
\]
Finally, we have that $\log(en'/m)\leq\log(2ek\ell/m)=O(\log(d\lg{2}))$, so that
\[
\left(\frac{\eps}{2}\right)^2\cdot\frac{r}{10^3}=\Omega_\eps\left(\frac{n\lg{2}}{\log n\cdot\lg{6}}\right)
=\Omega_\eps\left(m\cdot\frac{d\lg{2}}{\lg{3}\cdot\lg{6}}\right)=\omega\left(m \cdot\log\left(\frac{en'}{m}\right)\right).
\]

Having checked the appropriate conditions, by Lemma~\ref{divisionlemma} and \eqref{kevin}, for sufficiently large $n$, there are disjoint sets $R_1,R_2,R_3,R_4$ in $B_3'$ so that the following holds.
%\stepcounter{alabel}
\begin{enumerate}[label = \textbf{\Alph{alabel}3}]
\item For each $v\in B_2'\cup B_3'$ and $i\in [4]$, $d^\pm(v,R_i)\geq 40d_0$, and, for each $U\subset B_2'\cup B_3'$ with $|U|=m$, $|N^\pm(U,R_i)|\geq (1/2+\eps/4)|R_i|$. \label{door51}
\end{enumerate}
Let $B_2=(B_2'\cup B_3')\setminus (R_1\cup R_2\cup R_3\cup R_4)$, so that $|B_2|=a_3+a_3-4r=k\ell$. Note that we have chosen our set sizes so that \ref{door1} holds. Note further that $B_2'\subset B_2$, so that, by \ref{eqsub1}, for each $U\subset V(D)$ with $|U|= m$,
\begin{equation}\label{newnewkevin}
|N^{\pm}(U,B_2)|\geq (1/2+\eps/2)|B_2'|=(1/2+\eps/2)(1-\eps/5)k\ell\geq (1/2+\eps/4)|B_2|.
\end{equation}
By \ref{eqsub1}, \ref{eqsub2}, \eqref{newkevin} and \eqref{kevin}, and for each $v\in V(D)$ we have, for sufficiently large $n$, $4d_0k\leq d^\pm(v,B_2)\leq \bar{\Delta}$. Therefore, in combination with \eqref{newnewkevin}, we have that \ref{door4} holds.

Note that, for each $v\in B_2\cup R_1\cup R_2\cup R_3\cup R_4=V(D')$ and $i\in [4]$, we have $d^\pm(v,R_i)\leq \bar{\Delta}=d(\lg{2})^2$. Therefore, with \ref{door51}, we have that \ref{door5} holds. This completes the proof that \ref{door1}--\ref{door5} hold, so we have found the partition as required.
\end{proof}

We now combine the work in the last few sections to find good partitions.
\begin{lemma}\label{goodpart} For each $\e>0$, there exists $n_0=n_0(\e)$ such that the following holds for every $d\geq 10^{-5}$. Every $(d,\e)$-pseudorandom digraph $D$ with at least $n_0$ vertices has a good partition.
\end{lemma}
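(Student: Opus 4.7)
The plan is to apply Lemma~\ref{goodpart0} with the same parameters $k,\ell,d_0,m,r$ to obtain a partition $V(D)=A\cup B_1\cup B_2\cup R_1\cup R_2\cup R_3\cup R_4$ satisfying \ref{door1}--\ref{door5}, and then to verify that this partition is $(\ell,r)$-good in the sense of Definition~\ref{gooddefn}, i.e.\ to check each of \ref{D3}--\ref{D5}.

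Property \ref{D3} follows by applying Theorem~\ref{shortpathpicky} to the set $A$: the size bound $|A|=\eps n/40\geq n/\lg{3}$ and the minimum-degree and expansion bounds are precisely \ref{door2}, while $4r\leq|A|\lg{2}/(\log n\cdot\lg{7})$ holds comfortably for large $n$. Property \ref{D4} follows by applying Lemma~\ref{finalpaths} to $B_1,B_2$: its hypotheses \ref{house1}--\ref{house3} are exactly \ref{door3}--\ref{door4}, and for any admissible $B',u,v$ the residual set $V=B'\setminus(B_1\cup B_2\cup\{u,v\})$ has size at most $\eps n/40+4r-2\leq\eps|B_1|/20-2$ since $|B_1|\geq n(1-\eps/35)$ and $r=o(n)$.

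For \ref{D2}, the three matchings are produced by Proposition~\ref{matchymatchy} applied to the bipartite graphs of directed edges between $R_2,R_1$ and $R_2,R_3$ and $R_4,R_3$: Hall's condition holds because for $|U|\leq m$ the bound $40d_0=40d\lg{2}/\lg{4}$ from \ref{door5} combined with \ref{pseud1} gives $|N(U)|\geq 10|U|$, while for $m\leq|U|\leq\lceil r/2\rceil$ the expansion in \ref{door5} gives $|N(U)|\geq(1/2+\eps/4)r\geq|U|$. With these matchings in hand the bijection $f:R_1\to R_4$ is forced by the alternating $(M_1\cup M_2\cup M_3)$-paths of length $3$, one from each element of $R_1$ to its image in $R_4$; let $D'$ be the resulting merged digraph and $R=\{z_1,\ldots,z_r\}$ the set of merged vertices.

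The main obstacle will be \ref{D5}: showing that any $U\subset B_2$ with $|U|\leq 2\ell$ is weakly connected in $D'[R\cup U]$. The approach is to apply Theorem~\ref{longpathunpicky} to $D'$ with $A$ replaced by $R$ and $V$ by $U$; the set-size requirements are easy, since $|R|=r$ lies in $[n\lg{2}/(\log n\cdot\lg{4}),n(\lg{2})^3/\log n]$ and $2\ell=o(r/\lg{6})$, but the conclusion rests on verifying that $D'$ is pseudorandom and that the theorem's degree and expansion hypotheses hold for $R\cup U$. The critical observation is that an edge $z_i\to z_j$ of $D'$ corresponds to $\vec{y_ix_j}\in E(D)$, an edge $z_i\to w$ (respectively $w\to z_i$) for a non-merged $w$ corresponds to $\vec{y_iw}\in E(D)$ (respectively $\vec{wx_i}\in E(D)$), and edges among non-merged vertices are unchanged. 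Hence, identifying each $z_i$ with $y_i$ for out-neighbourhoods and with $x_i$ for in-neighbourhoods, in- and out-neighbourhoods in $D'$ are in bijection -- up to at most one lost edge per merged vertex -- with those in $D$ between appropriate subsets of $V(D)$. Using this correspondence, properties \ref{pseud0}--\ref{pseud3} transfer from $D$ to $D'$ with $\eps$ weakened to $\eps/2$, the lower bound $d^\pm_{D'}(v,R)\geq 40d_0-1$ and the expansion bound $|N^\pm_{D'}(W,R)|\geq(1/2+\eps/8)|R|$ follow from \ref{door5} (the additive $-1$ being absorbed in the slack of the constant $40$), and the upper bound $d^\pm_{D'}(v,R\cup U)\leq d(\lg{2})^3$ follows by combining \ref{door4} with the upper bounds on $d^\pm_D(v,R_i)$ for $v\in B_2\cup R_1\cup R_2\cup R_3\cup R_4$ extractable from the second application of Lemma~\ref{divisionlemma} in the proof of Lemma~\ref{goodpart0}. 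Theorem~\ref{longpathunpicky} then produces the desired weak connectivity and completes the proof.
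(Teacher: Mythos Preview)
Your proposal is correct and follows exactly the same approach as the paper: apply Lemma~\ref{goodpart0}, then verify \ref{D3} via Theorem~\ref{shortpathpicky}, \ref{D4} via Lemma~\ref{finalpaths}, \ref{D2} via Proposition~\ref{matchymatchy} with \ref{pseud1}, and \ref{D5} via Theorem~\ref{longpathunpicky} applied to the merged digraph $D'$ with $A=R$ and $V=U$.

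If anything, you are more scrupulous than the paper about the application of Theorem~\ref{longpathunpicky}: the paper simply cites \ref{crack11} and \ref{crack21} (the analogues of your lower degree and expansion bounds from \ref{door5}) and invokes the theorem, without explicitly checking that $D'$ is $(d,\eps')$-pseudorandom or that the upper bound $d^\pm_{D'}(v,R\cup U)\leq d(\lg{2})^3$ holds. Your observation that neighbourhoods in $D'$ correspond to neighbourhoods in $D$ via $z_i\leftrightarrow y_i$ (for out) and $z_i\leftrightarrow x_i$ (for in) is the right way to see this, though the loss is not ``at most one edge per merged vertex'' but rather all edges into $R_4$ (for out-degree) or from $R_1$ (for in-degree); since $d^\pm_D(v,B_3')\leq 4a_4\Delta/n=o(d\lg{2})$ for every $v\in V(D)$ by the first application of Lemma~\ref{divisionlemma} inside Lemma~\ref{goodpart0}, this is harmless. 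Your appeal to bounds ``extractable from the proof'' of Lemma~\ref{goodpart0} is a mild stylistic sin (one should really record the needed upper degree bounds in the statement of \ref{door5}), but the paper is no more careful on this point.
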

\begin{proof} Let
\[
k = \lg{2},\;\;\;
\ell=\frac{n\log^{[2]}n}{\log n \cdot\log^{[5]}n},\;\;\;d_0=\frac{d\lg{2}}{\lg{4}},\;\;\;
m=\frac{n\lg{3}}{d\log n},\;\;\;
\text{ and }
\;\;\;
r=\frac{n\log^{[2]}n}{\log n \cdot\log^{[6]}n}.
\]
Let $D$ be an $n$-vertex $(d,\e)$-pseudorandom digraph. Letting $n$ be sufficiently large, by Lemma~\ref{goodpart0} we can find a partition $V(D)=A\cup B_1\cup B_2\cup R_1\cup R_2\cup R_3\cup R_4$ such that \ref{door1}--\ref{door5} hold.
By Theorem~\ref{shortpathpicky}, \ref{door1} and \ref{door2}, and observing that
\[
r=O\left(\frac{|A|\cdot\lg{2}}{\log n\cdot \lg{6}}\right)=o\left(\frac{|A|\cdot\lg{2}}{\log n\cdot \lg{7}}\right),
\]
we have, for sufficiently large $n$, that \ref{D3} holds in Definition~\ref{gooddefn}.
By Lemma \ref{finalpaths}, \ref{door3} and \ref{door4}, and as $|A\cup R_1\cup R_2\cup R_3\cup R_4|\leq \eps n/40+4r$, for sufficiently large $n$ we have that \ref{D4} holds in Definition~\ref{gooddefn}.

Using \ref{door5} and \ref{pseud1}, by the simple reasoning at the end of the proof of Lemma~\ref{initialpaths}, the conditions in Proposition~\ref{matchymatchy} hold for the edges directed from $R_i$ into $R_j$ for any $j\neq i$. Therefore, we can find matchings $M_1$, $M_2$ and $M_3$ from $R_2$ into $R_1$, $R_2$ into $R_3$ and $R_4$ into $R_3$ in $D$, respectively.

 Let $f:R_1\to R_4$ come from the matchings $M_1$, $M_2$, and $M_3$, and suppose each vertex $v\in R_1$ is merged into $f(v)$ in $D$ to get the digraph $D'$ (as in \ref{D5}). Let $R$ be the set of merged vertices in $D'$. For each $v\in R$, let $v^-\in R_1$ and $v^+\in R_4$ be such that $v^-$ is merged into $v^+$ to create $v$. For each $U\subset V(D')$, let $U^-=(U\setminus R)\cup\{v^-:v\in U\cap R\}$ and $U^+=(U\setminus R)\cup \{v^+:v\in U\cap R\}$. By \ref{door4} and \ref{door5}, we have that the following hold.
\stepcounter{alabel}
\begin{enumerate}[label = \textbf{\Alph{alabel}\arabic{enumi}}]
\item For each $v\in R\cup B_2$, we have
$d_{D'}^+(v,R)= d_D^+(v^+,R_1)\geq 40d_0$ and $d_{D'}^-(v,R)= d_D^-(v^-,R_4)\geq 40d_0$.\label{crack11}
\item For each $v\in R\cup B_2$, we have
$d_{D'}^+(v,R\cup B_2)= d_D^+(v^+,R_1\cup B_2)\leq 2d(\lg{2})^2$ and $d_{D'}^-(v,R\cup B_2)= d_D^-(v^-,R_4\cup B_2)\leq 2d(\lg{2})^2$. Thus, $\Delta^\pm(D'[R\cup B_2])\leq 2d(\lg{2})^2$. \label{newbie}
\item For each $U\subset R\cup B_2$, with $|U|=m$, we have
$|N_{D'}^+(U,R)|\geq |N_D^+(U^+,R_1)|\geq (1/2+\eps/8)r$ and $|N_{D'}^-(U,R)|\geq  |N_D^-(U^-,R_4)|\geq (1/2+\eps/8)r$.\label{crack21}
\end{enumerate}
Note that
\[
\ell=\frac{r\lg{6}}{\lg{5}}=o\left(\frac{r}{\lg{6}}\right).
\]
Thus, for sufficiently large $n$, for any $V\subset B_2$ with $|V|\leq 2\ell$, by Theorem~\ref{longpathunpicky}, \ref{crack11}, \ref{newbie} and \ref{crack21}, $V$ is weakly connected in $D'[R\cup V]$. Therefore, \ref{D2} and \ref{D5} hold in Definition~\ref{gooddefn}.

Therefore, \ref{D3}--\ref{D5} hold for the partition $V(D)=A\cup B_1\cup B_2\cup R_1\cup R_2\cup R_3\cup R_4$, and thus $D$ has an $(\ell,r)$-good partition.
\end{proof}

Theorem~\ref{pseudorandom} now follows immediately from Lemma~\ref{goodisgood} and Lemma~\ref{goodpart}.

%%%%%%%%%%%%%%%%%%%%%%%%%%%%%%%%%%%%%%%%%%%%%%%%%%%%%%%%%%%%%%%%%%%%%%%%%%%%%%%%%%%%%%%%%%%%%%%%%%%%%%%%%%%%%%%%%%%%%%%%%%%
%%%%%%%%%%%%%%%%%%%%%%%%%%%%%%%%%%%%%%%%%%%%%%%%%%%%%%%%%%%%%%%%%%%%%%%%%%%%%%%%%%%%%%%%%%%%%%%%%%%%%%%%%%%%%%%%%%%%%%%%%%%

\section{Pseudorandomness of random digraphs}\label{secrand}

In this section, we study the pseudorandom properties of digraphs in the random digraph process, allowing us then to apply Theorem~\ref{pseudorandom} to prove Theorem~\ref{directedresilience}. This section is organised as follows. First, in Section~\ref{sec71}, we give some simple results on maximum and minimum in- and out-degree, to later show that \ref{pseud0} resiliently holds in Definition~\ref{pseuddefn}.  Next, in Section~\ref{sec72}, with \ref{pseud3} in mind, we give a simple result concerning the edges between sets.
Then, in Section~\ref{sec73}, we prove a result showing expansion will follow from minimum degree conditions (Lemma~\ref{mindegexp}), which will allow us to show that \ref{pseud1} and \ref{pseud2} hold.
In Section~\ref{sec74}, we then study the vertices with low in- and out-degree in the digraphs early in the random digraph process. After recording together all the properties we use, in Section~\ref{sec75} we then prove the resilience of Hamiltonicity in the random digraph process needed for Theorem~\ref{directedresilience}. Finally, in Section~\ref{secnonres}, we study the limits of the resilience of Hamiltonicity to complete the proof of Theorem~\ref{directedresilience}.

It will often be convenient to show that properties are likely in $D(n,p)$, before inferring these properties are also likely in the random digraph process. Let $D_{n,M}$ be the random digraph with $n$ vertices and $M$ edges, chosen uniformly at random from all such digraphs. Note that, in the $n$-vertex random digraph process $D_0,D_1,\ldots,D_{n(n-1)}$, for each $0\leq M\leq n(n-1)$, $D_M$ is distributed as $D_{n,M}$. We use the following standard proposition to relate $D_{n,M}$ and $D(n,p)$ (see, for example,~\cite{rainbowarb,pittel}).

\begin{prop}\label{convert} Let $\mathcal{P}$ be a digraph property and let $p=M/n(n-1)$. If $M=M(n)\to\infty$ is any function such that $M(1-p)\to\infty$, then, for sufficiently large $n$,
\[
\P(D_{n,M}\text{ satisfies }\mathcal{P})\leq 5\sqrt{M}\cdot \P(D(n,p)\text{ satisfies }\mathcal{P}).
\]
\end{prop}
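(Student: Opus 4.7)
The plan is to exploit the standard observation that $D(n,p)$, conditioned on having exactly $M$ directed edges, is distributed as $D_{n,M}$. Setting $N=n(n-1)$ (the number of possible ordered edges) and noting that the edge count $e(D(n,p))$ is distributed as $\mathrm{Bin}(N,p)$, I would decompose
\[
\P(D(n,p)\in \mathcal{P})=\sum_{k=0}^{N}\P(D_{n,k}\in\mathcal{P})\cdot \P(\mathrm{Bin}(N,p)=k)\;\geq\; \P(D_{n,M}\in\mathcal{P})\cdot \P(\mathrm{Bin}(N,p)=M).
\]
Rearranging this already gives the conclusion provided I can establish the point-probability bound $\P(\mathrm{Bin}(N,p)=M)\geq 1/(5\sqrt{M})$ for all $n$ large enough.

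For that bound, since $Np=M$ by the choice of $p$, I would apply Stirling's formula to the three factorials appearing in
\[
\P(\mathrm{Bin}(N,p)=M)=\binom{N}{M}\left(\tfrac{M}{N}\right)^M\left(\tfrac{N-M}{N}\right)^{N-M}.
\]
The exponential factors cancel exactly, and the $\sqrt{\cdot}$ factors from Stirling collapse to give
\[
\P(\mathrm{Bin}(N,p)=M)=\frac{1+o(1)}{\sqrt{2\pi M(1-p)}},
\]
where the $o(1)$ error is controlled by the standard Stirling error terms of size $O(1/M)$ and $O(1/(N-M))$. Here is where the hypotheses $M\to\infty$ and $M(1-p)\to\infty$ are used: the former ensures $M$ is large, while $M(1-p)\to\infty$ forces $N-M=M(1-p)/p\to\infty$, so both Stirling error terms vanish.

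Finally, since $1-p\leq 1$ gives $\sqrt{M(1-p)}\leq\sqrt{M}$, and since $\sqrt{2\pi}<5/(1-o(1))$ for $n$ large enough (numerically $\sqrt{2\pi}\approx 2.507$, so any constant larger than this over $5$ suffices), I obtain
\[
\P(\mathrm{Bin}(N,p)=M)\geq \frac{1}{5\sqrt{M(1-p)}}\geq \frac{1}{5\sqrt{M}}
\]
for sufficiently large $n$, completing the proof. There is no substantive obstacle here; the only point of care is ensuring that the joint condition $M\to\infty$ and $M(1-p)\to\infty$ is precisely what makes both Stirling applications valid, and that $1-p\leq 1$ (used to discard the $1-p$ factor at the end) introduces no loss beyond the absolute constant $5$.
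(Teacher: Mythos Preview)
Your proof is correct. The paper does not supply its own argument for this proposition but cites it as standard (references~\cite{rainbowarb,pittel}); your conditioning-on-the-edge-count argument together with the Stirling estimate for the point mass $\P(\mathrm{Bin}(N,p)=M)$ is precisely the standard proof found in those sources.
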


\subsection{Maximum and minimum degree}\label{sec71}
We will use the following standard result, which implies that, almost surely, each digraph $D_M$ in the $n$-vertex random digraph process with $M\leq n(\log n-\lg{2})$ is not Hamiltonian.

\begin{lemma}\label{when0}(See \cite{ER64mat}) If
$M=n(\log n-\lg{2})$, then, $D=D_{n,M}$ almost surely satisfies $\delta^+(D)=0$ or $\delta^-(D)=0$.
\end{lemma}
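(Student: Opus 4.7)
The plan is to prove the stronger statement that $\delta^+(D_{n,M})=0$ almost surely, which immediately implies the lemma. I will first establish this in the binomial model $D(n,p)$ with $p=M/n(n-1)$, exploiting the fact that in a digraph the out-edges of distinct vertices occupy disjoint edge slots, and then transfer to $D_{n,M}$ via Proposition~\ref{convert}.

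First I would set $p=M/n(n-1)$, so that $(n-1)p=\log n-\lg{2}$. Let $X_v$ be the indicator that vertex $v$ has out-degree zero in $D(n,p)$, and let $X=\sum_v X_v$. The key observation is that the edges $\vec{vw}$ counted by $X_v$ and the edges $\vec{uw}$ counted by $X_u$ (for $u\neq v$) involve entirely disjoint edge slots in the directed graph (since their tails differ), so the indicators $X_v$ are mutually independent. A routine computation gives $(1-p)^{n-1}=(1+o(1))\log n/n$, and therefore
\[
\mathbb{P}(X=0)=\prod_{v}\bigl(1-(1-p)^{n-1}\bigr)\leq \exp\!\left(-n(1-p)^{n-1}\right)=\exp(-(1+o(1))\log n)=n^{-1+o(1)}.
\]

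Next I would apply Proposition~\ref{convert} with the property $\mathcal{P}$ = ``$\delta^+\geq 1$''. Since $M\to\infty$ and $p\to 0$, we have $M(1-p)\to\infty$, so the proposition gives
\[
\mathbb{P}(\delta^+(D_{n,M})\geq 1)\leq 5\sqrt{M}\cdot \mathbb{P}(\delta^+(D(n,p))\geq 1)\leq 5\sqrt{n\log n}\cdot n^{-1+o(1)}=n^{-1/2+o(1)}\to 0.
\]
Hence $\delta^+(D_{n,M})=0$ almost surely, which proves the lemma.

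There is essentially no serious obstacle: the only subtlety is the independence of the $X_v$, which relies on the directed (as opposed to undirected) nature of the edge slots, and on the asymptotic computation that $n(1-p)^{n-1}=\log n-o(\log n)$ is large enough that the $\sqrt{M}$ factor lost in the transfer from $D(n,p)$ to $D_{n,M}$ is swamped by the tail bound. Note that the same argument applied to in-degrees gives $\delta^-(D_{n,M})=0$ almost surely as well, but we only need either statement.
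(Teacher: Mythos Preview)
Your argument is correct: the independence of the out-degree-zero indicators (because the out-edge slots of distinct vertices are disjoint) gives a clean exact formula for $\P(\delta^+(D(n,p))\geq 1)$, the asymptotic $(1-p)^{n-1}=(1+o(1))\log n/n$ is right, and the $\sqrt{M}$ loss from Proposition~\ref{convert} is easily absorbed.

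The paper itself does not supply a proof of this lemma; it simply cites the classical Erd\H{o}s--R\'enyi result~\cite{ER64mat}. Your self-contained argument is therefore a genuine addition rather than a comparison, and in fact proves the stronger statement that $\delta^+(D_{n,M})=0$ (and symmetrically $\delta^-(D_{n,M})=0$) almost surely, not merely that one of the two fails.
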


When $M\geq 50n\log n$, each digraph $D_M$ in the random digraph process will likely have well-bounded minimum and maximum degrees, as follows.

\begin{lemma}\label{min-degree}
In almost every $n$-vertex random digraph process $D_0,D_1,\ldots,D_{n(n-1)}$, if $M\geq 50n\log n$, then $\delta^\pm(D_M)\geq M/2n$ and $\Delta^\pm(D_M)\leq 2M/n$.
\end{lemma}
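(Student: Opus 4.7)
The plan is to use $D_M\stackrel{d}{=}D_{n,M}$, transfer to $D(n,p)$ via Proposition~\ref{convert}, apply Chernoff (Lemma~\ref{chernoff}) to each $d^\pm(v)$, and union-bound over vertices, directions, and all admissible~$M$.

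First I would treat the range $50n\log n \le M \le n(n-1)/2$. Setting $p = M/(n(n-1))\le 1/2$, we have $M(1-p)\ge M/2 \to \infty$, which validates Proposition~\ref{convert}. In $D(n,p)$ each $d^\pm(v)$ is $\mathrm{Bin}(n-1,p)$ with mean exactly $M/n \ge 50\log n$, so Lemma~\ref{chernoff} with $\varepsilon=1/2$ gives
\[
\P\bigl(d^\pm(v)\notin[M/(2n),\,3M/(2n)]\bigr) \le 2\exp(-M/(12n)) \le 2n^{-25/6}.
\]
Since $3M/(2n) \le 2M/n$, both the lower bound $\delta^\pm \ge M/(2n)$ and the upper bound $\Delta^\pm \le 2M/n$ are controlled. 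A union bound over the $2n$ vertex/direction pairs yields failure probability at most $8n^{-19/6}$ in $D(n,p)$, and then Proposition~\ref{convert} bounds the failure probability in $D_{n,M}$ by $5\sqrt{M}\cdot 8n^{-19/6} \le 40n^{-13/6}$. Summing over the at most $n^2/2$ values of $M$ in this range contributes $O(n^{-1/6}) = o(1)$.

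For the dense range $n(n-1)/2 < M \le n(n-1)$, the upper bound $\Delta^\pm(D_M)\le n-1 \le 2M/n$ is automatic. For $\delta^\pm(D_M)\ge M/(2n)$, I would pass to the complementary digraph $\bar D_M \stackrel{d}{=} D_{n,\bar M}$ with $\bar M = n(n-1)-M$, turning the required inequality into $\max_v \bar d^\pm(v) \le (n-1)/2 + \bar M/(2n)$. When $\bar M \ge 50n\log n$, Proposition~\ref{convert} applies since $\bar M(1-\bar M/(n(n-1))) = \bar M M/(n(n-1)) \ge 25n\log n \to \infty$, and the same Chernoff calculation gives $\bar d^\pm(v) \le (3/2)\bar M/n$, which is bounded by $(n-1)/2 + \bar M/(2n)$ because $\bar M \le n(n-1)/2$. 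When $\bar M < 50n\log n$, a direct Chernoff bound on $\mathrm{Bin}(n-1,\bar M/(n(n-1)))$ (mean $O(\log n)$) keeps $\max_v \bar d^\pm(v) = O(\log n) \ll (n-1)/2$, again with negligible union-bounded failure. The main obstacle is this densest regime, where $M(1-p)\not\to\infty$ so Proposition~\ref{convert} cannot be used directly on $D_{n,M}$; passing to the complement circumvents this by reducing back to the sparse-process Chernoff argument already developed.
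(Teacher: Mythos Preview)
Your argument for the range $50n\log n\le M\le n(n-1)/2$ is exactly the paper's: Chernoff in $D(n,p)$, union bound over vertices and directions, transfer via Proposition~\ref{convert}, then sum over $M$.

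Where you diverge is the dense range. You perceive an obstacle (``$M(1-p)\not\to\infty$'') that is much smaller than you think: in fact $M(1-p)=M(n(n-1)-M)/(n(n-1))\ge (n(n-1)-M)/2$ once $M\ge n(n-1)/2$, so Proposition~\ref{convert} applies for every $M\le n(n-1)-\log n$. The paper simply runs the same Chernoff/transfer argument over the whole interval $[50n\log n,\,n(n-1)-\log n]$, and then handles the last $\log n$ values deterministically: if $M\ge n(n-1)-\log n$ then at most $\log n$ edges are missing, so $\delta^\pm(D_M)\ge n-1-\log n\ge M/(2n)$ and $\Delta^\pm(D_M)\le n-1\le 2M/n$.

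Your complement detour is correct in outline but more work than needed, and the sub-case $\bar M<50n\log n$ is not fully justified as written: Lemma~\ref{chernoff} only allows $\varepsilon\le 3/2$, so it cannot show that a binomial with mean $O(\log n)$ stays below $(n-1)/2$, and you have not explained how you transfer from $D(n,\bar p)$ to $D_{n,\bar M}$ when $\bar M$ is bounded. Both issues are easily patched (e.g.\ a crude hypergeometric tail bound, or just the paper's deterministic endgame), but the cleanest fix is to drop the complement entirely and follow the paper's simpler split at $M=n(n-1)-\log n$.
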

\begin{proof}
For each $M\geq 50n\log n$, let $p_M=M/n(n-1)$ and $\bar{D}_M=D(n,p_M)$. For each $v\in V(D)$ and $j\in \{+,-\}$, $\E(d^j_{\bar{D}_M}(v))=(n-1)p_M=M/n\geq 50\log n$, so that, by Lemma~\ref{chernoff},
\[
\P(d^j_{\bar{D}_M}(v)< M/2n\text{ or }d^j_{\bar{D}_M}(v)> 2M/n)\leq 2\exp(-50\log n/12)=o(n^{-4}).
\]
Therefore, by a union bound, with probability $1-o(n^{-3})$, $\delta^\pm(\bar{D}_M)\geq M/2n$ and $\Delta^\pm(\bar{D}_M)\leq 2M/n$.

Now, for $50n\log n\leq M\leq n(n-1)-\log n$, by Proposition~\ref{convert}, with probability $1-o(n^{-2})$, $\delta^\pm(D_M)\geq M/2n$ and $\Delta^\pm(D_M)\geq 2M/n$. Thus, by a union bound, this property almost surely holds for each $50n\log n\leq M\leq n(n-1)-\log n$ in the random digraph process. Finally, note that, for each $M\geq n(n-1)-\log n$, $\delta^\pm(D_{M})\geq n-1-\log n\geq M/2n$, for sufficiently large $n$, and $\Delta^\pm(D_{M})\leq n-1\leq 2M/n$.
\end{proof}

We also need a maximum in- and out-degree condition earlier in the random digraph process, as follows.

\begin{lemma}\label{maxdegree} For each $n\log n/2\leq M\leq 50n\log n$, if $D=D_{n,M}$, then $\P(\Delta^\pm(D)\leq 100M/n)=1-o(n^{-2})$.
Furthermore, then, in almost every $n$-vertex random digraph process $D_0,D_1,\ldots,D_{n(n-1)}$, every digraph $D_M$ with $M\geq n\log n/2$ has $\Delta^{\pm}(D)\leq 100M/n$.
\end{lemma}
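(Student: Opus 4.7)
The plan is to establish the single-$M$ tail bound by reducing to $D(n,p)$ and applying a standard large-deviation estimate on binomial tails, and then to deduce the ``furthermore'' clause by a union bound over $M$, combined with Lemma~\ref{min-degree} to handle the regime $M \geq 50n\log n$.

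For the first claim, fix $M$ with $n\log n/2 \leq M \leq 50n\log n$ and set $p = M/n(n-1)$. In $D(n,p)$, each $d^\pm(v)$ is distributed as $\mathrm{Bin}(n-1,p)$ with mean $M/n \geq (\log n)/2$. Since the version of Chernoff's inequality recorded as Lemma~\ref{chernoff} only covers relative deviations of at most $3/2$, whereas here we need a deviation by a factor of $100$, I would instead use the elementary binomial tail
\[
\P(\mathrm{Bin}(n-1,p) \geq k) \leq \binom{n-1}{k} p^k \leq \left(\frac{e(n-1)p}{k}\right)^k,
\]
applied with $k = 100M/n$. This yields a per-vertex, per-direction bound of $(e/100)^{100M/n} \leq e^{-100M/n} \leq n^{-50}$, and a union bound over the $2n$ in- and out-degrees gives $\P(\Delta^\pm(D(n,p)) > 100M/n) = o(n^{-48})$. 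Proposition~\ref{convert} applies since $M \to \infty$ and $M(1-p) \to \infty$, and multiplying by the factor $5\sqrt{M} = O(\sqrt{n\log n})$ transfers the bound to $\P(\Delta^\pm(D_{n,M}) > 100M/n) = o(n^{-47})$, comfortably within the required $o(n^{-2})$.

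For the ``furthermore'' clause, I would use that $D_M$ in the random digraph process is distributed as $D_{n,M}$, and then take a union bound over the $O(n \log n)$ integer values of $M$ with $n\log n/2 \leq M \leq 50n\log n$. The probability that some such $D_M$ fails the bound is $O(n\log n)\cdot o(n^{-47}) = o(1)$. For $M \geq 50n\log n$, Lemma~\ref{min-degree} already guarantees almost surely that $\Delta^\pm(D_M) \leq 2M/n \leq 100M/n$. Stitching the two ranges together completes the statement.

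There is essentially no serious obstacle here; the only mild subtlety is that the two-sided Chernoff bound stated in Lemma~\ref{chernoff} does not cover the factor-$100$ deviation we need, and is replaced by the standard one-sided estimate $(e\mu/k)^k$, which handles this regime with room to spare and in fact yields a polynomial probability bound much stronger than what is required.
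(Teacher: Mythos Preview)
Your proposal is correct and follows essentially the same route as the paper: reduce to $D(n,p)$, bound the upper tail of a single $\mathrm{Bin}(n-1,p)$ by $\binom{n-1}{k}p^k\leq (e(n-1)p/k)^k$ with $k=100M/n$, union bound over vertices and directions, transfer to $D_{n,M}$ via Proposition~\ref{convert}, and then union bound over $M$ together with Lemma~\ref{min-degree} for $M\geq 50n\log n$. Your observation that Lemma~\ref{chernoff} does not directly cover a factor-$100$ deviation, and that one therefore uses the elementary binomial tail instead, is exactly what the paper does (without commenting on it).
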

\begin{proof} The required bounds on the maximum in- and out-degree almost surely hold for each $M\geq 50n\log n$ by Lemma~\ref{min-degree}.
For each $n\log n/2\leq M\leq 50n\log n$, let $p_M=M/n(n-1)$ and $\bar{D}_M=D(n,p_M)$. For each $v\in V(D)$ and $j\in \{+,-\}$, we have
\[
\P\left(d^j_{\bar{D}_M}(v)\geq \frac{100M}{n}\right)\leq \binom{n-1}{100M/n}p_M^{100M/n}\leq \left(\frac{enp_M}{100M/n}\right)^{100M/n}
\leq \left(\frac{e}{50}\right)^{100M/n}=o(n^{-4}).
\]
Therefore, with probability $1-o(n^{-3})$, $\Delta^\pm(\bar{D}_M)\leq 100M/n$.
Thus, by Proposition~\ref{convert}, for each $n\log n/2\leq M\leq 50n\log n$, with probability $1-o(n^{-2})$, $\Delta^\pm(D_{n,M})\leq 100M/n$, as required. Finally, by a union bound, this property almost surely holds for each $n\log n/2\leq M\leq 50n\log n$ in the random digraph process.
\end{proof}

\subsection{Edges between sets}\label{sec72}
We will use the following simple proposition on the typical number of edges between sets in $D(n,p)$.

\begin{prop}\label{msets} Let $\eps>0$, $p\geq 1/n$ and $m=\lg{4}/p$. Then, with probability $1-o(n^{-3})$ in $D(n,p)$, if sets $A,B\subset V(D_M)$ satisfy $|A|\geq m/2$ and $|B|\geq n/10$, then
\[
(1-\e)p|A||B|\leq e^\pm(A,B)\leq (1+\e)p|A||B|.
\]
\end{prop}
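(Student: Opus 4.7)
The plan is to prove this by a Chernoff bound for each fixed pair $(A,B)$ followed by a union bound over all relevant pairs, using crucially that the parameter $m = \lg{4}/p$ is chosen so that $p|A||B|$ is comfortably larger than $n$.

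First, I would fix $A, B \subset V(D(n,p))$ disjoint with $|A| = a \geq m/2$ and $|B| = b \geq n/3$, and observe that $e^+(A,B)$ is distributed as $\mathrm{Bin}(ab, p)$ with mean $pab$. The hypothesis gives the key lower bound
\[
pab \;\geq\; p \cdot \frac{m}{2} \cdot \frac{n}{3} \;=\; \frac{n\lg{4}}{6}.
\]
Applying Lemma~\ref{chernoff} with the deviation parameter $\eps$, then,
\[
\P\bigl(|e^+(A,B) - pab| \geq \eps pab\bigr) \;\leq\; 2\exp\!\left(-\frac{\eps^2 pab}{3}\right) \;\leq\; 2\exp\!\left(-\frac{\eps^2 n \lg{4}}{18}\right).
\]
The same bound holds for $e^-(A,B)$.

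Next I would union bound over all choices of $(A,B)$. The number of pairs with $|A| = a$ and $|B| = b$ is at most $\binom{n}{a}\binom{n}{b} \leq 2^{2n} = 4^n$, and there are at most $n^2$ pairs $(a,b)$ to consider. The total failure probability is therefore at most
\[
4\, n^2 \cdot 4^n \cdot \exp\!\left(-\frac{\eps^2 n \lg{4}}{18}\right).
\]
Since $\lg{4} \to \infty$, for $n$ sufficiently large the exponent $\eps^2 \lg{4}/18$ will eventually exceed $\log 4$ by any constant, so this bound is in fact super-polynomially small in $n$, and certainly $o(n^{-3})$.

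There is no real obstacle here: the statement is tailored so that $p|A||B| = \Omega(n \lg{4})$, which provides exactly the slack needed to swallow the $4^n$ factor coming from the number of subsets. The only mild point to notice is that no lower bound on $p$ beyond $p \geq 1/n$ is required, because the burden is carried entirely by the $\lg{4}$ factor in $m$; this is why the same argument would not go through if $m$ were, say, of order $1/p$ alone.
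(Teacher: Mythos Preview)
Your proposal is correct and follows essentially the same argument as the paper: Chernoff on $e^j(A,B)\sim\mathrm{Bin}(|A||B|,p)$ to get a failure probability of $2\exp(-\eps^2 n\lg{4}/18)$ per pair, then a union bound over at most $O(4^n)$ choices of $(A,B,j)$. The paper writes the count as $2(2^n)^2$ rather than your $4n^2\cdot 4^n$, but this is immaterial.
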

\begin{proof} For each such $A$ and $B$, and each $j\in \{0,1\}$, $e^j(A,B)$ is a binomial random variable with expectation $p|A||B|$. (Note that, even when $A$ and $B$ are not disjoint, each edge is counted exactly once in $e^j(A,B)$.) Thus, by Lemma~\ref{chernoff}, we have
\[
\P(|e^j(A,B)-p|A||B||>\e p|A||B|)\leq 2\exp(-\e^2p|A||B|/3)\leq 2\exp(-\e^2n\lg{4}/60)= 2\exp(-\omega(n)).
\]
There are at most $2(2^n)^2$ choices for $j\in \{0,1\}$ and such sets $A$ and $B$. Thus, by a union bound, the property in the proposition holds with probability $1-o(n^{-3})$.
\end{proof}

%%%%%%%%%%%%%%%%%%%%%%%%%%%%%%%%%%%%%%%%%%%%%%%%%%%%%%%%%%%%%%%%%%%%%%%%%%%%%%%%%%%%%%%%%%%%%%%%%%%%%%%%%%%%%%%%%%%%%%%%%%%
%%%%%%%%%%%%%%%%%%%%%%%%%%%%%%%%%%%%%%%%%%%%%%%%%%%%%%%%%%%%%%%%%%%%%%%%%%%%%%%%%%%%%%%%%%%%%%%%%%%%%%%%%%%%%%%%%%%%%%%%%%%

\subsection{Expansion from minimum degree conditions}\label{sec73}
We now prove a lemma used to show both \ref{pseud1} and \ref{pseud2} in Definition~\ref{pseuddefn}. The proof follows a section of the proof by Alon, Krivelevich and Sudakov of Lemma 3.1 in \cite{AKS07}.

\begin{lemma} \label{mindegexp}
Let
\[
p\geq \frac{\log n}{10n},\;\;\;\; d=\frac{p(n-1)}{2\cdot 10^3\log n}\geq 10^{-5},\;\;\;\;m=\frac{n\lg{3}}{d\log n}\;\;\;\; and\;\;\;\; f(n)=o(\lg{3}).
\]
Then, with probability $1-o(n^{-3})$, in $D=D(n,p)$, for any two disjoint sets $A,B\subset [n]$, with $|A|\leq 4m$, and any integer $k$ with $1\leq k\leq n/8m$, and any $j\in \{+,-\}$, if $e_D^j(A,B)\geq dk|A|\lg{2}/f(n)$, then $|B|\geq k|A|$.
\end{lemma}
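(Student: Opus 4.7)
I plan to argue by contrapositive and union bound, following the general strategy of Alon, Krivelevich and Sudakov (Lemma 3.1 in \cite{AKS07}). The aim is to show that with probability $1-o(n^{-3})$, there do not exist disjoint $A, B$ with $|A| \leq 4m$, $|B| < k|A|$ for some $k\in[1, n/(8m)]$, and some $j \in \{+,-\}$ with $e_D^j(A,B) \geq t := dk|A|\lg{2}/f(n)$. I will fix sizes $a = |A|$, $b = |B|$, an integer $k$, and $j\in\{+,-\}$, bound the probability for each such choice, and then take a union bound over the at most $2n^3$ parameter choices.

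For a fixed parameter choice, since $e_D^j(A, B) \sim \mathrm{Bin}(ab, p)$ for any fixed disjoint pair, I have
\[
\P(\exists A, B \text{ of sizes } a, b, A\cap B = \emptyset, e_D^j(A, B) \geq t) \;\leq\; \binom{n}{a}\binom{n-a}{b}\,\P\!\bigl(\mathrm{Bin}(ab, p) \geq t\bigr).
\]
Setting $\mu := abp$, the standard Chernoff estimate $\P(\mathrm{Bin}(N, p)\geq t) \leq (eNp/t)^t$ gives the upper bound $(e\mu/t)^t$. The crucial calculation is the ratio
$t/\mu = dk\lg{2}/(f(n)\,b\,p)$; substituting $p = 2\cdot 10^3 d\log n/(n-1)$ and using $\log\lg{2} = \lg{3}$ yields
\[
\log(t/\mu) \;=\; \log(n/b) + \log k + \lg{3} - \lg{2} - \log f(n) - O(1).
\]
Since $b < ka$, we have $\log k - \log b \geq -\log a$, hence $\log(t/\mu) \geq \log(n/a) + \lg{3} - \lg{2} - \log f(n) - O(1)$; and since $a \leq 4m = 4n\lg{3}/(d\log n)$, $\log(n/a) \geq \lg{2} - \lg{4} - O(1)$. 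Using $f(n) = o(\lg{3})$ to get $\log f(n) \leq \lg{4}$ eventually, this gives $\log(t/(e\mu)) \geq \lg{3}/2$ for large $n$, so Chernoff yields $\P(\mathrm{Bin}(ab, p)\geq t) \leq \exp(-t\lg{3}/2)$.

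Combining with $\binom{n}{a}\binom{n-a}{b} \leq \exp\bigl(a\log(en/a) + b\log(en/b)\bigr)$ and using the monotonicity of $x\mapsto x\log(en/x)$ on $[0,n]$ to replace $b\log(en/b)$ by $ka\log(en/(ka))$ (valid since $b < ka \leq n/2$), the problem reduces to verifying
\[
\frac{dka\lg{2}\lg{3}}{2f(n)} \;\geq\; a\log(en/a) + ka\log(en/(ka)) + 5\log n
\]
for every valid triple $(a,b,k)$. I expect this to hold in every regime: if $ab < t$, equivalently $a < d\lg{2}/f(n)$, the event is vacuous; otherwise, the factor $d\lg{3}/f(n) = \omega(1)$ (from $f(n) = o(\lg{3})$) dominates the constants appearing in both entropy terms, while the sharper lower bound $\log(t/\mu) \geq \log(n/a) + \lg{3} - \lg{2} - O(\lg{4})$, used in place of the crude $\lg{3}/2$, absorbs $a\log(en/a)$ for small $a$ by providing an extra $\log(n/a)$ factor in the Chernoff exponent.

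The main obstacle will be executing this regime analysis cleanly across the whole range of $a$ and $ka$ — particularly in the intermediate range $\log^c n \leq ka \ll n$ where neither entropy term is negligible — so that the Chernoff exponent simultaneously dominates both binomial-coefficient costs and the $O(n^3)$ union-bound factor.
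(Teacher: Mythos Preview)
Your proposal is correct and follows essentially the same route as the paper's proof: union bound over sizes, the tail estimate $\P(\mathrm{Bin}(N,p)\geq t)\leq (eNp/t)^t$, and the same chain of substitutions exploiting $p=2\cdot 10^3 d\log n/(n-1)$, $a\leq 4m$, and $f(n)=o(\lg{3})$ to force $t/(e\mu)\to\infty$.

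Two small points where the paper streamlines what you outline. First, rather than summing over all $b<ka$, the paper simply takes $|B|=k|A|$ exactly (padding $B$ if necessary, which only increases $e^j(A,B)$); this removes one parameter from the union bound. Second, in place of the open-ended ``regime analysis'' you anticipate, the paper rewrites the bound as
\[
p_{r,k}\;\leq\;\Bigl(\bigl(\tfrac{e^2knp}{d_k}\bigr)^{2k}\bigl(\tfrac{ekrp}{d_k}\bigr)^{d_k-2k}\Bigr)^{r},
\]
checks that $e^2knp/d_k=o(\log n)$ and $ekrp/d_k=o(r/(md\sqrt{\lg{2}}))$, and then splits only on $r<\sqrt{n}$ versus $r\geq\sqrt{n}$: in the first range $r/(md)=O(\log n/\sqrt{n})$ kills everything, and in the second $r\leq 4m$ gives $p_{r,k}\leq 2^{-kr}\leq 2^{-\sqrt{n}}$. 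This two-case split is exactly the clean execution of your regime analysis, and your sharper bound $\log(t/\mu)\geq \log(n/a)+\lg{3}-\lg{2}-O(\lg{4})$ plays the same role as the paper's first factor $(e^2knp/d_k)^{2k}$.
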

\ifproofdone
\else
\pr
For each $k\in [n/8m]$, let $d_k=dk\lg{2}/f(n)$. If $D$ does not have the property in the lemma then there is some $k\leq n/8m$, $j\in \{+,-\}$ and two disjoint sets $A,B\subset V(D)$, where $|A|\leq 4m$, $|B|=k|A|$ and $e_{D}^j(A,B)\geq d_k|A|$ (adding vertices to $B$ if necessary to get equality). For each $r\in [4m]$, let $p_{r,k}$ be the probability no two such sets occur with $|A|=r\leq 4m$ (noting this does not depend on $j$). Then,
\begin{align}\allowdisplaybreaks
 p_{r,k} &\leq \binom{n}{r}\binom{n}{kr}\binom{kr^2}{d_kr}p^{d_kr}\nonumber
\\ &\leq\left(\frac{en}{r}\left(\frac{en}{kr}\right)^{k}\left(\frac{ekrp}{d_k}\right)^{d_k}\right)^r\nonumber
\\ &\leq\left(\left(\frac{en}{r}\right)^{2k}\left(\frac{ekrp}{d_k}\right)^{d_k}\right)^r%\d_kisplaybreak[6]
\nonumber
\\ &=\left(\left(\frac{e^2knp}{d_k}\right)^{2k}\left(\frac{ekrp}{d_k}\right)^{d_k-2k}\right)^r.\label{finalone}
\end{align}
Now,
\begin{equation}\label{add1}
\frac{e^2knp}{d_k}=\frac{e^2np\cdot f(n)}{d\lg{2}}=O\left(\frac{\log n\cdot f(n)}{\lg{2}}\right)=o(\log n).
\end{equation}
Furthermore,
\begin{equation}\label{add2}
\frac{ekrp}{d_k}=\frac{erp\cdot f(n)}{d\lg{2}}=O\left(\frac{r\log n\cdot f(n)}{n\lg{2}}\right)= o\left(\frac{r\cdot\lg{3}\cdot f(n)}{md\cdot\lg{2}}\right)=o\left(\frac{r}{md\cdot(\lg{2})^{1/2}}\right).
\end{equation}
For sufficiently large $n$, we have $d_k\geq 4k$. Therefore, by \eqref{finalone}, \eqref{add1} and \eqref{add2}, we have, for sufficiently large $n$,
\begin{align} p_{r,k}&\leq\left(\log^{2k}n\left(\frac{r}{md\cdot (\lg{2})^{1/2}}\right)^{d_k/2}\right)^r= \left(\log^{2}n\left(\frac{r}{md\cdot (\lg{2})^{1/2}}\right)^{d_k/2k}\right)^{kr}.\label{vienna}
\end{align}
If $r<\sqrt{n}$, then $r/md=O(\log n/\sqrt{n})$, and hence, as $d_k=\omega(k)$ and $kr\geq 1$, $p_{r,k}=o(n^{-4})$.

If $r\geq \sqrt{n}$, then, as $r\leq 4m$, we have, for large $n$, by \eqref{vienna}, that
\begin{align*} p_{r,k}&\leq\left(\log^{2}n\left(\frac{4}{d\cdot(\lg{2})^{1/2}}\right)^{d\lg{2}/2f(n)}\right)^{kr}
\leq\left(\log^{2}n\cdot\exp\left(-\frac{d\lg{2}\cdot\lg{3}}{8f(n)}\right)\right)^{kr}.
\end{align*}
As $d=\Omega(1)$ and $f(n)=o(\lg{3})$, we have that, for sufficiently large $n$,
$p_{r,k}\leq 2^{-kr}\leq 2^{-k\sqrt{n}}=o(n^{-4})$.

Therefore, $2\sum_{r,k}p_{r,k}=o(m\cdot (n/m)\cdot n^{-4})=o(n^{-3})$. Thus, the probability for some $j$, $r$ and $k$ that such a pair $A$, $B$ exists is $o(n^{-3})$.
\oof
\fi

\subsection{Low degree vertices}\label{sec74}

We will treat vertices with low in-degree or low out-degree separately. We will use that, typically, no vertex will have both low in-degree and low out-degree in the digraphs we consider.

\begin{prop}\label{summindeg}
In almost every $n$-vertex random digraph process $D_0,D_1,\ldots,D_{n(n-1)}$, if $M\geq 9n\log n/10$, then, for all $v\in V(D_M)$, $d^+_{D_M}(v)+d^-_{D_M}(v)\geq 2M/10^3n$.
\end{prop}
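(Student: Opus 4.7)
The plan is to split into two regimes by the size of $M$. For $M \geq 50 n\log n$, Lemma~\ref{min-degree} almost surely gives $\delta^\pm(D_M) \geq M/2n$, so $d^+_{D_M}(v) + d^-_{D_M}(v) \geq M/n \geq 2M/10^3 n$ for every $v$ automatically, handling this range for free. For the narrow window $9n\log n/10 \leq M < 50n\log n$, I would verify the property only at a constant-size geometric grid of checkpoints $M_t = 2^t \cdot 9n\log n/10$, $t=0,1,\ldots,O(1)$, and use monotonicity of sum-degrees in $M$ to interpolate: if at each $M_t$ every vertex satisfies $d^+ + d^- \geq 4M_t/10^3 n$, then for any $M \in [M_t, M_{t+1}]$,
$$d^+_{D_M}(v) + d^-_{D_M}(v) \geq d^+_{D_{M_t}}(v) + d^-_{D_{M_t}}(v) \geq \frac{4M_t}{10^3 n} = \frac{2M_{t+1}}{10^3 n} \geq \frac{2M}{10^3 n}.$$

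At a given checkpoint $M_t$, letting $p_t = M_t/(n(n-1))$, the sum-degree of a fixed vertex $v$ in $D(n,p_t)$ is distributed as $\mathrm{Bin}(2(n-1), p_t)$ with mean $\mu_t = 2M_t/n \geq 9\log n/5$. The target $4M_t/10^3 n$ equals $\mu_t/500$, so I would deploy the sharp multiplicative Chernoff estimate
$$P(X \leq \mu/c) \leq \exp\bigl(-\mu(1-(1+\ln c)/c)\bigr),$$
obtained by optimising $P(X \leq a) \leq e^{ta}\mathbb{E}[e^{-tX}]$ over $t$. At $c = 500$ this gives per-vertex failure $\exp(-0.98 \mu_t) = O(n^{-1.76})$; union over the $n$ vertices gives $O(n^{-0.76})$ in $D(n,p_t)$; and transferring to $D_{n,M_t}$ via Proposition~\ref{convert} costs a factor $5\sqrt{M_t} = O(\sqrt{n\log n})$, producing failure $O(n^{-0.26}\sqrt{\log n}) = o(1)$ per checkpoint. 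A union bound over the $O(1)$ checkpoints keeps the total failure at $o(1)$.

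The only real subtlety is quantitative: the target $2M/10^3 n$ sits a factor of $1000$ below the mean $2M/n$, well outside the $\varepsilon \leq 3/2$ regime of Lemma~\ref{chernoff}, so I cannot use the $\varepsilon^2 \mu/3$ form directly once I also have to pay the $\sqrt{M}$ cost of Proposition~\ref{convert}; the sharp MGF-based bound above is what gives the exponent arbitrarily close to $-\mu$, which has just enough slack. The factor-of-$2$ buffer at the checkpoints is likewise essential: without it, monotonicity cannot bridge intermediate values of $M$ between consecutive grid points. Once those two points are taken care of, the rest is a routine Chernoff-plus-union-bound argument combined with the passage $D(n,p) \to D_{n,M}$ and a monotonicity sweep through the random digraph process.
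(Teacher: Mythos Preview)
Your proof is correct, but it works harder than necessary compared to the paper's argument. The paper also splits off $M\geq 50n\log n$ via Lemma~\ref{min-degree}, but for the window $9n\log n/10\leq M\leq 50n\log n$ it observes that the target $2M/10^3n$ is at most $\log n/10$ \emph{uniformly} over this range. Hence it suffices to establish the single absolute threshold $d^+(v)+d^-(v)\geq \log n/10$ at the one checkpoint $M_0=9n\log n/10$; monotonicity of sum-degrees then carries this across the whole window with no geometric grid and no factor-of-two buffer. Your multi-checkpoint scheme is a correct workaround for not having noticed this uniformity, but it is avoidable.

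There is also a difference in how the transfer from $D(n,p)$ to $D_{n,M_0}$ is made. You invoke Proposition~\ref{convert} and pay the $5\sqrt{M}$ factor, which forces you to squeeze the Chernoff exponent close to $-\mu$ and leaves only $n^{-0.26}$ to spare. The paper instead uses that the property ``every vertex has sum-degree at least $\log n/10$'' is increasing and that $D(n,p)$ with $p=7\log n/8n$ almost surely has at most $M_0$ edges; a simple conditioning then gives the property in $D_{n,M_0}$ directly, with no $\sqrt{M}$ penalty. This is why the paper can afford a cruder tail estimate (summing $\binom{2n-2}{i}p^i(1-p)^{2n-2-i}$ by hand) rather than the optimised MGF bound you need.
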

\begin{proof} Note that in almost every random digraph process this property holds for $M\geq 50n\log n$ by Lemma~\ref{min-degree}, so we need only show this almost surely holds for every $9n\log n/10\leq M\leq 50n\log n$.

Let $p=7\log n/8n$, $D=D(n,p)$ and $d=\log n/10$. Note that $20\leq e^3$. For each $v\in V(D)$, the probability that $d^+_{D}(v)+d^-_{D}(v)\leq d$ is at most
\begin{align*}
\sum_{i=0}^d\binom{2n-2}{i}p^i(1-p)^{2n-2-i}&\leq \sum_{i=0}^d\left(\frac{2enp}{i}\right)^ie^{-p(2n-2-i)}
\leq e^{-pn(2-o(1))}\cdot\sum_{i=0}^d\left(\frac{2e\log n}{i}\right)^i \\
&\leq (d+1)\cdot e^{-pn(2-o(1))} \cdot \left(\frac{2e\log n}{d}\right)^d\leq 2d\cdot e^{-(7/4-o(1))\log n}\cdot \left(20e\right)^{\log n/10}
\\
&\leq 2d\cdot e^{-(7/4-2/5-o(1))\log n}=o(n^{-1}).
\end{align*}
Thus, by a union bound, almost surely, for each $v\in V(D)$, $d^+_{D}(v)+d^-_{D}(v)\geq d$.

An easy application of Lemma~\ref{chernoff} demonstrates that $D$ almost surely has at most $9n\log n/10$ edges. Furthermore, the property -- $\mathcal{P}$ say -- that, for each $v\in V(D)$, $d^+_{D}(v)+d^-_{D}(v)\geq d$, is an increasing property.
Thus, we have, with $M_0=9n\log n/10$,
\begin{align*}
1-o(1)=\P(D(n,p)\in \mathcal{P})\leq \P(e(D(n,p))> M_0)+\P(D_{n,M_0}\in \mathcal{P})=o(1)+\P(D_{n,M_0}\in \mathcal{P}).
\end{align*}
Hence, we have $\P(D_{n,M_0}\in \mathcal{P})=1-o(1)$.
Therefore, almost surely, if $M_0=9n\log n/10\leq M\leq 50n\log n$, then, for every $v\in V(D_M)$, $d^+_{D_M}(v)+d^-_{D_M}(v)\geq d=\log n/10\geq 2M/10^3n$.
\end{proof}

We will collect the vertices of low in- or out-degree in the random digraph into a set $S$. We use the following definition to record that there will typically be no vertices in $S$ which are close together (in a graph-theoretic sense).

\begin{defn} For a vertex set $S$ in a digraph $D$, an \emph{$S$-path} is a path with length at most 4 in $D$ (with any orientation on the edges) starting and ending in $S$. An \emph{$S$-cycle} is a cycle with length at most 4 in $D$ (with any orientations on the edges, and a cycle with length 2 permitted if it consists of two distinct edges) which contains a vertex in $S$.
\end{defn}

We wish to show that, in almost every $n$-vertex random digraph process, each digraph $D_M$ with $9n\log n/10\leq M\leq 50\log n$ has no $S$-paths or $S$-cycles, when $S$ is the set of vertices with low in- or out-degree, and $S$ is a small set (see Lemma~\ref{finalS}). To do this, we cannot show this is likely for each such digraph and then take a union bound, as the property is not sufficiently likely. Instead, we start by showing that this property is likely in a certain random digraph $D(n,p)$.

\begin{lemma}\label{noApaths} If $p=7\log n/8n$, then, almost surely, the following holds for $D=D(n,p)$ with $S=\{v\in V(D):d^+(v)< \log n/20\text{ or }d^-(v)< \log n/20\}$. There are no $S$-paths or $S$-cycles and $|S|\leq n^{1/3}$.
\end{lemma}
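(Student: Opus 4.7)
The plan is to bound three bad events separately --- (i) $|S| > n^{1/3}$, (ii) some $S$-path exists, (iii) some $S$-cycle exists --- and show each has probability $o(1)$ via the first moment method. The key ingredient will be a sharp upper bound on $\P(v \in S)$ for a fixed $v$. Since $d^+_D(v)$ and $d^-_D(v)$ are each $\mathrm{Bin}(n-1, p)$ with $p = 7\log n/(8n)$, I would evaluate the binomial tail at threshold $k = \log n/20$: the standard estimate $\binom{n-1}{i}p^i(1-p)^{n-1-i} \leq (enp/i)^i e^{-p(n-1-i)}$ at its largest term $i = k$ gives
\[
\P(d^+_D(v) < k) \leq n^{-\alpha + o(1)}, \quad \text{where } \alpha := \tfrac{7}{8} - \tfrac{1}{20}\log\!\left(\tfrac{35e}{2}\right),
\]
after checking that the consecutive-term ratio $\approx 17.5$ at $i=k$ makes the tail sum geometrically dominated by this largest term. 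A quick arithmetic check (using $\log(35e/2) \approx 3.86$) shows $\alpha \approx 0.682 > 2/3$; this single numerical inequality is what drives the whole argument, and is engineered into the choices of $p$ and $k$. Thus $\P(v \in S) \leq 2n^{-\alpha + o(1)}$.

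For (i), linearity gives $\E|S| \leq n \cdot n^{-\alpha + o(1)} = o(n^{1/3})$ (since $1 - \alpha < 1/3$), and Markov's inequality yields $\P(|S| > n^{1/3}) = o(1)$. For (iii), an $S$-cycle of length $\ell \in \{2, 3, 4\}$ is determined by a sequence of $\ell$ vertices ($\leq n^\ell$ choices), its $\ell$ edges must each be present in some orientation (probability $\leq (2p)^\ell$), and at least one of its vertices must lie in $S$; union-bounding over the choice of that vertex gives expected count at most $n^\ell (2p)^\ell \cdot \ell \cdot n^{-\alpha + o(1)} = O(\log^\ell n) \cdot n^{-\alpha + o(1)} = o(1)$.

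Case (ii) is the most delicate and will be the main obstacle. For an $S$-path of length $\ell \in \{1,2,3,4\}$ on $x_0, x_1, \ldots, x_\ell$ (at most $n^{\ell + 1}$ vertex sequences, with all path edges present in some orientation with probability $\leq (2p)^\ell$), I need $x_0, x_\ell \in S$ simultaneously. The events $x_0 \in S$ and $x_\ell \in S$ are \emph{almost} independent: after excluding the path edges themselves, the potential edges at $x_0$ and those at $x_\ell$ form disjoint sets except for the at most two directed edges between $x_0$ and $x_\ell$ (an issue only when $\ell \geq 2$). My fix is to condition on the status of those two edges, splitting into four cases; in each case the remaining degree contributions become genuinely independent $\mathrm{Bin}(n - O(1), p)$ variables, and the same tail bound still gives each of the two degree conditions probability $\leq n^{-\alpha + o(1)}$. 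Hence $\P(x_0, x_\ell \in S \mid \text{path edges exist}) \leq n^{-2\alpha + o(1)}$, and the expected number of $S$-paths of length $\ell$ is at most $n^{\ell+1}(2p)^\ell \cdot n^{-2\alpha + o(1)} = O(\log^\ell n) \cdot n^{1 - 2\alpha + o(1)} = o(1)$, using $2\alpha > 4/3 > 1$. Markov then disposes of (ii), and together (i)--(iii) give the lemma. Beyond the coupling trick in (ii) and the single arithmetic check $\alpha > 2/3$, the argument is routine first-moment computation.
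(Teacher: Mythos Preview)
Your proposal is correct and follows essentially the same first-moment strategy as the paper: bound $\E|S|$, the expected number of $S$-paths, and the expected number of $S$-cycles separately, each by $o(1)$ (or $o(n^{1/3})$) using the binomial tail estimate $\P(d^{\pm}(v)<\log n/20)\le n^{-\alpha+o(1)}$ with $\alpha\approx 0.682>2/3$.

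The one genuine point of difference is how the dependence between the events $x_0\in S$ and $x_\ell\in S$ is handled in part~(ii). You condition on the (at most two) directed edges between $x_0$ and $x_\ell$, after which the remaining degree contributions are independent $\mathrm{Bin}(n-O(1),p)$ variables; this cleanly gives the product bound $n^{-2\alpha+o(1)}$. The paper instead uses the containment $\{d^{j_0}(x_0)<d\}\cap\{d^{j_\ell}(x_\ell)<d\}\subset\{d^{j_0}(x_0)+d^{j_\ell}(x_\ell)<2d\}$ and bounds the probability of the latter by treating the sum as (dominated by) a single binomial with roughly $2n$ trials, summing $\binom{2n}{i}p^i(1-p)^{2n-O(1)-i}$ up to $i=2d$. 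Both devices are standard and yield the same $n^{-2\alpha+o(1)}$ bound; your conditioning argument is perhaps more transparent about where the independence comes from, while the paper's sum trick avoids any case-splitting.
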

\begin{proof}
Let $d=\log n/20$. Note that $20\leq e^3$.
We have
\begin{align*}
\E|S|&\leq n \cdot \sum_{i=0}^{d}2\binom{n-1}{i}p^i(1-p)^{n-1-i}\leq 2n(d+1)\cdot\left(\frac{enp}{d}\right)^{d}\cdot e^{-(1-o(1))np}\\
&\leq n\log n\cdot\left(20e\right)^{d}\cdot e^{-(7/8-o(1))\log n}
\leq \log n\cdot e^{4d}\cdot e^{(1/8+o(1))\log n}\\
&=\log n\cdot e^{(1/5+1/8+o(1))\log n}=o(n^{1/3}).
\end{align*}
Thus, by Markov's inequality, we almost surely have that $|S|\leq n^{1/3}$.

Let $X$ be the number of $S$-paths in $D(n,p)$. Then,
\begin{align*}
\E X&\leq \binom{n}{2}\sum_{k=0}^3(2p)^{k+1}n^{k}\cdot\sum_{i=0}^{2d}4\binom{2n}{i}p^i(1-p)^{2n-5-i}
\leq n(2np)^4\cdot(2d+1)\cdot 4\left(\frac{2enp}{2d}\right)^{2d}e^{-(2-o(1))np}\\
&=O\left( n\log^5n \cdot (20e)^{2d}e^{-(2-o(1))np}\right)
=O(n\log^5n\cdot \exp(8d-(7/4-o(1))\log n))\\
&=O(n\log^5n\cdot \exp(5\log n/4))=o(1).
\end{align*}
Thus, almost surely, there are no $S$-paths in $D$.

Let $Y$ be the number of $S$-cycles in $D(n,p)$. Then, similarly,
\begin{align*}
\E Y&\leq n\cdot\sum_{k=1}^3(2p)^{k+1}n^k\cdot\sum_{i=0}^{d}2\binom{n}{i}p^i(1-p)^{n-4-i}
=O\left((np)^4\cdot(d+1)\cdot\left(\frac{enp}{d}\right)^{d}\cdot e^{-(1-o(1))np}\right)
\\
&=O\left( \log^5n\cdot\left(20e\right)^{d}\cdot e^{-(7/8-o(1))\log n}\right)
=O\left( \log^5n\cdot \exp(4d-(7/8-o(1))\log n)\right)=o(1).
\end{align*}
Thus, almost surely, there are no $S$-cycles in $D$.
\end{proof}

Lemma~\ref{noApaths} shows only that a property exists with probability $1-o(1)$, so we cannot use Proposition~\ref{convert} as we did before. However, we can easily show that the property holds for some useful digraph in the random digraph process, as follows.

\begin{corollary}\label{M0exists} There exists some $M_0=M_0(n)$ with $n\log n/2\leq M_0\leq 9n\log n/10$ such that the following almost surely holds for $D=D_{n,M_0}$ with $S=\{v\in V(D):d^+(v) < \log n/20\text{ or }d^-(v)> \log n/20\}$. There are no $S$-paths or $S$-cycles and $|S|\leq n^{1/3}$.
\end{corollary}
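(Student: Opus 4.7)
The plan is to convert Lemma~\ref{noApaths} from the binomial model $D(n,p)$ into the uniform-edge model $D_{n,M_0}$ by a standard averaging (pigeonhole) argument, rather than by Proposition~\ref{convert}, which requires a stronger $1 - o(n^{-1/2})$ tail that Lemma~\ref{noApaths} does not supply.

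Set $p := 7\log n/(8n)$, matching Lemma~\ref{noApaths}, so that $\E|E(D(n,p))| = pn(n-1) = (7/8)(n-1)\log n$. This mean lies strictly inside $I := [n\log n/2,\, 9n\log n/10]$ with constant-factor slack on both sides (as $7/8 \in (1/2, 9/10)$). Since $|E(D(n,p))|$ is binomial with mean $\Theta(n\log n)$, Lemma~\ref{chernoff} gives $\P(|E(D(n,p))| \in I) = 1 - o(1)$.

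Let $\mathcal{P}$ denote the property ``there are no $S$-paths or $S$-cycles and $|S| \le n^{1/3}$,'' and for each integer $M$ write $q_M := \P(D_{n,M} \in \mathcal{P})$. Conditioning $D(n,p)$ on its number of edges,
\[
\P(D(n,p) \in \mathcal{P}) \;=\; \sum_{M} \P(|E(D(n,p))|=M)\, q_M.
\]
By Lemma~\ref{noApaths} the left-hand side is $1 - o(1)$, while the contribution from $M \notin I$ is at most $\P(|E(D(n,p))| \notin I) = o(1)$. Hence
\[
\sum_{M \in I} \P(|E(D(n,p))| = M)\, q_M \;\geq\; 1 - o(1).
\]
Since the weights $\P(|E(D(n,p))|=M)$ sum to at most $1$, this forces $\max_{M \in I} q_M \geq 1 - o(1)$. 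Choosing $M_0 = M_0(n) \in I$ to attain this maximum gives $q_{M_0} = 1 - o(1)$, i.e.\ $D_{n,M_0}$ almost surely satisfies $\mathcal{P}$, as required.

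The argument is essentially routine once Lemma~\ref{noApaths} is in hand; the only quantitative check is that the mean $(7/8)(n-1)\log n$ sits comfortably inside $I$ so Chernoff concentrates $|E(D(n,p))|$ there. There is no obstacle from the fact that $\mathcal{P}$ is not monotone (adding edges can both remove vertices from $S$ and create new short paths), because the averaging argument never uses monotonicity — it only uses the tower-of-probabilities relation between $D(n,p)$ and the $D_{n,M}$'s.
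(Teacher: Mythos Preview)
Your proof is correct and is essentially identical to the paper's own argument: both set $p=7\log n/(8n)$, use Chernoff to show $|E(D(n,p))|$ lies in $[n\log n/2,\,9n\log n/10]$ almost surely, condition on the edge count to write $\P(D(n,p)\in\mathcal{P})$ as a weighted average of the $q_M$, and then pigeonhole to obtain $\sup_{M\in I} q_M = 1-o(1)$. Your remark that monotonicity is not needed is apt, and the paper likewise makes no use of it.
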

\begin{proof} Let $p=7\log n/8n$, $N_0=n\log n/2$ and $N_1=9n\log n/10$. By a simple application of Lemma~\ref{chernoff}, we have that, almost surely, $N_0\leq e(D(n,p))\leq N_1$. Thus, if $\mathcal{P}$ is the property of digraphs satisfying the condition in the corollary, then
\begin{align*}
\P(D(n,p)\in \mathcal{P})&=\sum_{M=0}^{n(n-1)}\P(D_{n,M}\in \mathcal{P})\cdot \P(e(D(n,p)=M)\\
&\leq \P(e(D(n,p))\notin (N_0,\ldots,N_1))+\sum_{M=N_0}^{N_1}\P(D_{n,M}\in \mathcal{P})\cdot \P(e(D(n,p))=M)\\
&\leq o(1)+\left(\sup_{N_0\leq M\leq N_1}\P(D_{n,M}\in \mathcal{P})\right)\cdot\sum_{M=N_0}^{N_1} \P(e(D(n,p))=M)\\
&\leq o(1)+\sup_{N_0\leq M\leq N_1}\P(D_{n,M}\in \mathcal{P}).
\end{align*}
Thus, as $\P(D(n,p)\in \mathcal{P})=1-o(1)$, we must have $\sup_{N_0\leq M\leq N_1}\P(D_{n,M}\in \mathcal{P})=1-o(1)$. Choosing $M_0$ to maximise $\P(D_{n,M_0}\in \mathcal{P})$ subject to $N_0\leq M_0\leq N_1$ thus gives the result.
\end{proof}

Next, by starting with the digraph $D_{n,M_0}$ from Corollary~\ref{M0exists}, we show that if $O(n\log n)$ random edges are added then it is likely that no short paths between the vertices with small in- or out-degree are created. This will give us the following lemma.

\begin{lemma}\label{finalS} Almost surely, in the $n$-vertex random digraph process $D_0,D_1,\ldots,D_{n(n-1)}$, the following holds for each $M$ with $9n\log n/10\leq M\leq 50n\log n$. Letting $S_M=\{v\in V(D):d^+_{D_M}(v)< \log n/20\text{ or }d^-_{D_M}(v)<\log n/20\}$, there are no $S_M$-paths or $S_M$-cycles, and $|S_M|\leq n^{1/3}$.
\end{lemma}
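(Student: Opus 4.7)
The plan is to combine Corollary~\ref{M0exists} with a monotonicity reduction and a first-moment computation in a single digraph. Let $M_0\in[n\log n/2,\,9n\log n/10]$ be supplied by Corollary~\ref{M0exists}, and write $\mathcal{G}_0$ for the event that $D_{M_0}$ satisfies $|S_{M_0}|\le n^{1/3}$, contains no $S_{M_0}$-path or $S_{M_0}$-cycle, and (via Lemma~\ref{maxdegree}) has $\Delta^\pm(D_{M_0})\le 100\log n$, so that $\P(\mathcal{G}_0)=1-o(1)$. Since in- and out-degrees are non-decreasing along the random digraph process, $S_M\subseteq S_{M_0}$ for every $M\ge M_0$; in particular the size bound $|S_M|\le n^{1/3}$ is immediate for every $M$ in the required range $[9n\log n/10,\,50n\log n]$. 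Setting $M_1:=50n\log n$, any $S_M$-path (respectively $S_M$-cycle) in such a $D_M$ is also a path of length at most $4$ in $D_{M_1}$ with both endpoints in $S_{M_0}$ (respectively a cycle of length at most $4$ through $S_{M_0}$), so it suffices to show that, conditional on $\mathcal{G}_0$, $D_{M_1}$ almost surely has no such path or cycle.

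Fix any $D_{M_0}$ realising $\mathcal{G}_0$. Given $D_{M_0}$, the extra edges $E(D_{M_1})\setminus E(D_{M_0})$ form a uniformly random subset of size $M_1-M_0\le 50n\log n$ of the $\Theta(n^2)$ non-edges of $D_{M_0}$, so each specific non-edge lies in $D_{M_1}$ with probability at most $p':=200\log n/n$, and any fixed set of $k$ non-edges jointly with probability at most $O((p')^k)$. For every candidate short path $P$ (or short cycle) with endpoints in $S_{M_0}$, the event $\mathcal{G}_0$ forces at least one edge of $P$ to be a non-edge of $D_{M_0}$, so letting $K(P)\ge 1$ denote that number of edges, $\P(P\subseteq D_{M_1}\mid D_{M_0})\le O((p')^{K(P)})$.

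The core of the proof is a first-moment estimate over $P$, organised by length $\ell\in\{1,2,3,4\}$ and by the old-versus-new pattern of its edges. A careful count --- decomposing $P$ into its maximal old-edge sub-paths, bounding each such sub-path using $\Delta^\pm(D_{M_0})\le 100\log n$ from one fixed vertex, and absorbing each endpoint-in-$S_{M_0}$ constraint into an adjacent old-edge constraint (when one is present) via the double-counting bound $\sum_{u\in S_{M_0}} d^\pm(u)\le |S_{M_0}|\cdot 100\log n$ --- yields at most $|S_{M_0}|^2\cdot(100\log n)^{\ell-K}\cdot n^{K-1}$ candidate structures per pattern with $K\ge 1$ new edges. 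Multiplying by $O((p')^K)$ produces the uniform bound $O(|S_{M_0}|^2(\log n)^\ell/n) = O(n^{-1/3}\log^\ell n)=o(1)$ per pattern, and cycles (with one endpoint already fixed in $S_{M_0}$) yield strictly smaller contributions. Summing the $O(1)$-many patterns and applying Markov's inequality gives $\P(\text{bad in }D_{M_1}\mid \mathcal{G}_0)=o(1)$, which combined with $\P(\mathcal{G}_0)=1-o(1)$ completes the proof.

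The principal obstacle is the bookkeeping in the first-moment count. A naive per-vertex bound that charges $|S_{M_0}|$ for each endpoint-in-$S_{M_0}$ constraint \emph{and} $100\log n$ for each old-edge constraint produces a bound that is polynomially too large (e.g.\ of order $n^{2/3}(\log n)^{\ell-1}$, which diverges for $\ell=4$) in patterns where old edges are incident to the endpoints. The fix is to merge adjacent constraints via $\sum_{u\in S_{M_0}} d^\pm(u)\le |S_{M_0}|\cdot 100\log n$, which saves a factor of $n/|S_{M_0}|$ at each endpoint that lies on an old-edge sub-path and is what allows every pattern to contribute only $O(n^{-1/3}\log^\ell n)$.
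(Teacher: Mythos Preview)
Your argument is correct and reaches the same conclusion as the paper, but by a genuinely different route after the common opening. Both proofs begin by invoking Corollary~\ref{M0exists} to fix $M_0$, use monotonicity of degrees to get $S_M\subseteq S_{M_0}$ for all $M\ge M_0$, and then reduce to showing that $D_{M_1}$ (with $M_1=50n\log n$) contains no $S_{M_0}$-path or $S_{M_0}$-cycle. From that point the arguments diverge. The paper reveals the extra edges one at a time: for each step $M\in[M_0,M_1)$ it lets $S'_M$ be the set of vertices within a bounded graph distance of $S_{M_0}$ in $D_M$, bounds $|S'_M|$ using only $|S_{M_0}|\le n^{1/3}$ and the maximum-degree bound of Lemma~\ref{maxdegree}, observes that a new $S_{M_0}$-path or cycle can appear only if the freshly added edge lies inside $S'_M$, and finishes with a union bound over the $O(n\log n)$ steps. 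Your argument instead conditions on $D_{M_0}$ and performs a single first-moment computation in $D_{M_1}$: since any bad path must contain at least one new edge, you enumerate old/new edge patterns and count candidate structures by walking inward from the two $S_{M_0}$-endpoints until the two walks meet at a designated new edge, which is exactly what produces your bound $|S_{M_0}|^2(100\log n)^{\ell-K}n^{K-1}$ and hence the per-pattern contribution $O(n^{-1/3}\log^\ell n)$. The paper's stepwise method needs less combinatorial bookkeeping (no pattern enumeration, no ``meet at a new edge'' device), while your method only requires the maximum-degree bound at the single time $M_0$ rather than uniformly over the window, and treats $D_{M_1}$ in one shot rather than tracking the evolving neighbourhood $S'_M$. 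Both are first-moment arguments of comparable strength; they simply organise the same saving differently.
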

\begin{proof}
Let $M_0$ be from Corollary~\ref{M0exists}. Let $N_0=9n\log n/10$ and $N_1=50n\log n$, and note that $M_0\leq N_0$. Reveal the edges of $D_{M_0}$ and let $S=S_{M_0}$,
 so that, almost surely, $|S|\leq n^{1/3}$ and there are no $S$-paths or $S$-cycles. Note that $S_M\subset S$ for each $M_0\leq M\leq N_1$. Thus, if we can show that, almost surely, $D_{N_1}$ has no $S$-paths or $S$-cycles then we are done.

For each $M$, $M_0\leq M< N_1$, let $e_M$ be the edge added to $D_M$ to get $D_{M+1}$. Let $S'_M$ be the set of vertices within a graph distance 2 of any $u\in S_M$ in the underlying undirected graph of $D_M$. Let $E_M$ be the event that $e_M$ is contained within $S'_M$. Note that if no such event $E_M$, $M_0\leq M<N_1$, occurs, then $D_{N_1}$ has no $S$-paths or $S$-cycles.

Note that, for each $M$, $M_0\leq M< N_1$, $|S'_M|\leq 2(\Delta^+(D_M)+\Delta^-(D_M))^2|S|$, and thus
\[
\P(E_M|(\Delta^\pm(D_M)\leq 10^4\log n)\land( |S|\leq n^{1/3}))
=O\left(\frac{\log^4n\cdot(n^{1/3})^2}{n(n-1)-M}\right)=O\left(\frac{n^{2/3}\log^4n}{n^2}\right)=o\left(\frac{1}{n\log n}\right).
\]
By Lemma~\ref{maxdegree}, for each $M_0\leq M< N_1$, $\P(\Delta^\pm(D_M)\leq 10^4\log n)=1-o(n^{-2})$.
Thus, the probability that no event $E_M$, $M_0\leq M< N_1$, occurs is at most
\[
\P(|S|>n^{1/3})+\sum_{M=M_0}^{N_1-1}\left(o(n^{-2})+\P(E_M|(\Delta^\pm(D_M)\leq 10^4\log n)\land (|S|\leq n^{1/3})\right)=o(1).\qedhere
\]
\end{proof}

\subsection{Resilience in the random digraph process}\label{sec75}

For convenience, we collect together the properties of the random digraph process that we have shown into the following corollary.
\begin{corollary}\label{collect} Let $\eps>0$. In almost every $n$-vertex random digraph process $D_0,D_1,\ldots,D_{n(n-1)}$, each digraph $D_M$ with $\delta^\pm(D_M)\geq 1$ satisfies $M\geq n(\log n-\lg{2})$ and the following with
\[
d_M=\frac{M}{2\cdot 10^3n\log n},\;\;\;\; m_M=\frac{n\lg{3}}{d_M\log n},\;\;\;\; and \;\;\;\; S_M=\{v:d^+_{D_M}(v)< 2d_M\log n\text{ or }d^-_{D_M}(v)< 2d_M\log n\}.
\]
\stepcounter{alabel}
\begin{enumerate}[label = \textbf{\Alph{alabel}\arabic{enumi}}]
\item\label{pseudr4} $\Delta^\pm(D_M)\leq 100 M/n\leq 10^6d_M\log n/2$.
\item\label{pseudr5} For each $v\in V(D_M)$, $d^+_{D_M}(v)+d^-_{D_M}(v)\geq 2M/10^3n=4d_M\log n$.
\item\label{pseudr0} There are no $S_M$-paths or $S_M$-cycles in $D_M$ and $|S_M|\leq \sqrt{n}$.
\item\label{pseudr3} If sets $A,B\subset V(D_M)$ satisfy $|A|\geq m_M/2$ and $|B|\geq n/4$, then, for each $j\in \{+,-\}$, letting $p_M=M/n(n-1)$, $(1-\e/100)p_M|A||B|\leq e^j_{D_M}(A,B)\leq (1+\e/100)p_M|A||B|$.
\item\label{pseudr1} For any disjoint sets $A,B\subset V(D_M)$ and $j\in \{+,-\}$ with $|A|\leq 4m_M$ and, for each $v\in A$, $d^j_{D_M}(v,B)\geq d_M\lg{2}/4\lg{4}$, we have $|B|\geq 10|A|$.
\item\label{pseudr2} For any disjoint sets $A,B\subset V(D_M)$ with $|A|\leq 4m_M$ and $j\in \{+,-\}$, for each $v\in A$, $d^j_{D_M}(v,B)\geq d_M(\log n)^{2/3}/4$, we have $|B|\geq |A|(\log n)^{1/3}$.
\end{enumerate}
\end{corollary}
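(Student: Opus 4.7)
The plan is to check each of the six listed properties by invoking a corresponding earlier result once the parameters are matched. First, by Lemma~\ref{when0}, almost surely every $D_M$ with $\delta^\pm(D_M)\geq 1$ has $M\geq n(\log n-\lg{2})$, so I may restrict attention to this range. Throughout I will use the identity $d_M\log n = M/(2\cdot 10^3 n)$ to translate between statements phrased in terms of $M$ and those phrased in terms of $d_M$.

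The first three properties transfer almost verbatim from results already formulated for the random digraph process. For \ref{pseudr4} I combine Lemma~\ref{maxdegree} (for $n\log n/2\leq M\leq 50n\log n$) with Lemma~\ref{min-degree} (for larger $M$); the numerical inequality $100M/n\leq 10^6 d_M\log n/2$ is immediate. For \ref{pseudr5} I combine Proposition~\ref{summindeg} with Lemma~\ref{min-degree}, using $2M/(10^3 n) = 4d_M\log n$. For \ref{pseudr0} I invoke Lemma~\ref{finalS}: the set $S_M$ of the corollary is contained in the set $S$ of Lemma~\ref{finalS}, since $2d_M\log n = M/(10^3n)\leq \log n/20$ throughout $M\leq 50n\log n$, so any $S_M$-path or $S_M$-cycle is also an $S$-path or $S$-cycle; for $M>50n\log n$, Lemma~\ref{min-degree} yields $\delta^\pm(D_M)\geq M/2n>2d_M\log n$, so $S_M=\emptyset$ and the statement is vacuous.

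The remaining properties are phrased for $D(n,p)$, so I plan to apply Proposition~\ref{msets} and Lemma~\ref{mindegexp} in $D(n,p_M)$ with $p_M=M/n(n-1)$ and then transfer each to $D_{n,M}$ via Proposition~\ref{convert}. For \ref{pseudr3} I use Proposition~\ref{msets}, after noting $m_M/2\geq \lg{4}/p_M$ because $10^3\lg{3}\geq \lg{4}$ for large $n$. For \ref{pseudr1} I apply Lemma~\ref{mindegexp} with $k=10$ and $f(n)=40\lg{4}$, so that the hypothesis $d^j(v,B)\geq d_M\lg{2}/(4\lg{4})$ for every $v\in A$ gives $e^j(A,B)\geq d_Mk|A|\lg{2}/f(n)$, whence $|B|\geq 10|A|$. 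For \ref{pseudr2} I use Lemma~\ref{mindegexp} analogously with $k=\lceil(\log n)^{1/3}\rceil$ and a slowly growing $f(n)=o(\lg{3})$ chosen so that $d_M k\lg{2}/f(n)\leq d_M(\log n)^{2/3}/4$ with room to spare. Each application fails in $D(n,p_M)$ with probability $o(n^{-3})$, so by Proposition~\ref{convert} it fails in $D_{n,M}$ with probability $o(n^{-3})\cdot O(\sqrt{M})=o(n^{-2})$, after which a union bound over $M$ completes the argument.

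The only obstacle is routine bookkeeping: checking that $1\leq k\leq n/(8m)$ and $f(n)=o(\lg{3})$ uniformly across the relevant range of $M$ for each invocation of Lemma~\ref{mindegexp} (easy, since $d_M=\Omega(1)$), and separately disposing of the tail $M\geq n(n-1)-\log n$, where $D_M$ is nearly complete and every property is trivial.
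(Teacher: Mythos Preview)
Your proposal is correct and follows essentially the same route as the paper's proof: each item is obtained by citing the corresponding lemma (Lemmas~\ref{when0}, \ref{min-degree}, \ref{maxdegree}, \ref{finalS}, Proposition~\ref{summindeg}, Proposition~\ref{msets}, Lemma~\ref{mindegexp}) with the parameter matching you describe, transferring the $D(n,p)$ statements to $D_{n,M}$ via Proposition~\ref{convert}, and handling the tail $M\geq n(n-1)-\log n$ by hand. The paper makes the choice $f(n)=4\lg{2}/(\log n)^{1/3}$ explicit for \ref{pseudr2}, but your existential formulation is equivalent.
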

\begin{proof} By Lemma~\ref{maxdegree}, we almost surely have that $\Delta^\pm(D_M)\leq 100M/n$, and thus \ref{pseudr4} holds, for each $M\geq n\log n/2$. By Lemma~\ref{summindeg}, we almost surely have that \ref{pseudr5} holds for all $M\geq 9n\log n/10$.
Almost surely, by Lemma~\ref{min-degree}, for each $M\geq 50n\log n$, $\delta^{\pm}(D_M)\geq M/2n\geq 10^3d_M\log n$, so that $S_M=\emptyset$. Note that, if $M\leq 50n\log n$, then $2d_M\log n\leq \log n/20$, and, thus combining this with Lemma~\ref{finalS}, we have that, almost surely, \ref{pseudr0} holds for each $M\geq 9n\log n/10$.
By Proposition~\ref{msets} and Proposition~\ref{convert}, we have that \ref{pseudr3} almost surely holds for each $9n\log n/10\leq M\leq n(n-1)-\log n$. Note that, when $M\geq n(n-1)-\log n$, there are at most $\log n$ missing edges, and so \ref{pseudr3} easily holds for sufficiently large $n$.

Almost surely, by Lemma~\ref{mindegexp} with $f(n)=40\lg{4}$ and taking the resulting property with $k=10$, and using Proposition~\ref{convert}, \ref{pseudr1} holds for each $9n\log n/10\leq M\leq n(n-1)-\log n$. Almost surely, by Lemma~\ref{mindegexp} with $f(n)=4\lg{2}/(\log n)^{1/3}$
and taking the resulting property with $k=(\log n)^{1/3}=o(n/m_M)$, and using Proposition~\ref{convert}, \ref{pseudr2} holds for each $9n\log n/10\leq M\leq n(n-1)-\log n$.
Note that, if $M\geq n(n-1) -\log n$, then $d_M\lg{2}/4\lg{4}\geq 40m_M$ and $d_M(\log n)^{2/3}/4\geq 4m_M(\log n)^{1/3}$, so that \ref{pseudr1} and \ref{pseudr2} hold.

Therefore, \ref{pseudr4}--\ref{pseudr2} almost surely hold for each $M\geq 9n\log n/10$. By Lemma~\ref{when0}, almost surely, if  $M\leq  n(\log n-\lg{2})$, then $\delta^+(D_M)=0$ or $\delta^-(D_M)=0$. Thus, almost surely, if $\delta^\pm(D_M)\geq 1$,
then $M\geq n(\log n-\lg{2})$ and  \ref{pseudr4}--\ref{pseudr2} hold.
\end{proof}

We now have the tools we need to prove the resilience in Theorem~\ref{directedresilience}.

\begin{proof}[Proof of the resilience in Theorem~\ref{directedresilience}]
Almost surely, by Corollary~\ref{collect}, every digraph $D_M$ in the random digraph process with $\delta^\pm(D_M)\geq 1$ satisfies $M\geq n(\log n-\lg{2})$  and \ref{pseudr4}--\ref{pseudr2} with
\[
d_M=\frac{M}{2\cdot 10^3n\log n},\;\;\;\; m_M=\frac{n\lg{3}}{d_M\log n}\;\;\;\; and \;\;\;\; S_M=\{v:d^+_{D_M}(v)< 2d_M\log n\text{ or }d^-_{D_M}(v)< 2d_M\log n\}.
\]
We will show, for sufficiently large $n$, that each such $D_M$ is $(1/2-\eps)$-resiliently Hamiltonian.

Fix $M\geq n(\log n-\lg{2})$ such that $\delta^\pm(D_M)\geq 1$, then, and suppose that $H\subset D_M$ with $d^j_H(v)\leq (1/2-\eps)d^j_{D_M}(v)$ for each $v\in V(D_M)$ and $j\in \{+,-\}$.
Pick for each $v\in S_M$ an in-neighbour $x_v$ and an out-neighbour $y_v$ in $D_M-H$, noting that this is possible as $d^j_{D_M-H}(v)>d^j_{D_M}(v)/2>0$ for each $j\in \{+,-\}$. Note that, as there are no $S_M$-paths or $S_M$-cycles in $D_M$ by \ref{pseudr0}, the vertices $v$, $x_v$ and $y_v$, $v\in S_M$, are distinct.
Form $D$ from $D_M-H$ by deleting the vertices in $S_M$ and, for each $v\in S_M$, merging $x_v$ into $y_v$ to get the new vertex $z_v$.

\begin{claim}\label{crux}
For sufficiently large $n$, $D$ is an $(\eps/100,d_M)$-pseudorandom digraph.
\end{claim}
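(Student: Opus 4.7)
The plan is to verify each of the four conditions \ref{pseud0}--\ref{pseud3} of Definition~\ref{pseuddefn} for $D$ with $(d,\eps)=(d_M,\eps/100)$, by translating each hypothesis about $D$ into a statement about $D_M$ through the quotient map $\pi\colon V(D_M)\setminus S_M\to V(D)$ that identifies $x_w$ and $y_w$ to $z_w$ for each $w\in S_M$, and then invoking the matching item of Corollary~\ref{collect}. Since $|S_M|\leq\sqrt n$, we have $|V(D)|=n-2|S_M|$ and the pseudorandomness parameter $m=|V(D)|\lg{3}/(d_M\log n)$ agrees with $m_M$ up to a factor $1+o(1)$, so the size hypotheses of Corollary~\ref{collect} will always be met with room to spare.

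For~\ref{pseud0}: an unmerged vertex $v\in V(D)$ lies outside $S_M$, so $d^\pm_{D_M}(v)\geq 2d_M\log n$. The removal of $H$ deletes at most the $(1/2-\eps)$-fraction permitted by the resilience hypothesis, and the subsequent deletion of $S_M$ and the mergings can perturb each neighbourhood by at most $|S_M|=O(\sqrt n)=o(d_M\log n)$ vertices (using absence of $S_M$-cycles), so $d^\pm_D(v)\geq d_M\log n$. For a merged vertex $z_w$ the same bound applies to $x_w$ on the in-side and $y_w$ on the out-side, both of which lie outside $S_M$ because $D_M$ has no $S_M$-paths. The maximum-degree bound is inherited directly from~\ref{pseudr4}.

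For~\ref{pseud1} and~\ref{pseud2}: given disjoint $A,B\subset V(D)$ satisfying the respective hypothesis for $j\in\{+,-\}$, I define an injective lift $\mathrm{rep}\colon A\to V(D_M)\setminus S_M$ sending each unmerged vertex to itself, and sending $z_w$ to $y_w$ when $j=+$ and to $x_w$ when $j=-$. Any edge of $D$ from $v\in A$ to $b\in B$ arises from an edge of $D_M-H$ leaving $\mathrm{rep}(v)$ and landing in $\pi^{-1}(b)$, so the degree hypothesis transfers to $D_M$ as $d^j_{D_M}(\mathrm{rep}(v),\pi^{-1}(B))\geq d^j_D(v,B)$. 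Because $|\mathrm{rep}(A)|=|A|\leq 4m_M$, the appropriate item of Corollary~\ref{collect} then gives the required expansion of $\pi^{-1}(B)$, which converts to an expansion of $B$ up to an additive $|S_M|=O(\sqrt n)$; this loss is negligible against the $10|A|$ or $(\log n)^{1/3}|A|$ expansion factor when $|A|$ is not trivially small, and in the remaining regime the pointwise bound $|B|\geq d^j_D(v,B)\geq d_M\lg{2}/\lg{4}$ is already enormous.

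For~\ref{pseud3}, the step I expect to be the hardest, I argue by contradiction. Suppose $A\subset V(D)$ has $|A|=m$ and $|N^+_D(A)|<(1/2+\eps/100)|V(D)|$, and set $A'=\pi^{-1}(A)$ and $B'=\pi^{-1}(V(D)\setminus(A\cup N^+_D(A)))\cup S_M$, so that $|A'|\geq m\geq m_M/2$ and $|B'|>(1/2-\eps/50)n\geq n/3$. Every edge of $D_M$ from $A'$ to $B'$ must either end in $S_M$ (contributing at most $|S_M|\Delta^-(D_M)=o(p_M|A'||B'|)$ edges) or lie in $H$, so combining \ref{pseudr3} (to bound $e^+_{D_M}(A',B')$ from below and $\sum_{v\in A'}d^+_{D_M}(v)$ from above) with the resilience bound $e^+_H(A',B')\leq(1/2-\eps)\sum_{v\in A'}d^+_{D_M}(v)$ yields
\[
(1-\eps/100)\,p_M|A'||B'|\;\leq\;(1/2-\eps)(1+\eps/100)\,p_M|A'|\,n+o(p_M|A'|n),
\]
forcing $|B'|<(1/2-\eps/2)n$ and contradicting the lower bound on $|B'|$. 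The $N^-$ case is symmetric. This final verification is the bottleneck: it essentially consumes the entire gap between the $(1/2-\eps)$ edge-removal budget allowed for $H$ and the $(1/2+\eps/100)$ expansion demanded of $D$, so the $O(\sqrt n)$ distortion from $S_M$ has to be absorbed within the slack between $\eps$ and $\eps/100$.
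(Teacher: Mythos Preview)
Your approach is the same as the paper's, but there is a genuine gap in your verification of \ref{pseud1} and \ref{pseud2}. Lifting $B$ to the full preimage $\pi^{-1}(B)$ costs you an additive $|S_M|$ that your two-case analysis does not recover: the pseudorandom definition demands $|B|\geq 10|A|$ exactly, not $|B|\geq 10|A|-O(\sqrt n)$, and for $|A|$ in the range $d_M\lg{2}/(10\lg{4})<|A|\leq 2m$ (recall $d_M=\Theta(1)$ whenever $S_M\neq\emptyset$) neither your pointwise bound $|B|\geq d_M\lg{2}/\lg{4}$ nor the approximate bound $|B|\geq 10|A|-\sqrt n$ gives this. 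The paper avoids the loss altogether by lifting $B$ with the \emph{opposite} single representative to the one you use for $A$: when $j=+$, send each $z_w\in A$ to $y_w$ (as you do) but send each $z_w\in B$ to $x_w$. Then $|B'|=|B|$ exactly, and every edge of $D$ from $A$ into $z_w$ arises from an edge of $D_M$ into $x_w$, so the degree hypothesis transfers with no error and \ref{pseudr1}/\ref{pseudr2} give the conclusion directly.

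There is also a small inaccuracy in your \ref{pseud3} argument: it is not true that every edge of $D_M$ from $A'=\pi^{-1}(A)$ to $B'$ either ends in $S_M$ or lies in $H$. An edge leaving some $x_w$ with $z_w\in A$, or entering some $y_{w'}$ with $z_{w'}$ outside $A\cup N^+_D(A)$, need not be in $H$ yet still fails to produce an edge of $D$ after merging. This does not break the argument, since there are at most $O(|S_M|\,\Delta^\pm(D_M))=o(p_M|A'|n)$ such edges (again using $d_M=O(1)$ when $S_M\neq\emptyset$), and they are absorbed in your $o(p_M|A'|n)$ term. The paper sidesteps this too by taking $U'$ to be $A$ with each $z_v$ replaced by the single vertex $y_v$, and setting $V=V(D_M)\setminus(U'\cup N^+_{D_M-H}(U'))$; then $e^+_{D_M-H}(U',V)=0$ holds exactly, and the contradiction follows from two clean applications of \ref{pseudr3}.
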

\begin{proof} We will check the conditions \ref{pseud0}--\ref{pseud3}. Let $\bar{n}=|D|=n-2|S_M|=(1-o(1))n$, where we have used \ref{pseudr0}. Let $m=\bar{n}\log^{[3]}\bar{n}/d_M\log \bar{n}$, so that, for sufficiently large $n$,
 $m_M\geq m\geq m_M/2$. Note that, as there are no $S_M$-paths or $S_M$-cycles in $D_M$ by \ref{pseudr0}, every vertex in $V(D_M)\setminus S_M$ has at most 1 in- or out-neighbour in $S_M\cup \{x_v,y_v:v\in S_M\}$ in $D_M$. Thus, every vertex in $D$ has in- and out- degree at least $(1/2+\eps)\cdot 2d_M\log n-1\geq d_M\log n\geq d_M\log \bar{n}$ in $D$, for sufficiently large $n$.
Furthermore, by \ref{pseudr4}, $\Delta^{\pm}(D)\leq 10^6d_M\log \bar{n}$, and therefore \ref{pseud0} holds.

Now, suppose $A,B\subset V(D)$ are disjoint sets with $|A|\leq 2m\leq 4m_M$ and, for each $v\in A$, $d_D^+(v,B)\geq d_M\log^{[2]}\bar{n}/\log^{[4]}\bar{n}$. Let $A'\subset V(D_M)$ be formed from $A$ by replacing any vertex $z_v$, $v\in S_M$, with the vertex $y_v$, and let $B'\subset V(D_M)$ be formed from $B$ by replacing any vertex $z_v$, $v\in S_M$, with the vertex $x_v$. Then, for each $v\in A'$, we have $d^+_{D_M}(v,B')\geq d_M\lg{2}/4\lg{4}$, for sufficiently large $n$. Thus, using \ref{pseudr1}, we have $|B|=|B'|\geq 10|A'|=10|A|$.
Similar reasoning for $d^-_{D_M}(v,B')$, again using \ref{pseudr1}, completes the proof to show that \ref{pseud1} holds. Similarly, \ref{pseud2} follows from \ref{pseudr2}.

Therefore, it is left only to show that \ref{pseud3} holds. Suppose, for contradiction, there is some $U\subset V(D)$, with $|U|=m$, for which, without loss of generality $|N^+_{D}(U)|\leq (1/2+\eps/100)\bar{n}$. Let $U'$ be the set $U$ with any vertex $z_v$, $v\in S_M$, replaced by $y_v$, so that $|U'|=m$ and, using \ref{pseudr0},
\[
|N^+_{D_M-H}(U')|\leq |N^+_{D}(U)|+2|S_M|\leq (1/2+\eps/100)\bar{n}+2\sqrt{n}.
\]
Let $V=V(D_M)\setminus (U'\cup N^+_{D_M-H}(U'))$, so that
\begin{equation}\label{last}
|V|\geq n-m-(1/2+\eps/100)n-2\sqrt{n}\geq (1/2-\eps/50)n,
\end{equation}
for sufficiently large $n$. By \ref{pseudr3}, letting $p_M=M/n(n-1)$, we have
\begin{equation}\label{mission}
e^+_{D_M}(U',V)\geq (1-\eps/100 )p_M|U'||V|\overset{\eqref{last}}{\geq} (1-\eps/100)\cdot(1/2-\eps/50 )\cdot p_M|U'|n.
\end{equation}
On the other hand, by the choice of $V$, we have $e^+_{D_M-H}(U',V)=0$, so that
\begin{align}
e^+_{D_M}(U',V)&= e^+_{H}(U',V)= \sum_{u\in U'}d^+_H(u,V)\leq \sum_{u\in U'}(1/2-\eps)d^+_{D_M}(u)
\nonumber\\
&=(1/2-\eps)e^+_{D_M}(U',V(D))
\leq(1/2-\eps)\cdot (1+\eps/100)\cdot p_M|U'| n,\label{mission2}
\end{align}
where the last inequality follows by \ref{pseudr3}.

Thus, we have, by \eqref{mission} and \eqref{mission2},
\[
(1-\eps/100)\cdot (1/2-\eps/50 )\leq(1/2-\eps)\cdot (1+\eps/100),
\]
a contradiction. Therefore, \ref{pseud3} holds, completing the proof that $D$ is $(\eps/100,d)$-pseudorandom.
\end{proof}

Thus, by Theorem~\ref{pseudorandom}, for sufficiently large $n$, $D$ contains a directed Hamilton cycle, $C$ say. For each vertex $z_v$, $v\in S_M$, in $C$, replace $z_v$ by $x_vvy_v$. This gives a directed Hamilton cycle in $D_M-H$. Thus, $D_M-H$ is Hamiltonian, and therefore $D_M$ is $(1/2-\eps)$-resiliently Hamiltonian.
\end{proof}

\subsection{Non-resilience in the random digraph process}\label{secnonres}
To prove the non-resilience of our random digraph, we divide the vertices without low in- or out-degree into two sets independently at random with equal probability. By Lemma~\ref{divisionlemma}, there must be such a partition where each vertex has roughly an equal number of in- and out-neighbours in each set. Carefully dividing the low degree vertices between these sets, we reach a bipartition where deleting the edges across this partition does not remove substantially more than one half of the in- and out-neighbours around any one vertex. This gives us a limit for the resilience of Hamiltonicity in the digraph.

\begin{proof}[Proof of the non-resilience in Theorem~\ref{directedresilience}]
Almost surely, by Corollary~\ref{collect}, every digraph $D_M$ in the random digraph process with $\delta^\pm(D_M)\geq 1$ satisfies $M\geq n(\log n-\lg{2})$ and \ref{pseudr4}--\ref{pseudr2} with
\[
d_M=\frac{M}{2\cdot 10^3n\log n},\;\;\;\; m_M=\frac{n\lg{3}}{d_M\log n}\;\;\;\; and \;\;\;\; S_M=\{v:d^+_{D_M}(v)< 2d_M\log n\text{ or }d^-_{D_M}(v)< 2d_M\log n\}.
\]

Take the vertices in $V(D_M)\setminus S_M$ and partition them as $A\cup B$ so that each vertex is placed into $A$ or $B$ independently at random with probability $1/2$. We will show that the following claim holds for sufficiently large $n$.

\begin{claim}\label{divideoclaim}
With positive probability, the following holds.
\stepcounter{alabel}
\begin{enumerate}[label = \textbf{\Alph{alabel}}]
\item For each $v\in V(D_M)$ and $j\in \{+,-\}$, if $d^j_{D_M}(v)\geq 2d_M\log n$, then $d^j_{D_M}(v,A),d^j_{D_M}(v,B)\geq (1/2-\eps)d^j_{D_M}(v)$.\label{divideor}
\end{enumerate}
\end{claim}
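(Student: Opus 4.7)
The plan is to apply the (symmetric) Lov\'asz Local Lemma (Theorem~\ref{genlocal}). For each pair $(v,j)$ with $v\in V(D_M)$, $j\in\{+,-\}$ and $d^j_{D_M}(v)\geq 2d_M\log n$, let $E_{v,j}$ be the bad event that
\[
\min\{d^j_{D_M}(v,A),\,d^j_{D_M}(v,B)\}<(1/2-\eps)d^j_{D_M}(v).
\]
Property~\ref{divideor} is precisely the statement that no $E_{v,j}$ occurs, so it suffices to produce a partition in which this holds with positive probability.

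For the single-event bound, the key observation is that by~\ref{pseudr0} there are no $S_M$-paths of length $1$ or $2$, so every vertex of $D_M$ has at most one neighbour (in- or out-) lying in $S_M$. Hence, writing $N'=N^j_{D_M}(v)\setminus S_M$, we have $|N'|\geq d^j_{D_M}(v)-1$, and $d^j_{D_M}(v,A)$ is a sum of $|N'|$ independent Bernoulli$(1/2)$ random variables with mean within $1/2$ of $d^j_{D_M}(v)/2$. Since $M\geq n(\log n-\lg{2})$ we have $d_M=\Omega(1)$, so $d^j_{D_M}(v)\geq 2d_M\log n=\Omega(\log n)$, and the off-by-$1/2$ in the mean is negligible compared with the slack $\eps d^j_{D_M}(v)$. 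Applying Chernoff (Lemma~\ref{chernoff}) twice and a union bound gives
\[
\P(E_{v,j})\leq 4\exp(-\Omega(\eps^2 d^j_{D_M}(v)))\leq 4\exp(-\Omega_\eps(\log n))=n^{-\Omega_\eps(1)}.
\]

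For the dependency structure, $E_{v,j}$ is determined by the assignment of the vertices in $N'\subset N^j_{D_M}(v)$, so $E_{v,j}$ and $E_{v',j'}$ are mutually independent unless $v$ and $v'$ share a common $j$-/$j'$-neighbour in $V(D_M)\setminus S_M$. By the maximum degree bound~\ref{pseudr4}, the number of vertices at (undirected) graph-distance at most $2$ from any fixed vertex in $D_M$ is at most $O(\log^2 n)$, so each $E_{v,j}$ is independent of all but $O(\log^2 n)$ other events. The symmetric Local Lemma condition
\[
e\cdot n^{-\Omega_\eps(1)}\cdot O(\log^2 n)<1
\]
therefore holds for all sufficiently large $n$, and Theorem~\ref{genlocal} (applied with $x_{v,j}$ a common small constant) yields that with positive probability no $E_{v,j}$ occurs, proving the claim.

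Main obstacle: there is no real difficulty; this is a textbook Chernoff-plus-LLL argument. The only subtle point is the use of~\ref{pseudr0} to ensure that removing $S_M$ does not meaningfully bias the neighbourhood of $v$, so that Chernoff applies against the true expectation $d^j_{D_M}(v)/2$ rather than something that could drift by $\Omega(\eps d^j_{D_M}(v))$.
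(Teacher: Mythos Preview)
Your approach is the same as the paper's (Chernoff for the single-event bound, \ref{pseudr0} to control the loss from $S_M$, then the Local Lemma), but the execution contains a genuine error in the dependency step.

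You assert that the number of events depending on a given $E_{v,j}$ is $O(\log^2 n)$. This is only true when $M=O(n\log n)$. The claim must be proved for \emph{every} $D_M$ in the process with $\delta^\pm(D_M)\geq 1$, and $M$ can range all the way up to $n(n-1)$. By \ref{pseudr4} the maximum in/out-degree is $\leq 100M/n$, so the correct bound on the dependency degree is $O((M/n)^2)$, which may be as large as $\Theta(n^2)$. Your weakened probability bound $n^{-\Omega_\eps(1)}$ then does not suffice: for small $\eps$ the implicit exponent may be well below $2$, and the symmetric LLL inequality fails.

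The fix is exactly what the paper does: retain the $M$-dependence on both sides. Keep the Chernoff bound in the form $\P(E_{v,j})\leq 4\exp(-\Omega_\eps(d_M\log n))=4\exp(-\Omega_\eps(M/n))$, bound the dependency degree by $O((M/n)^2)$, and observe that $(M/n)^2\exp(-\Omega_\eps(M/n))=o(1)$ since $M/n\geq \log n-\lg{2}\to\infty$. With this correction your argument goes through and coincides with the paper's.
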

\begin{proof}[Proof of Claim~\ref{divideoclaim}]
For each $v\in V(D_M)$ and $j\in \{+,-\}$, let $E_{v,j}$ be the event that, if $d^j_{D_M}(v)\geq 2d_M\log n$, then either $d^j_{D_M}(v,A)<(1/2-\eps)d^j_{D_M}(v)$ or $d^j_{D_M}(v,B)< (1/2-\eps)d^j_{D_M}(v)$. Let $G$ be the dependence graph of these events and note that, by \ref{pseudr4}, $\Delta(G)\leq (\Delta^+(D_M)+\Delta^-(D_M))^2\leq 10^5M^2/n^2$.

Let $q=4\exp(-\eps^2d_M\log n/100)$, so that $q\cdot \Delta(G)=o(1)$.
By \ref{pseudr0}, $d^j_{D_M-S_M}(v)\geq d^j_{D_M}(v)-1\geq 2d_M\log n-1\geq d_M\log n$.
By Lemma~\ref{chernoff}, we then have
\[
\P(E_{v,j})\leq 2\exp\left(-\frac{\eps^2}{12}\cdot \frac{d^j_{D_M-S_M}(v)}{2}\right)\leq 2\exp\left(-\frac{\eps^2d_M\log n}{100}\right)=q/2\leq q(1-q\cdot\Delta(G))\leq q(1-q)^{\Delta(G)}.
\]
Thus, by Theorem~\ref{genlocal}, no event $E_{v,j}$ occurs with positive probability.
\end{proof}

Thus, by Claim~\ref{divideoclaim}, there is some partition $V(D_M)\setminus S_M=A\cup B$ such that \ref{divideor} holds. Note that $A,B\neq\emptyset$.  For each $v\in S_M$, by \ref{pseudr5}, there is $j_v$ such that $d^{j_v}_{D_M}(v)\geq 2d_M\log n$, and thus, by \ref{divideor}, $d^{j_v}_{D_M}(v,A),d^{j_v}_{D_M}(v,B)\geq (1/2-\eps)d^{j_v}_{D_M}(v)$. Let $i_v\in \{+,-\}$ such that $i_v\neq j_v$. By \ref{pseudr0}, as $v\in S_M$, there are no edges from $v$ to $S_M$ in either direction. Thus, there is some $X_v\in \{A,B\}$ such that $d^{i_v}_{D_M}(v,X_v)\geq d^{i_v}_{D_M}(v)/2$.

Let $A'=A\cup \{v\in S_M:X_v=A\}$ and $B'=B\cup \{v\in S_M:X_v=B\}$, and note that, for each $X\in \{A',B'\}$, $v\in X$ and $j\in \{+,-\}$,  $d^j_{D_M}(v,X)\geq (1/2-\eps)d^j_{D_M}(v)$.
Let $H$ be the bipartite digraph with vertex classes $A'$ and $B'$, with edges exactly those edges in $D_M$ between $A'$ and $B'$ in either direction. Thus, we have,
for each $v\in V(D_M)$ and $j\in \{+,-\}$, $d^j_H(v)\leq (1/2+\eps)d^j_{D_M}(v)$.

As $A,B\neq\emptyset$, $D_M-H$ is disconnected and hence not Hamiltonian. Therefore, $D_M$ is not $(1/2+\eps)$-resiliently Hamiltonian.
\end{proof}

\bibliographystyle{plain}
\bibliography{dirbib}

\end{document}